\theoremstyle{definition}
\newtheorem{thm}{Theorem}[section]
\newtheorem*{thm*}{Theorem}
\newtheorem{prop}[thm]{Proposition}
\newtheorem{lem}[thm]{Lemma}
\newtheorem{cor}[thm]{Corollary}
\newtheorem{defn}[thm]{Definition}
\newtheorem{remark}[thm]{Remark}
\newtheorem{terminology}[thm]{Terminology}
\newcommand{\R}{\mathbb{R}}  
\newcommand{\N}{\mathbb{N}}  
\newcommand{\rr}{\mathbb{R}^{2}_{<}} 
\title{Equivalence of Landscape and Erosion Distances for Persistence Diagrams}
\author{
  Cagatay Ayhan\thanks{Department of Mathematics, Florida State University}
  \and
  Tom Needham\footnotemark[1]
}
\date{}
\begin{document}

\hbadness=10000

\maketitle

\begin{abstract}
This paper establishes connections between three of the most prominent metrics used in the analysis of persistence diagrams in topological data analysis: the bottleneck distance, Patel's erosion distance, and Bubenik's landscape distance. Our main result shows that the erosion and landscape distances are equal, thereby bridging the former's natural category-theoretic interpretation with the latter's computationally convenient structure. The proof utilizes the category with a flow framework of de Silva et al., and leads to additional insights into the structure of persistence landscapes. Our equivalence result is applied to prove several results on the geometry of the erosion distance. We show that the erosion distance is not a length metric, and that its intrinsic metric is the bottleneck distance. We also show that the erosion distance does not coarsely embed into any Hilbert space, even when restricted to persistence diagrams arising from degree-0 persistent homology. Moreover, we show that erosion distance agrees with bottleneck distance on this subspace, so that our non-embeddability theorem generalizes several results in the recent literature. 
\end{abstract}

\section{Introduction}

Persistent homology is the most prominent tool in the field of Topological Data Analysis (TDA). In its classical form, persistent homology analyzes a one-parameter family of topological spaces (typically arising as a multiscale representation of a dataset) and outputs a topological summary known as a \emph{persistence diagram}~\cite{zomorodian2004computing,carlsson2007theory}. Persistence diagrams take the form of multisets of points $(b,d) \in \R^2$ with $b < d$, each of which represents the lifespan of a homological feature in the input family of topological spaces. The space of persistence diagrams, which we denote $\mathsf{PDgm}$, can be endowed with a variety of metrics, including the \emph{bottleneck distance}~\cite{Cohen-Steiner2007}, \emph{Wasserstein distances}~\cite{cohen2010lipschitz}, \emph{erosion distance}~\cite{patel2018generalized}, and various (pseudo)metrics arising from embedding diagrams into Hilbert or Banach spaces for machine learning tasks (see~\cite{chazal2014stochastic,bubenik2015statistical,adams2017persistence,chung2022persistence}, among many others). Within the TDA community, there has been significant recent interest in exploring properties of these metrics~\cite{bubenik2020embeddings,wagner2021nonembeddability,mitra2021space,perea2023approximating,che2024basic}. 

This paper is concerned with connections between three of the most widely used metrics on $\mathsf{PDgm}$. First, the \emph{bottleneck distance}~\cite{Cohen-Steiner2007} is considered to be the canonical metric on $\mathsf{PDgm}$, due in part to its stability to noise in the input data and its natural interpretation in the language of category theory. Moreover, the bottleneck distance can be computed in polynomial time via combinatorial optimization (see Remark \ref{rmk:bottleneck_distance_computation}). Second, the \emph{landscape distance}~\cite{bubenik2015statistical} arises via a certain embedding of $\mathsf{PDgm}$ into a Banach space, called the \emph{landscape map}. This leads to a very easy-to-compute metric which is more amenable to statistical applications than the bottleneck distance, by performing them in the ambient Banach space. However, it does not have an immediate category-theoretical interpretation, and the image of the Banach space embedding is not well understood. Finally, the \emph{erosion distance}~\cite{patel2018generalized} is an alternative metric on the space of persistence diagrams with category-theoretic origins. It provides a lower bound on bottleneck distance (hence enjoying the same stability to perturbations in the data), it has been suggested that it is efficient to compute (see the discussion surrounding \cite[Example 1]{xian2022capturing}), and it has found several uses in theory and applications, e.g.,~\cite{kim2021spatiotemporal,xin2023gril,kim2024interleaving,carriere2025sparsification}. 

Our main result shows that the latter two metrics discussed above are, in fact, the same:

\begin{thm*}[Theorem \ref{thm: d_E = Landscape Distance}]
    The landscape map is an isometry between the erosion distance and the landscape distance.
\end{thm*}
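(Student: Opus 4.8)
The plan is to route everything through the \emph{rank function}: for a diagram $D$, set $\beta_D(s,t) = \#\{(b,d)\in D : b\le s,\ t\le d\}$ for $s\le t$, the rank at $(s,t)$ of the persistence module associated to $D$, equivalently the number of points of $D$ lying weakly to the northwest of $(s,t)$. Both metrics in play are built from this object. On one hand, since the tent function of a point $(b,d)$ exceeds height $\varepsilon$ at parameter $u$ exactly when $b\le u-\varepsilon$ and $u+\varepsilon\le d$, the number of tents of $D$ at height $\ge\varepsilon$ over $u$ is $\beta_D(u-\varepsilon,\,u+\varepsilon)$; hence the $k$-th landscape is recovered as $\lambda_k^D(u) = \sup\{\varepsilon\ge0 : \beta_D(u-\varepsilon,\,u+\varepsilon)\ge k\}$, and conversely $\beta_D$ is recovered from the landscape by the inverse ``birth--death $\leftrightarrow$ midpoint--halfwidth'' rotation $u=\tfrac{s+t}{2},\ \varepsilon=\tfrac{t-s}{2}$, via $\beta_D(s,t)=\#\{k\ge1:\lambda_k^D(\tfrac{s+t}{2})\ge\tfrac{t-s}{2}\}$. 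In particular the landscape map is injective, because the rank function determines $D$. On the other hand, in Patel's rank-function formulation the erosion distance is the infimal $\varepsilon$ for which an $\varepsilon$-shrinking of the northwest region of each diagram has rank dominated by the un-shrunk rank of the other:
\[
 d_E(D,D')\;=\;\inf\bigl\{\varepsilon\ge0 :\ \beta_D(s-\varepsilon,\,t+\varepsilon)\le\beta_{D'}(s,t)\ \text{and}\ \beta_{D'}(s-\varepsilon,\,t+\varepsilon)\le\beta_D(s,t)\ \ \forall\, s\le t\bigr\}.
\]

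The heart of the argument is then to show that this $\varepsilon$-system is, under the rotation above, exactly the statement $\|\lambda^D-\lambda^{D'}\|_\infty\le\varepsilon$. For the forward direction, assume $\|\lambda^D-\lambda^{D'}\|_\infty\le\varepsilon$; given $s\le t$, put $u=\tfrac{s+t}{2}$, $\rho=\tfrac{t-s}{2}$ and $m=\beta_D(s-\varepsilon,\,t+\varepsilon)$, so $\lambda_m^D(u)\ge\rho+\varepsilon$ by the first formula, hence $\lambda_m^{D'}(u)\ge\rho$, hence $\beta_{D'}(s,t)\ge m$ by the inverse formula; symmetrically for the other inequality. For the reverse direction, assume the $\varepsilon$-system; fix $k$ and $u$, and (the case $\lambda_k^D(u)\le\varepsilon$ being trivial) note that for each $0\le\eta<\lambda_k^D(u)-\varepsilon$ one has $\beta_D(u-\eta-\varepsilon,\ u+\eta+\varepsilon)\ge k$ since $\beta_D$ is monotone in the halfwidth, so applying the $\varepsilon$-system with $s=u-\eta,\,t=u+\eta$ gives $\beta_{D'}(u-\eta,u+\eta)\ge k$, whence $\lambda_k^{D'}(u)\ge\lambda_k^D(u)-\varepsilon$; symmetrically. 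Since both directions are obtained for the \emph{same} $\varepsilon$, no ``the infimum is attained'' lemma is needed: applying the forward direction at $\varepsilon=\|\lambda^D-\lambda^{D'}\|_\infty$ gives $d_E(D,D')\le\|\lambda^D-\lambda^{D'}\|_\infty$, and applying the reverse direction at every $\varepsilon>d_E(D,D')$ gives $\|\lambda^D-\lambda^{D'}\|_\infty\le d_E(D,D')$. Hence $D\mapsto\lambda^D$ is an isometry onto its image.

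The conceptual content here is modest; the real work — and the place I expect trouble — is in making the two ``recovery'' formulas exact identities rather than identities-up-to-boundary. This means fixing conventions so that: the half-open versus strict-inequality choice in $\beta_D$ is consistent with the suprema defining the $\lambda_k$; the diagonal $s=t$ is handled correctly (there all tent functions vanish, the relevant $\sup$ is over the empty set, and $\beta_D(s,s)$ need not equal $\lim_{\varepsilon\downarrow0}\beta_D(s-\varepsilon,s+\varepsilon)$); and Patel's erosion condition is stated with the shifts oriented as above. Relatedly, one must check the semicontinuity needed for the two $\varepsilon$-systems to cut out exactly the same set of admissible $\varepsilon$, so that the two infima coincide on the nose. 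A cleaner way to organize all of this — and presumably the route taken through the category-with-a-flow formalism of de Silva et al. — is to present the rank functions as a poset-valued functor category equipped with the ``erosion'' flow and the landscapes as an $L^\infty$-type functor category with the translation flow, so that the two recovery formulas become a flow-preserving isomorphism of categories with a flow, and the isometry of the induced interleaving distances is then formal.
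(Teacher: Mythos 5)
Your route through rank functions is genuinely different from the paper's, which goes through the category-with-a-flow machinery: the paper (a) realizes $d_E$ as the interleaving distance of a coflow $\nabla$ on the poset $\mathsf{PDgm}$, (b) realizes $\|\cdot\|_\infty$ on landscapes as the interleaving distance of a truncation coflow $\Lambda$ on the poset $\mathsf{Land}$, and (c) shows the landscape map is a coflow-equivariant equivalence, then invokes a general isometry principle. Step (c) requires showing the landscape map is a \emph{bijection} onto landscape sequences, which in turn needs the Landscape Decomposition Theorem; that is a substantial preliminary that your approach entirely avoids, at the cost of not yielding the image characterization of the landscape map (Theorem \ref{thm: Every landscape sequence is a persistence landscape of some persistence diagram} / Corollary \ref{cor:image_of_landscape_map}) as a byproduct. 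Your closing paragraph correctly guesses the paper's organization.

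That said, there is a real gap, and it is exactly in the spot you anticipated ``trouble.'' With the convention $\beta_D(s,t) = \#\{(b,d): b\le s,\ t\le d\}$ the claimed inverse formula $\beta_D(s,t) = \#\{k : \lambda_k^D(\tfrac{s+t}{2}) \ge \tfrac{t-s}{2}\}$ is false along the diagonal, and this breaks your forward direction \emph{at} $\varepsilon = \|\lambda^D - \lambda^{D'}\|_\infty$, so the sentence ``no `the infimum is attained' lemma is needed'' is too optimistic. Concretely: take $D = \{(0,2)\}$ and $D' = \emptyset$, so $\|\lambda^D - \lambda^{D'}\|_\infty = 1$. Apply your forward argument at $\varepsilon = 1$ with $s=t=1$: $u=1$, $\rho = 0$, and $m = \beta_D(0,2) = 1$. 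Then $\lambda_1^D(1) = 1 \ge \rho+\varepsilon$ and $\lambda_1^{D'}(1) = 0 \ge \rho$, but $\beta_{D'}(1,1) = 0 < m$: the implication ``$\lambda_m^{D'}(u) \ge \rho \Rightarrow \beta_{D'}(s,t) \ge m$'' fails because $\#\{k : \lambda_k^{D'}(1) \ge 0\}$ is infinite, not $\beta_{D'}(1,1)$. The infima still agree here (the $\varepsilon$-system set is $(1,\infty)$ rather than $[1,\infty)$), so the theorem is safe, but your argument as stated is not. The clean fix is to adopt the paper's strict-on-death rank $\mathrm{rank}[Y](s,t) = \#\{(b,d): b\le s \le t < d\}$ together with the \emph{strict} inverse formula $\mathrm{rank}[Y](s,t) = \#\{k: \lambda_k^Y(\tfrac{s+t}{2}) > \tfrac{t-s}{2}\}$; with strict ranks, $\{h : \mathrm{rank}[Y](u-h,u+h)\ge m\}$ is a half-open interval $[0,\lambda_m^Y(u))$, and then $m = \mathrm{rank}[Y](u-\rho-\varepsilon, u+\rho+\varepsilon)$ forces $\lambda_m^Y(u) > \rho + \varepsilon$, whence $\lambda_m^{Y'}(u) > \rho$ strictly and $\mathrm{rank}[Y'](u-\rho,u+\rho)\ge m$ follows with no boundary case. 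Alternatively, keep your $\beta$ and argue via $\varepsilon' > \varepsilon$ and a limit. Either patch is routine but must be made.
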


To prove the theorem, we adopt the \emph{category with a flow} framework developed by de Silva et al.~in \cite{de2018theory}, which involves realizing the erosion and landscape distances in this category-theoretic framework. While a more direct proof of this fact is possible---for instance, it can be derived from the more general result \cite[Theorem 4.3]{kim2024interleaving} (see Remark \ref{rmk:KMS_result})---we chose our particular approach because it sheds additional light on the structure of persistence landscapes. Indeed, in preparation for the proof of the main theorem, we provide in Theorem \ref{thm: Every landscape sequence is a persistence landscape of some persistence diagram} a novel characterization of the image of the landscape map. This result, in turn, utilizes a new algebraic structure on the space of persistence landscapes (Theorem \ref{thm: Landscape Decomposition Theorem}). 

The main theorem directly leads to several new results on the metric geometry of the erosion distance and its connections to bottleneck distance. Our main contributions in this direction are as follows:

\begin{itemize}
    \item It is known that the erosion distance is not equal to the bottleneck distance, in general---in fact, we show in Proposition \ref{prop:bottleck_and_erosion_not_coarsely_equivalent} that they are not even coarsely equivalent. However, we prove that they are locally equivalent; in particular, they generate the same topology on $\mathsf{PDgm}$ (Theorem \ref{thm: bottleneck is erosion}). In fact, we show that the bottleneck distance is generated as the intrinsic (geodesic) metric associated to the erosion distance (Theorem \ref{thm:Bottleneck is intrinsic to erosion}). Accordingly, bottleneck distance can be seen as the intrinsic metric associated to landscape distance. This answers questions posed in  \cite[Example 3.15]{kim2024interleaving}, on whether the erosion distance is geodesic.
    \item We restrict our attention to a specific subspace of persistence diagrams, namely, \textbf{birth-zero diagrams}, denoted $\mathsf{PDgm}^{0}$ (persistence diagrams where each point is of the form $(0,d)$). We show that, on this subspace, the bottleneck and erosion (and hence the landscape) distances are equal (Theorem \ref{thm: d_B = d_E on PDgm^0}).
    \item There is a simple map taking $\mathsf{PDgm}^0$ into a Banach space; namely, the \emph{death-vectorization embedding}  $\mathrm{DV}: \mathsf{PDgm}^{0} \to \ell^{\infty}$, introduced by Patrangenaru et.~al.~\cite{patrangenaru2019challenges}, sorts the non-zero coordinates of the points in a diagram to obtain a finitely-supported, non-increasing sequence. As a corollary to Theorem \ref{thm: d_B = d_E on PDgm^0}, we prove that this map is bi-Lipschitz  with respect to the bottleneck/erosion/landscape distances (Corollary \ref{cor: Death vectorization map is bilipschitz}). This refines the main stability result of \cite{memoli2023ephemeral} (see Remark \ref{rmk:relation_to_memoli_zhou}).
    \item Finally, we leverage our results on $\mathsf{PDgm}^{0}$ to show that this space, endowed with bottleneck/erosion/landscape distance does not admit a coarse embedding into a Hilbert space. It follows that the full space $\mathsf{PDgm}$, endowed with any of the metrics we consider, also fails to admit such a coarse embedding (Theorem \ref{thm: space of persistence diagrams do not admit coarse embeddings}). Coarse embedding questions in TDA have seen quite a bit of activity in recent years, and our work generalizes the main results of~\cite{bubenik2020embeddings,mitra2021space}.
\end{itemize}

The structure of the paper is as follows. In Section \ref{Section: 2 Background}, we provide the reader with the required background material on TDA invariants and metrics. Section \ref{Section: 4 Erosion distance is landscape distance} is devoted to proving our main theorem, Theorem \ref{thm: d_E = Landscape Distance}, which says that erosion and landscape distances coincide. Our approach is to define categorical structures on the space of diagrams and on the space of landscapes, equip them with coflows, and show that the induced interleaving distances are equivalent, in the sense of \cite{de2018theory}. The main theorem is applied in Section \ref{section: Topological Results} to derive geometrical results on the erosion distance, where we shift focus on its connections to bottleneck distance and coarse embeddability questions.

\section{Background}\label{Section: 2 Background}

The focus of this paper is on connections between various invariants arising in the field of Topological Data Analysis (TDA). Throughout the paper, we assume that the reader is familiar with the basic concepts of TDA---see \cite{carlsson2014topological,dey2022computational} as general references. This section tersely recalls the main concepts of interest, and mostly serves to set  terminology and notation. With this terminology in place, we also recapitulate some of the main results of the paper in precise language.

\subsection{Topological Invariants of Data}

We begin by reviewing some of the main dataset descriptors studied in TDA.

\subsubsection{Persistence Modules} 

A \textbf{persistence module} is a functor 
\[
\mathcal{F}:(\mathbb{R}, \leq) \to \mathsf{vect}
\]
where $(\mathbb{R}, \leq)$ is the poset category of $\R$, and $\mathsf{vect}$ is the category of finite-dimensional vector spaces over a fixed field $\mathbb{F}$. This means that:
\begin{itemize}
    \item To each $r \in \mathbb{R}$, we assign a vector space $\mathcal{F}(r) \in \mathsf{vect}$.
    \item To each pair of real numbers $r \leq s$, we assign a linear map $\mathcal{F}(r \leq s): \mathcal{F}(r) \to \mathcal{F}(s)$.
\end{itemize}
satisfying the usual functorial properties: for any $r \in \mathbb{R}$, the map $\mathcal{F}(r \leq r)$ is the identity map $\mathcal{I}_{\mathcal{F}(r)}$ on $\mathcal{F}(r)$, and for any $r \leq s \leq t$, the composition rule $\mathcal{F}(r \leq t) = \mathcal{F}(s \leq t) \circ \mathcal{F}(r \leq s)$ holds.

In practice, persistence modules typically arise by applying the (degree-$k$) homology functor to a filtered simplicial complex; that is, to a functor from the poset category $(\R,\leq)$ into $\mathsf{SpCpx}$, the category of finite simplicial complexes with simplicial maps. For example, such a structure arises as the Vietoris-Rips complex of a finite metric space. A persistence module of this form is referred to as the \textbf{(degree-$k$) persistent homology} of the filtered simplicial complex.

As they are functors, the appropriate notion of a \textbf{morphism} between persistence modules $\mathcal{F}$ and $\mathcal{G}$ is a natural transformation. This amounts to a family of linear maps $(\varphi_r:\mathcal{F}(r) \to \mathcal{G}(r))_{r \in \R}$ such that, whenever $r \leq s$,
\[
\mathcal{G}(r \leq s) \circ \varphi_r = \varphi_s \circ \mathcal{F}(r \leq s).
\]
Then an \textbf{isomorphism} is a morphism $(\varphi_r)_r$ such that each $\varphi_r$ is an isomorphism of vector spaces. 

\subsubsection{Persistence Diagrams}\label{subsec:persistence_diagrams_background} 

A persistence module can be decomposed as a direct sum of 1-dimensional modules, each of which can be represented by a pair of (extended) real numbers $(b,d)$ with $-\infty \leq b < d \leq \infty$~\cite{crawley2015decomposition}. This decomposition gives rise to a \emph{persistence diagram}, or a multiset $\{(b_i,d_i)\}$ of pairs with $b_i < d_i$. 

Persistence diagrams are the main dataset invariant used in TDA. When a diagram arises as a representation of persistent homology, each point $(b,d)$ is interpreted as describing the ``birth" and ``death" scales of a homological feature in the filtered simplicial complex. 

In this paper, we make simplifying assumptions that persistence diagrams are \emph{finite} multisets, whose points satisfy $-\infty < b < d < \infty$. This is partially for convenience and partially for technical reasons---see Remark \ref{rem:finiteness} for details. In any case, this aligns with the structure of persistence diagrams in computational applications, where one necessarily works with finite collections and where points with $d=\infty$ are either ignored or truncated to some fixed, large but finite, value.

Let $\rr = \{ (b,d)\in \R \times  \R  \mid b < d \}$ denote the set of points in $\mathbb{R}^{2}$ above the diagonal. With the conventions described above, a \textbf{persistence diagram} in our setting is a multiset of the form  $Y = \{(b_i,d_i)\}_{i=1}^N$, such that $(b_i,d_i) \in \rr$ for each $i$. 

\subsubsection{Rank Functions}\label{subsec:rank_function_background}

Let $Y = \{(b_i,d_i)\}_{i=1}^N$ be a persistence diagram. The associated \textbf{rank function} is the map 
\[
\mathrm{rank}[Y]: \R^2_{\leq} \to \mathbb{Z},
\]
where $\R^2_{\leq} \coloneqq \{(b,d) \in \R^2 \mid b \leq d\}$, defined by 
\begin{equation}\label{eqn:rank_function}
\mathrm{rank}[Y](b,d) \coloneqq |\{(b_i,d_i) \in Y \mid b_i \leq b \leq d < d_i \}|.
\end{equation}
In the setting of persistent homology, this function captures the number of topological features which persist over the interval $(b,d)$. Indeed, if $Y$ arises as a decomposition of a persistence module $\mathcal{F}$, then the rank can also be expressed as 
\begin{equation}\label{eqn:rank_function_module}
\mathrm{rank}[Y](b,d) = \mathrm{rank}(\mathcal{F}(b \leq d)),
\end{equation}
where the latter is the rank in the usual sense of linear algebra.

\begin{terminology}\label{terminology:properly_contains}
    We say that $(b_i,d_i) \in Y$ \textbf{properly contains} $(b,d) \in \R^2_{\leq}$ if $b_i \leq b$ and $d< d_i$. With this terminology in place, $\mathrm{rank}[Y](b,d)$ is the number of pairs $(b_i,d_i) \in Y$ that properly contain $(b,d)$.  
\end{terminology}

\subsubsection{Persistence Landscapes}\label{subsec:persistence_landscapes_background}

Persistence landscapes, introduced by Bubenik in \cite{bubenik2015statistical}, are a powerful representation of persistence diagrams. This stable representation has been shown to be both machine-learning friendly and statistically accessible \cite{bubenik2015statistical, bubenik2020persistence}, as it provides a method to map persistence diagrams (which are somewhat unwieldy from a classical statistics perspective) to vectors in a Banach space. 

Let $Y$ be a persistence diagram. As defined by Bubenik in \cite{bubenik2015statistical}, the \textbf{persistence landscape} of $Y$ is the function $\lambda^{Y}:\N \times \R \to \R$ defined by 
\[
\lambda^{Y}(k,t) = \sup\{h \geq 0 \mid \, \mathrm{rank}[Y](t-h,t+h)\geq k\}.
\]
Alternatively, $\lambda^{Y}$ can be viewed as a sequence of functions $\lambda^{Y} = (\lambda^{Y}_1, \lambda^{Y}_2,\dots)$, where $\lambda^{Y}_k \coloneqq \lambda^Y(k,\cdot)$ is called the \textbf{$k$-th landscape function}. As each landscape function is bounded, and only finitely many of them are not identically zero, $\lambda^Y$ lies in the Banach space $L^\infty(\mathbb{N} \times \R)$. 

In \cite{bubenik2020persistence}, Bubenik gives an equivalent definition of persistence landscapes in terms of the birth-death pairs, which we state below. First, for any $(b,d)\in \rr$, define the associated \textbf{tent function} $f_{(b,d)}:\R \to \R$ by
\begin{equation}\label{eqn:tent_function}
    f_{(b,d)}(t) =
    \begin{cases}
        0, \text{ if } t \notin (b,d) \\
        t-b, \text{ if } t\in (b,\frac{b+d}{2}] \\
        d-t, \text{ if } t\in (\frac{b+d}{2},d).
    \end{cases}
\end{equation}
We then state the equivalent definition from \cite{bubenik2020persistence} as a proposition.

\begin{prop}[\cite{bubenik2020persistence}]\label{prop: lambda Y is just kmax of envelope functions}
    Let $Y = \{ (b_i,d_i) \}_{i=1}^N$ be a persistence diagram. The associated $k$-th landscape function can be expressed as
    \[
    \lambda^{Y}_k(t) = k\mathrm{max}\{f_{(b_i,d_i)}(t) \, \mid \, i = 1,2,\dots,N\}
    \]
    where $k\mathrm{max}$ stands for the $k$-th maximum element, which we declare to be $0$ for $k > N$.
\end{prop}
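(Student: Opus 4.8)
The plan is to rewrite the tent function in the more symmetric form
\[
f_{(b,d)}(t) = \max\{0,\ \min\{t-b,\ d-t\}\},
\]
which one verifies directly by splitting into the cases $t\le \tfrac{b+d}{2}$ and $t>\tfrac{b+d}{2}$, and then to translate every statement about the value $\lambda^Y_k(t)$ into a counting statement about the real numbers $f_{(b_i,d_i)}(t)$. The elementary observation driving this is that, for a fixed $t$ and any $h>0$,
\[
f_{(b_i,d_i)}(t)\ge h \iff b_i\le t-h \ \text{and}\ t+h\le d_i,
\qquad
f_{(b_i,d_i)}(t)> h \iff b_i< t-h \ \text{and}\ t+h< d_i ,
\]
because $f_{(b_i,d_i)}(t)\ge h>0$ forces $t\in(b_i,d_i)$, so that $f_{(b_i,d_i)}(t)=\min\{t-b_i,\,d_i-t\}$ there.

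Next I would compare these conditions with the defining formula for the rank function, which (for $h\ge 0$) counts exactly the indices $i$ with $b_i\le t-h$ and $t+h< d_i$. From the displayed equivalences, every index contributing to $\{i : f_{(b_i,d_i)}(t)>h\}$ also contributes to $\mathrm{rank}[Y](t-h,t+h)$, and every index contributing to $\mathrm{rank}[Y](t-h,t+h)$ also contributes to $\{i : f_{(b_i,d_i)}(t)\ge h\}$. Hence, for every $h>0$,
\[
\bigl|\{\, i : f_{(b_i,d_i)}(t) > h \,\}\bigr|
\ \le\ \mathrm{rank}[Y](t-h,t+h)
\ \le\ \bigl|\{\, i : f_{(b_i,d_i)}(t) \ge h \,\}\bigr| .
\]
Now write $s_1\ge s_2\ge\dots\ge s_N$ for the values $f_{(b_1,d_1)}(t),\dots,f_{(b_N,d_N)}(t)$ arranged in non-increasing order, so that $k\mathrm{max}\{f_{(b_i,d_i)}(t)\}=s_k$ with the convention $s_k=0$ for $k>N$. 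Then $|\{i : f_{(b_i,d_i)}(t)\ge h\}|\ge k$ iff $s_k\ge h$, and $|\{i : f_{(b_i,d_i)}(t)>h\}|\ge k$ iff $s_k>h$. Feeding this into the squeeze: for $0<h<s_k$ we obtain $\mathrm{rank}[Y](t-h,t+h)\ge k$, while for $h>s_k$ we obtain $\mathrm{rank}[Y](t-h,t+h)<k$. Thus the set $\{\,h\ge 0 : \mathrm{rank}[Y](t-h,t+h)\ge k\,\}$ contains $(0,s_k)$ and is contained in $[0,s_k]$, so its supremum equals $s_k$, giving $\lambda^Y_k(t)=s_k=k\mathrm{max}\{f_{(b_i,d_i)}(t)\mid i=1,\dots,N\}$. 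The degenerate case $s_k=0$ (which subsumes $k>N$) is dispatched separately: the same squeeze forces $\mathrm{rank}[Y](t-h,t+h)<k$ for all $h>0$, so the defining set is contained in $\{0\}$ and $\lambda^Y_k(t)=0=s_k$ under the usual convention $\sup\emptyset=0$.

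The one genuine subtlety — and the step I expect to need the most care — is the boundary mismatch between the rank function, whose defining chain $b_i\le t-h\le t+h< d_i$ mixes a non-strict with a strict inequality, and the tent-function sub-level (all non-strict) and strict-sub-level (all strict) conditions. This mismatch is precisely why one is forced to use the two-sided squeeze above rather than a single clean identity: the two bounding counts can legitimately disagree at the single threshold $h=s_k$, but that single value of $h$ is invisible to the supremum, so the argument still closes. The remaining work — the midpoint case check for the reformulation of $f_{(b,d)}$ and the handling of the empty-set / $k>N$ conventions — is routine.
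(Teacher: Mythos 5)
The paper itself does not prove this proposition; it is quoted verbatim from Bubenik's work with a citation, so there is no internal proof to compare against. Your proof is correct and complete. The reformulation $f_{(b,d)}(t)=\max\{0,\min\{t-b,d-t\}\}$ is easily checked against the paper's piecewise definition \eqref{eqn:tent_function} (including at the endpoints $t=b,\ t=d,\ t=\tfrac{b+d}{2}$), and the two stated equivalences for $f_{(b_i,d_i)}(t)\ge h$ and $f_{(b_i,d_i)}(t)>h$ (for $h>0$) follow directly from it. You correctly identify the boundary mismatch in the rank function \eqref{eqn:rank_function}, where $b_i\le t-h$ is non-strict while $t+h<d_i$ is strict, and the two-sided squeeze
\[
\bigl|\{i : f_{(b_i,d_i)}(t)>h\}\bigr|\ \le\ \mathrm{rank}[Y](t-h,t+h)\ \le\ \bigl|\{i : f_{(b_i,d_i)}(t)\ge h\}\bigr|
\]
is exactly the right way to route around it: the threshold $h=s_k$ is where the bounds can disagree, but a single value is harmless when the final step is a supremum. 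The translation $|\{i:f\ge h\}|\ge k\iff s_k\ge h$ and $|\{i:f>h\}|\ge k\iff s_k>h$ is elementary and correct, and the conclusion $\sup\{h\ge 0:\mathrm{rank}[Y](t-h,t+h)\ge k\}=s_k$ follows, including the $s_k=0$ and $k>N$ cases under the convention $\sup\emptyset=0$. This is a clean, self-contained argument that supplies a proof the paper only cites.
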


One of the main contributions of this paper is an exact characterization of the image of the map $Y \mapsto \lambda^Y$ taking a diagram to its landscape---see Theorem \ref{thm: Every landscape sequence is a persistence landscape of some persistence diagram} and Corollary \ref{cor:image_of_landscape_map}. 

In Figure \ref{fig:persistence_diagram_rank_landscape}, we present an example of a persistence diagram $Y$, along with values of its rank function $\mathrm{rank}[Y]$, and its persistence landscape $\lambda^Y$. Note that, according to Theorem \ref{thm: Landscape Decomposition Theorem}, $\lambda^Y$ can be expressed as
\[
\lambda^Y = f_{(1,7)} \oplus f_{(3,8)} \oplus f_{(2,5)} \oplus f_{(2,5)} \oplus f_{(9,10)}
\]
where $\oplus$ denotes the direct sum operation defined in Definition \ref{defn: Direct sum operation}.

\begin{figure}
    \centering
    \includegraphics[width=1\linewidth]{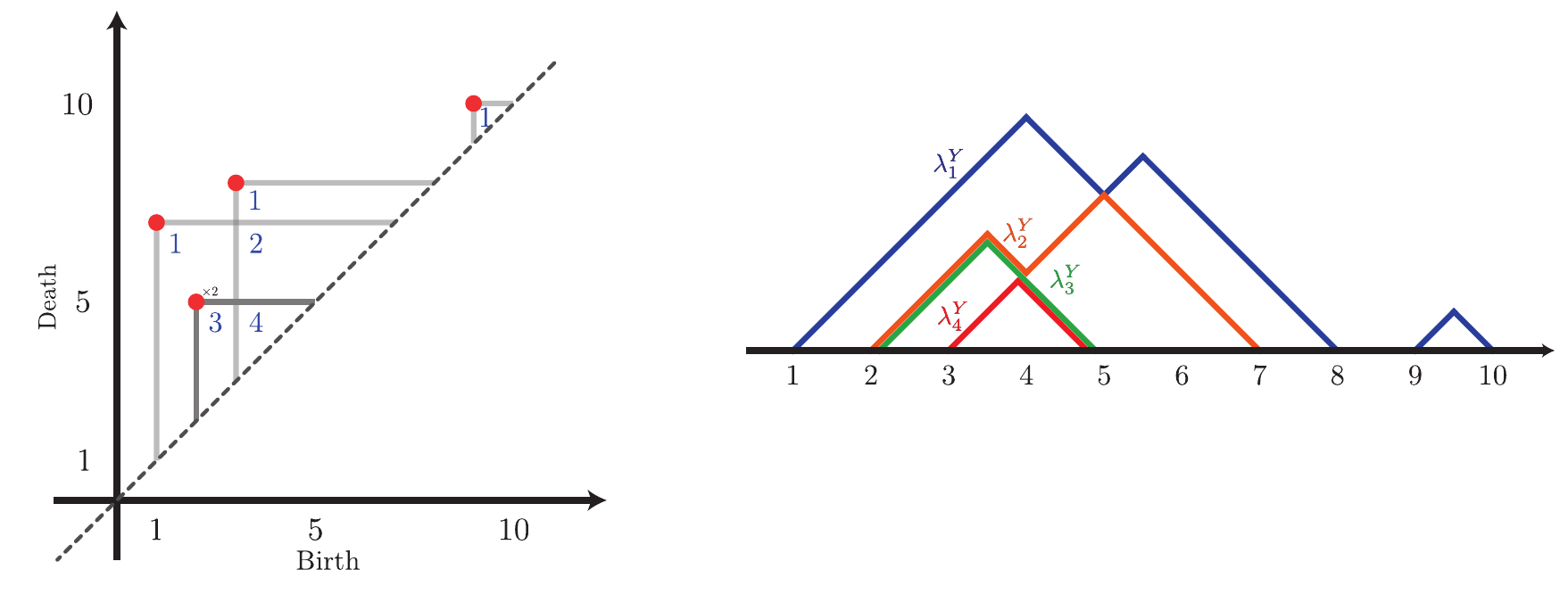}
    \caption{\textbf{Left}: An example persistence diagram $Y = \{ (1,7),(3,8),(2,5),(2,5),(9,10) \}$. The light-gray lines extending horizontally and vertically from each birth-death pair reveal the formation of the persistence landscape. Moreover, the values of $\mathrm{rank}[Y]$ are annotated in blue, associated with each region formed by the underlying landscape. \textbf{Right}: The corresponding persistence landscape $\lambda^Y$ which forms a non-increasing sequence of piecewise-linear functions $\lambda^Y_1 \geq \lambda^Y_2 \geq \lambda^Y_3 \geq \lambda^Y_4$ (and $\lambda^Y_k =0$ for $k \geq 5$), as indicated by different colors.
    }
\label{fig:persistence_diagram_rank_landscape}
\end{figure}

\subsection{Standard Metrics in TDA}

We now overview some metrics used to compare the TDA invariants described in the previous subsection. 

\subsubsection{Bottleneck Distance}\label{subsec:bottleneck_distance}

Let $Y = \{(b_i, d_i)\}_{i=1}^N$ and $Y' = \{(b_i', d_i')\}_{i=1}^{N'}$ be two persistence diagrams. The \textbf{bottleneck distance}~\cite{Cohen-Steiner2007} between $Y$ and $Y'$ is given by
\[
d_B(Y, Y') \coloneqq \min_{\phi: S \to S'} \max \left\{ \max_{(b, d) \in S} d_\infty((b, d), \phi(b, d)), \; \max_{(b, d) \notin S} \frac{(d - b)}{2}, \; \max_{(b', d') \notin S'} \frac{(d' - b')}{2} \right\}
\]
where
\begin{itemize}
    \item $\phi: S \to S'$ is a  bijection with $S \subseteq Y$ and $S' \subseteq Y'$,
    \item $d_\infty((b, d), (b', d')) \coloneqq \max \{ |b - b'|, |d - d'| \}$.
\end{itemize}

Intuitively, $d_B(Y,Y')$ finds a matching between the topological features corresponding to points in the diagrams, which is optimal with respect to the $\ell^\infty$-type cost. The bottleneck distance is known to be stable to perturbations in the data from which the diagrams are derived~\cite{cohen2005stability}, and its computation is a tractable optimization problem; these properties, in part, lead to its status as the de facto canonical metric on the space of persistence diagrams.

\subsubsection{Interleaving Distance}\label{sec:interleaving_distance}

Loosely speaking, one can define a metric on the space of persistence modules by quantifying the degree to which any pair  is non-isomorphic; this motivates the concept of \emph{interleaving distance}, as introduced by Chazal et al.~\cite{chazal2009proximity}, which we now recall. For $\varepsilon\geq 0 $, two persistence modules $\mathcal{F}$ and $\mathcal{G}$ are said to be \textbf{$\varepsilon$-interleaved} if, for all $r\in \R$, there are linear maps $\varphi_{r}: \mathcal{F}(r) \to \mathcal{G}(r + \varepsilon)$ and $\psi_{r}: \mathcal{G}(r) \to \mathcal{F}(r + \varepsilon)$ such that the following diagrams commute for any $r \leq s$:

\[\begin{tikzcd}[ampersand replacement=\&,cramped, column sep=small]
	{\mathcal{F}(r)} \&\& {\mathcal{F}(s)} \&\&\& {\mathcal{F}(r + \varepsilon)} \&\& {\mathcal{F}(s+\varepsilon)} \\
	\& {\mathcal{G}(r+\varepsilon)} \&\& {\mathcal{G}(s+\varepsilon)} \& {\mathcal{G}(r)} \&\& {\mathcal{G}(s)} \\
	\& {\mathcal{F}(r)} \&\& {\mathcal{F}(r + 2\varepsilon)} \&\& {\mathcal{F}(r + \varepsilon)} \\
	\&\& {\mathcal{G}(r + \varepsilon)} \&\& {\mathcal{G}(r)} \&\& {\mathcal{G}(r + 2\varepsilon)}
	\arrow["{\mathcal{F}(r\leq s)}", from=1-1, to=1-3]
	\arrow["{\varphi_r}", from=1-1, to=2-2]
	\arrow["{\varphi_s}", from=1-3, to=2-4]
	\arrow["{\mathcal{F}(r + \varepsilon \leq s+\varepsilon)}", from=1-6, to=1-8]
	\arrow["{\mathcal{G}(r+\varepsilon \leq s +\varepsilon)}", from=2-2, to=2-4]
	\arrow["{\psi_r}", from=2-5, to=1-6]
	\arrow["{\mathcal{G}(r \leq s)}", from=2-5, to=2-7]
	\arrow["{\psi_s}", from=2-7, to=1-8]
	\arrow["{\mathcal{F}(r \leq r + 2\varepsilon)}", from=3-2, to=3-4]
	\arrow["{\varphi_r}"', from=3-2, to=4-3]
	\arrow["{\varphi_{r + \varepsilon}}", from=3-6, to=4-7]
	\arrow["{\psi_{r + \varepsilon}}"', from=4-3, to=3-4]
	\arrow["{\psi_r}", from=4-5, to=3-6]
	\arrow["{\mathcal{G}(r \leq r + 2\varepsilon)}", from=4-5, to=4-7]
\end{tikzcd}\]
The \textbf{interleaving distance} between $\mathcal{F}$ and $\mathcal{G}$ is then defined to be
\[
d_I(\mathcal{F},\mathcal{G}) = \inf\{\varepsilon \geq 0 \mid \mbox{$\mathcal{F}$ and $\mathcal{G}$ are $\varepsilon$-interleaved}\}.
\]

In general, the interleaving distance defines an extended pseudometric on the space of persistence modules. In fact, a fundamental theorem in TDA, referred to as the \emph{isometry theorem}~\cite{chazal2009proximity,lesnick2015theory}, says that the interleaving distance between persistence modules is equal to the bottleneck distance between their persistence diagram representations. 

The notion of interleaving distance has been extended over the years to define metrics for increasingly general structures \cite{Bubenik2014, Bubenik2015, de2018theory,stefanou2018dynamics,scoccola2020locally,mcfaddin2023interleaving}. An appropriate level of generality for our purposes is provided by the \emph{category with a flow framework} of de Silva et al.~\cite{de2018theory}, as we explain below in Section \ref{sec:coflows}.

\subsubsection{Erosion Distance}\label{subsec:erosion_distance_background}

Due to its stability properties and its connection to the interleaving distance (as described above), the bottleneck distance $d_B$ is considered to be the canonical metric on the space of persistence diagrams. However, the \emph{erosion distance}, introduced by Patel in \cite{patel2018generalized}, is also motivated by category theory and enjoys stability to perturbations in the data. We present a definition from \cite{xian2022capturing} here, and give Patel's (equivalent) definition below in Section \ref{subsec:erosion_distance}. The \textbf{erosion distance} between two persistence diagrams $Y$ and $Y'$ is given by
\begin{equation}\label{eqn:erosion_distance_initial}
\begin{split}
   d_E(Y,Y') \coloneqq \inf \{\varepsilon \geq 0 \mid \mathrm{rank}&[Y](b-\varepsilon,d+\varepsilon) \leq \mathrm{rank}[Y'](b,d) \\
   &\mbox{ and } \mathrm{rank}[Y'](b-\varepsilon,d+\varepsilon) \leq \mathrm{rank}[Y](b,d) \mbox{ for all } b\leq d\}. 
\end{split}
\end{equation}

It is not hard to show that $d_E \leq d_B$, nor is it hard to construct an example which illustrates that this is not an equality, in general (see, e.g.,  \cite[Example 1]{xian2022capturing}). However, we prove in Theorem \ref{thm: bottleneck is erosion} that these metrics are locally equivalent, and therefore induce the same topology on the space of diagrams. Moreover, we show in Theorem \ref{thm:Bottleneck is intrinsic to erosion} that the bottleneck distance is the intrinsic distance induced by the erosion distance.

\subsubsection{Landscape Distance}\label{subsec:landscape_distance}

As was observed in Section \ref{subsec:persistence_landscapes_background}, the persistence landscape $\lambda^Y$ of a diagram $Y$ lies in the Banach space $L^\infty(\mathbb{N} \times \mathbb{R})$. Landscapes are then naturally compared by the metric induced by the norm, referred to as the \textbf{landscape distance}:
\begin{equation}\label{eqn:landscape_distance_background}
\|\lambda^Y - \lambda^{Y'}\|_\infty \coloneqq \sup_{k,t} |\lambda^Y_k(t) - \lambda^{Y'}_k(t)|.
\end{equation}

The main result of this paper, Theorem \ref{thm: d_E = Landscape Distance}, shows that the landscape map $Y \mapsto \lambda^Y$ is an isometry with respect to the erosion and landscape distances. This result is leveraged to prove that the space of diagrams, endowed with erosion distance, does not coarsely embed into a Hilbert space (Theorem \ref{thm: space of persistence diagrams do not admit coarse embeddings}).

\subsection{Coflows on Poset Categories}\label{sec:coflows}

We will make heavy use of the \emph{category with a flow} framework of \cite{de2018theory}, which extends the interleaving distance construction (described in Section \ref{sec:interleaving_distance}) to define a very general class of metrics, presented in the language of category theory. For our purposes, it suffices to specialize the framework to the setting of poset categories. Moreover, \cite{de2018theory} distinguishes between \emph{lax} and \emph{strict} versions of the framework, whereas we only require the strict version. Finally, it turns out to be more convenient to use a related construction called a \emph{coflow}, as introduced in \cite{cruz2019metric}. For the sake of simplicity, we opt to present the framework specifically in the context of strict coflows on poset categories. For the rest of the paper, we frequently write $\mathsf{P} = (\mathsf{P},\leq)$ for a poset category---that is, we abuse notation and use $\mathsf{P}$ to denote both the category and the underlying set. We note that the following definitions could alternatively be expressed in the language of \emph{superlinear families of translations}, coined in~\cite{Bubenik2015}.

\begin{defn} \label{def: Coflow}
    Let $\mathsf{P} = (\mathsf{P},\leq)$ be a poset category. A family $\gamma = (\gamma_{\varepsilon})_{\varepsilon\geq 0}$ of endofunctors $\gamma_{\varepsilon}:\mathsf{P}\to \mathsf{P}$ is called a \textbf{coflow} on $\mathsf{P}$ if
    \begin{enumerate}
        \item $\gamma_{0} = \mathcal{I}_{\mathsf{P}}$, where $\mathcal{I}_{\mathsf{P}}$ is the identity endofunctor on $\mathsf{P}$;
        \item for all $\varepsilon \geq 0$ and $x \in \mathsf{P}$, $\gamma_\varepsilon(x) \leq x$; 
        \item $\gamma_{\varepsilon_{1} + \varepsilon_{2}} = \gamma_{\varepsilon_1} \circ \gamma_{\varepsilon_2}$ for all $\varepsilon_1,\varepsilon_2 \geq 0$.
    \end{enumerate}
    We will occasionally denote (by further abuse of notation) a poset category with a coflow as a triplet $\mathsf{P} = (\mathsf{P}, \leq, \gamma)$. In the following, for $x \in \mathsf{P}$ and $\varepsilon, \varepsilon_1,\varepsilon_2 \geq 0$, we write $\gamma_\varepsilon x$ in place of $\gamma_\varepsilon(x)$, and $\gamma_{\varepsilon_1}\gamma_{\varepsilon_2}$ in place of $\gamma_{\varepsilon_1} \circ \gamma_{\varepsilon_2}$.

    For $\varepsilon \geq 0$, we say that $x,y \in \mathsf{P}$ are \textbf{$\varepsilon$-interleaved (with respect to $\gamma$)} if $\gamma_\varepsilon x \leq y$ and $\gamma_\varepsilon y \leq x$. The \textbf{interleaving distance (associated to $\gamma$)} between $x$ and $y$, denoted $d_{\mathsf{P}}^{\gamma}(x,y)$, is the infimum of all $\varepsilon \geq 0$ such that $x$ and $y$ are $\varepsilon$-interleaved. That is,
    \[
     d_{\mathsf{P}}^{\gamma}(x,y) \coloneqq \inf
        \{
        \varepsilon \geq 0 \, \mid \, \gamma_{\varepsilon
        }x \leq y  \text{ and }  \gamma_{\varepsilon
        }y \leq x
        \}.
    \]
\end{defn}

\begin{remark}\label{rem:flows_and_coflows}
    A coflow on $(\mathsf{P},\leq)$ is a flow (in the language of \cite{de2018theory}) on the opposite poset category $(\mathsf{P},\leq_{\mathrm{op}})$, where $x \leq_\mathrm{op} y$ if and only if $x \geq y$.
\end{remark}

In light of Remark \ref{rem:flows_and_coflows}, the general result \cite[Theorem 2.7]{de2018theory} on interleaving distances associated to flows immediately implies the following.

\begin{prop}
    The interleaving distance $d_\mathsf{P}^\gamma$ associated to a coflow $\gamma$ defines an extended pseudometric on $\mathsf{P}$.
\end{prop}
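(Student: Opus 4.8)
The plan is to deduce the statement directly from Remark \ref{rem:flows_and_coflows} together with the cited result. By that remark, a coflow $\gamma$ on $(\mathsf{P},\leq)$ is a flow on the opposite poset category $(\mathsf{P},\leq_{\mathrm{op}})$, and unwinding Definition \ref{def: Coflow} shows that $x,y$ being $\varepsilon$-interleaved with respect to $\gamma$ (i.e.\ $\gamma_\varepsilon x \leq y$ and $\gamma_\varepsilon y \leq x$) is exactly the condition that $x$ and $y$ are $\varepsilon$-interleaved as objects of $(\mathsf{P},\leq_{\mathrm{op}})$ under the associated flow. Hence $d_\mathsf{P}^\gamma$ coincides with the interleaving distance of the flow on $(\mathsf{P},\leq_{\mathrm{op}})$, which is an extended pseudometric by \cite[Theorem 2.7]{de2018theory}. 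This is the one-line argument the excerpt is pointing toward.

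Since the three pseudometric axioms are short in the poset setting, I would also sketch a direct verification as a sanity check. Non-negativity is immediate from the constraint $\varepsilon \geq 0$ in the infimum. For $d_\mathsf{P}^\gamma(x,x) = 0$, axiom (1) of Definition \ref{def: Coflow} gives $\gamma_0 x = x \leq x$, so $0$ lies in the set over which the infimum is taken. Symmetry is built into the definition, as the pair of conditions $\gamma_\varepsilon x \leq y$ and $\gamma_\varepsilon y \leq x$ is symmetric in $x$ and $y$. (Here $d_\mathsf{P}^\gamma$ is genuinely only a pseudometric: one does not attempt to show $d_\mathsf{P}^\gamma(x,y)=0 \Rightarrow x=y$, and the infimum is over a possibly empty set, which is why the value $+\infty$ is allowed.)

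The only substantive point is the triangle inequality, and this is where the coflow structure is actually used. Suppose $x,y$ are $\varepsilon$-interleaved and $y,z$ are $\delta$-interleaved; if either interleaving distance is infinite the inequality is vacuous, so assume both are finite. From $\gamma_\varepsilon x \leq y$ I apply the endofunctor $\gamma_\delta$: an endofunctor of a poset category is precisely an order-preserving self-map, so $\gamma_\delta \gamma_\varepsilon x \leq \gamma_\delta y$. By axiom (3), $\gamma_\delta \gamma_\varepsilon = \gamma_{\varepsilon+\delta}$, and since $\gamma_\delta y \leq z$ by hypothesis, we get $\gamma_{\varepsilon+\delta} x \leq z$; the symmetric computation gives $\gamma_{\varepsilon+\delta} z \leq x$. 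Thus $x$ and $z$ are $(\varepsilon+\delta)$-interleaved, and taking infima over $\varepsilon$ and $\delta$ yields $d_\mathsf{P}^\gamma(x,z) \leq d_\mathsf{P}^\gamma(x,y) + d_\mathsf{P}^\gamma(y,z)$. I expect no real obstacle; the only subtlety is recognizing that applying a functor on a poset preserves the order relation, which is exactly what converts the composition law of the coflow into additivity of interleaving parameters. (Axiom (2) plays no role in the pseudometric axioms themselves, only in ensuring $\gamma_\varepsilon x \leq x$.)
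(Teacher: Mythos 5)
Your proposal is correct and its first paragraph is exactly the paper's argument: the paper proves the proposition simply by invoking Remark \ref{rem:flows_and_coflows} to identify a coflow with a flow on the opposite poset and then citing \cite[Theorem 2.7]{de2018theory}. The additional direct verification you sketch is also sound (in particular, the triangle-inequality step correctly uses functoriality of $\gamma_\delta$ as an order-preserving map together with $\gamma_\delta\gamma_\varepsilon=\gamma_{\varepsilon+\delta}$), though the paper does not carry it out.
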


Posets with coflows are related by the following construction.

\begin{defn}\label{def:coflow_equivariant}
    Let $(\mathsf{C},\leq_{\mathsf{C}},\tau)$ and $(\mathsf{D},\leq_{\mathsf{D}},\gamma)$ be two poset categories with coflows. A functor $\mathcal{H}: \mathsf{C} \to \mathsf{D}$ is called \textbf{coflow equivariant} if $\mathcal{H}\tau_{\varepsilon} = \gamma_{\varepsilon}
    \mathcal{H}$. If, in addition, $\mathcal{H}$ is an equivalence of categories, then $\mathcal{H}$ is called a \textbf{coflow equivariant equivalence} and we say that $\mathsf{C}$ and $\mathsf{D}$ are \textbf{coflow equivalent}. 
\end{defn}

A coflow equivariant equivalence defines an isometry of the associated interleaving distances. This is stated precisely in the following result, whose proof is straightforward; it also follows directly from the more general result \cite[Theorem 4.3]{de2018theory}.

\begin{prop} \label{prop: Coflow induced interleaving distances between coflow equivalent poset categories are preserved}
    Let $(\mathsf{C},\leq_{\mathsf{C}},\tau)$ and $(\mathsf{D},\leq_{\mathsf{D}},\gamma)$ be two poset categories with coflows and let $\mathcal{H}:\mathsf{C} \to \mathsf{D}$ be a coflow equivariant equivalence. Then, for any $a,b\in \mathsf{C}$, we have $d_{\mathsf{C}}^{\tau}(a,b) = d_{\mathsf{D}}^{\gamma}(\mathcal{H}(a),\mathcal{H}(b))$.
\end{prop}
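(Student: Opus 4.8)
The plan is to reduce the statement to an equality of the two sets of interleaving parameters: for every $\varepsilon \geq 0$ I would show that $a$ and $b$ are $\varepsilon$-interleaved with respect to $\tau$ if and only if $\mathcal{H}(a)$ and $\mathcal{H}(b)$ are $\varepsilon$-interleaved with respect to $\gamma$. Once this is established, the claimed identity $d_{\mathsf{C}}^{\tau}(a,b) = d_{\mathsf{D}}^{\gamma}(\mathcal{H}(a),\mathcal{H}(b))$ is immediate, since both sides are by Definition \ref{def: Coflow} the infimum of the (common) set of such $\varepsilon$.

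First I would record the two order-theoretic properties of $\mathcal{H}$ that the argument uses. Because $\mathsf{C}$ and $\mathsf{D}$ are poset (thin) categories, $\mathcal{H}$ being a functor means it is \emph{order-preserving}: a morphism $x \to y$ in $\mathsf{C}$ (i.e., $x \leq_{\mathsf{C}} y$) is sent to a morphism $\mathcal{H}(x) \to \mathcal{H}(y)$, so $\mathcal{H}(x) \leq_{\mathsf{D}} \mathcal{H}(y)$. Because $\mathcal{H}$ is in addition full, it is \emph{order-reflecting}: if $\mathcal{H}(x) \leq_{\mathsf{D}} \mathcal{H}(y)$ then the corresponding morphism in $\mathsf{D}$ is $\mathcal{H}$ of a morphism $x \to y$, whence $x \leq_{\mathsf{C}} y$. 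Only fullness of the equivalence is needed here; essential surjectivity and faithfulness play no role in the distance comparison.

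For the forward direction, suppose $\tau_{\varepsilon} a \leq_{\mathsf{C}} b$ and $\tau_{\varepsilon} b \leq_{\mathsf{C}} a$. Applying $\mathcal{H}$, using order-preservation and then coflow equivariance $\mathcal{H}\tau_{\varepsilon} = \gamma_{\varepsilon}\mathcal{H}$, I get $\gamma_{\varepsilon}\mathcal{H}(a) = \mathcal{H}(\tau_{\varepsilon} a) \leq_{\mathsf{D}} \mathcal{H}(b)$ and, symmetrically, $\gamma_{\varepsilon}\mathcal{H}(b) \leq_{\mathsf{D}} \mathcal{H}(a)$, i.e., $\mathcal{H}(a)$ and $\mathcal{H}(b)$ are $\varepsilon$-interleaved with respect to $\gamma$. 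For the converse, starting from $\gamma_{\varepsilon}\mathcal{H}(a) \leq_{\mathsf{D}} \mathcal{H}(b)$, I rewrite the left-hand side as $\mathcal{H}(\tau_{\varepsilon} a)$ via equivariance and then apply order-reflection to obtain $\tau_{\varepsilon} a \leq_{\mathsf{C}} b$; the inequality $\tau_{\varepsilon} b \leq_{\mathsf{C}} a$ follows in the same way.

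The only point that requires genuine attention—and hence the main (though minor) obstacle—is the order-reflection step, which is exactly where fullness of the equivalence is indispensable: without it one recovers only the inequality $d_{\mathsf{D}}^{\gamma}(\mathcal{H}(a),\mathcal{H}(b)) \leq d_{\mathsf{C}}^{\tau}(a,b)$. Everything else is routine unwinding of the definitions of coflow, $\varepsilon$-interleaving, and coflow equivariance. I would conclude by observing that the two $\varepsilon$-interleaving conditions cut out the same subset of $[0,\infty)$, so their infima coincide, giving the asserted equality of interleaving distances.
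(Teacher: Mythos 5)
Your proof is correct and is precisely the ``straightforward'' argument the paper alludes to without writing out (the paper merely states that the proof is easy and cites the more general \cite[Theorem 4.3]{de2018theory}). The reduction to equality of $\varepsilon$-interleaving sets, using functoriality for order-preservation and fullness for order-reflection together with coflow equivariance, is exactly the intended argument.
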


\section{Category Structure of Persistence Diagrams and Landscapes} \label{Section: 4 Erosion distance is landscape distance}

This section is dedicated to developing formal category structures on spaces of objects which are fundamental to the theory of topological data analysis (TDA), introduced in Section \ref{Section: 2 Background}. 

\subsection{Persistence Diagrams} \label{Section: 3 Categorification}

Combining results and constructions which already exist in the literature, we now describe the space of persistence diagrams (see Section \ref{subsec:persistence_diagrams_background}) as a poset category, endowed with a coflow and associated interleaving distance, and show that this distance recovers Patel's \emph{erosion distance}~\cite{patel2018generalized} (see Section \ref{subsec:erosion_distance_background}). 

\subsubsection{Poset Category of Persistence Diagrams}

Recall that $\rr = \{ (b,d)\in \R \times  \R  \mid b < d \}$ denotes the set of points in $\mathbb{R}^{2}$ above the diagonal, and that $\R^2_\leq$ is the superset which also includes the diagonal. We frequently denote elements of $\mathbb{R}^2_{<}$ as $I=(b,d)$ (intended to evoke the idea that $I$ is an \emph{interval}), which we refer to as \textbf{birth-death pairs}, and refer to the first coordinate as the \textbf{birth} and the second coordinate as the \textbf{death} of $I$. We equip $\rr$ with the poset relation given by inclusion:
\[
(b,d) \subseteq (b',d') \iff b' \leq b \text{ and } d \leq d'.
\]
Finally, recall that a persistence diagram is a finite multiset of points in $\rr$, which we typically denote as $Y = \{(b_i,d_i)\}_{i=1}^N$ (mildly abusing notation and eschewing multiset notation). As was stated in the introduction, we denote the collection of all persistence diagrams as $\mathsf{PDgm}$.

\begin{remark}[Generalized Persistence Diagrams]
    Patel ~\cite{patel2018generalized} considered generalized persistence diagrams as maps $Y:\rr \to \mathsf{G}$ with finite support, where $\mathsf{G}$ is an abelian group. In the case that $\mathsf{G} = \mathbb{Z}$, and with the additional requirement that the map takes non-negative values, this is equivalent to our notion of a persistence diagram.
\end{remark}

\begin{remark}[Finiteness Assumptions]\label{rem:finiteness}
    One could study persistence diagrams with countably many points. For example, Bubenik and Wagner in ~\cite{bubenik2020embeddings} defined persistence diagrams as functions $D: \mathcal{I} \to \rr$, where $\mathcal{I}$ is a countable indexing set. Throughout this paper, we define persistence diagrams to be finite, following ~\cite{patel2018generalized}, because the definition of Patel's erosion distance relies on rank functions, which are integer-valued. 
    
    Frequently in TDA, one considers persistence diagrams where death coordinates are allowed to take the value $\infty$. Points with this property represent topological features which never vanish in the filtration. We explicitly work with finite birth-death values rather than allowing birth-death pairs with infinite persistence. This assumption is only for convenience, as it avoids the need for cumbersome case distinctions in various definitions and results. Nevertheless, all results remain valid if one chooses to include birth-death pairs with infinite persistence, with only some adjustments to the definitions that follow.
\end{remark}

A poset structure on $\mathsf{PDgm}$ is defined in terms of the rank function (see Section \ref{subsec:rank_function_background}) as follows.

\begin{defn}[\cite{patel2018generalized}]
    We define the \textbf{persistence diagram poset structure}  $\leq_{\mathsf{PDgm}}$ on $\mathsf{PDgm}$ as follows:
    \[
    Y \leq_{\mathsf{PDgm}} Y' \iff \mathrm{rank}[Y](I) \leq \mathrm{rank}[Y'](I) \text{ for all } I \in \R^2_\leq.
    \]
    As the context is always clear, we simplify the notation and write $Y \leq Y'$ instead of $Y \leq_{\mathsf{PDgm}}Y'$.
\end{defn}

\subsubsection{Erosion Distance is a Coflow-Induced Interleaving Distance}\label{subsec:erosion_distance}

We now reformulate the erosion distance \eqref{eqn:erosion_distance_initial} in more categorical terms, in order to take advantage of the coflow framework of Section \ref{sec:coflows}. This follows the original construction of Patel, translated into our notation.

\begin{defn}[\cite{patel2018generalized}]
    Given $\varepsilon\geq 0$, we define the \textbf{$\varepsilon$-grow map} $\mathrm{Grow}_{\varepsilon}:\R_{\leq}^2 \to \R_{\leq}^2$ by
    \[
    \mathrm{Grow}_{\varepsilon}(b,d) \coloneqq (b - \varepsilon,d + \varepsilon).
    \]
    The $\varepsilon$-grow map induces a functor $\nabla_\varepsilon:\mathsf{PDgm} \to \mathsf{PDgm}$ defined by 
    \[
    \nabla_\varepsilon Y \coloneqq \{(b,d) \in \rr  \mid \mathrm{Grow}_\varepsilon(b,d) \in Y\}.
    \]
The \textbf{erosion distance}, $d_E$, between $Y$ and $Y'$ is defined by
\[
d_{E}(Y,Y') \coloneqq \inf\{\varepsilon\geq 0\,|\, \nabla_\varepsilon Y' \leq Y \text{ and } \nabla_\varepsilon Y\leq Y' \}.
\]
\end{defn}

We will now show that the family $\nabla = \{ \nabla_{\varepsilon} \}_{\varepsilon \geq 0}$ is a coflow and that the corresponding interleaving distance is the erosion distance. This perspective on erosion distance was already explored by Puuska in~\cite{puuska2020erosion}, albeit in the language of superlinear families of translations~\cite{Bubenik2015}, rather than that of flows on categories~\cite{de2018theory}, and by Kim, M\'{e}moli and Stefanou in~\cite{kim2024interleaving}. We provide the details in our case of interest, for the convenience of the reader. The first step is to prove the following.

\begin{prop}\label{Delta is a coflow}
    The family $\nabla = (\nabla_{\varepsilon})_{\varepsilon \geq 0}$ defines a coflow on $\mathsf{PDgm}$.
\end{prop}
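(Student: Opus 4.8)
The plan is to verify the three axioms in Definition~\ref{def: Coflow} directly for the family $\nabla = (\nabla_\varepsilon)_{\varepsilon \geq 0}$, with the one genuine preliminary being to confirm that each $\nabla_\varepsilon$ is a well-defined endofunctor of $\mathsf{PDgm}$ in the first place — i.e.\ that $\nabla_\varepsilon Y$ really is a (finite) persistence diagram and that $\nabla_\varepsilon$ is monotone with respect to $\leq_{\mathsf{PDgm}}$. For finiteness, note that $\mathrm{Grow}_\varepsilon$ is a bijection of $\R^2_\leq$ onto the set $\{(b,d) : b+\varepsilon \le d-\varepsilon\}$, so $\nabla_\varepsilon Y = \{\mathrm{Grow}_\varepsilon^{-1}(b,d) : (b,d)\in Y,\ b+2\varepsilon < d\}$ is a finite multiset, and it lies in $\rr$ because $(b-\varepsilon,d+\varepsilon)\in\rr$ forces $b-\varepsilon < d+\varepsilon$ (indeed one needs $b < d$, already true, and discards pairs that would land on or below the diagonal). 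For functoriality in the poset sense — the only morphisms here are $\leq_{\mathsf{PDgm}}$ relations — I would record the key computation $\mathrm{rank}[\nabla_\varepsilon Y](b,d) = \mathrm{rank}[Y](\mathrm{Grow}_\varepsilon(b,d)) = \mathrm{rank}[Y](b-\varepsilon, d+\varepsilon)$, valid for all $(b,d)\in\R^2_\leq$; this follows by unwinding the defining formula \eqref{eqn:rank_function}, since $(b_i,d_i)\in\nabla_\varepsilon Y$ with $b_i \le b \le d < d_i$ corresponds bijectively to $(b_i-\varepsilon, d_i+\varepsilon)\in Y$ with $b_i - \varepsilon \le b - \varepsilon$ and $d+\varepsilon < d_i + \varepsilon$. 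Monotonicity of $\nabla_\varepsilon$ is then immediate: if $Y \le Y'$ then $\mathrm{rank}[\nabla_\varepsilon Y](I) = \mathrm{rank}[Y](\mathrm{Grow}_\varepsilon I) \le \mathrm{rank}[Y'](\mathrm{Grow}_\varepsilon I) = \mathrm{rank}[\nabla_\varepsilon Y'](I)$ for all $I$.

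With that rank identity in hand, the three axioms are short. Axiom (1), $\nabla_0 = \mathcal{I}_{\mathsf{PDgm}}$, holds because $\mathrm{Grow}_0$ is the identity, so $\nabla_0 Y = \{(b,d) : (b,d)\in Y\} = Y$. Axiom (2), $\nabla_\varepsilon Y \le Y$, follows from the rank identity together with the monotonicity of rank functions in their argument: since $\mathrm{Grow}_\varepsilon(b,d) = (b-\varepsilon, d+\varepsilon) \supseteq (b,d)$ and $\mathrm{rank}[Y]$ is non-decreasing with respect to the inclusion order on $\R^2_\leq$ (larger interval, fewer points strictly containing it — more precisely $\mathrm{rank}[Y](b',d') \le \mathrm{rank}[Y](b,d)$ whenever $(b,d)\subseteq(b',d')$), we get $\mathrm{rank}[\nabla_\varepsilon Y](b,d) = \mathrm{rank}[Y](b-\varepsilon,d+\varepsilon) \le \mathrm{rank}[Y](b,d)$ for all $(b,d)$, which is exactly $\nabla_\varepsilon Y \le_{\mathsf{PDgm}} Y$. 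Axiom (3), $\nabla_{\varepsilon_1 + \varepsilon_2} = \nabla_{\varepsilon_1}\circ\nabla_{\varepsilon_2}$, reduces to the semigroup law $\mathrm{Grow}_{\varepsilon_1}\circ\mathrm{Grow}_{\varepsilon_2} = \mathrm{Grow}_{\varepsilon_1+\varepsilon_2}$ on $\R^2_\leq$: unraveling, $(b,d)\in\nabla_{\varepsilon_1}\nabla_{\varepsilon_2}Y$ iff $\mathrm{Grow}_{\varepsilon_1}(b,d)\in\nabla_{\varepsilon_2}Y$ iff $\mathrm{Grow}_{\varepsilon_2}\mathrm{Grow}_{\varepsilon_1}(b,d)\in Y$ iff $\mathrm{Grow}_{\varepsilon_1+\varepsilon_2}(b,d)\in Y$ iff $(b,d)\in\nabla_{\varepsilon_1+\varepsilon_2}Y$, and one checks this respects multiplicities.

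I do not anticipate a serious obstacle; the proof is essentially bookkeeping. The one place to be careful is the treatment of the diagonal and of multiplicities: the grow map is defined on all of $\R^2_\leq$ but persistence diagrams live in $\rr$, so in verifying that $\nabla_\varepsilon Y$ is a legitimate diagram one must confirm that pairs $(b,d)$ with $\mathrm{Grow}_\varepsilon(b,d)\in Y$ automatically satisfy $b<d$ (they do, since $Y\subseteq\rr$ and $\mathrm{Grow}_\varepsilon$ shifts the birth down and the death up, only increasing the gap $d-b$ by $2\varepsilon$), and that the multiset structure is preserved because $\mathrm{Grow}_\varepsilon$ is injective. The rest is the rank identity above, which is the real engine of the argument and which will be reused when identifying the induced interleaving distance with $d_E$.
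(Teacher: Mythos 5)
Your proposal is correct and follows essentially the same route as the paper: the three coflow axioms are verified via the same three ingredients the paper isolates as lemmas, namely the semigroup law for $\mathrm{Grow}_\varepsilon$, the order-reversal of rank functions under inclusion, and the rank identity $\mathrm{rank}[\nabla_\varepsilon Y](I) = \mathrm{rank}[Y](\mathrm{Grow}_\varepsilon I)$, with your extra preliminary on functoriality/well-definedness spelling out something the paper leaves implicit. One small inaccuracy in your final paragraph: it is \emph{not} true that $\mathrm{Grow}_\varepsilon(b,d) \in Y$ forces $b<d$ (e.g.\ $\varepsilon>0$, $b=d$, $(b-\varepsilon,d+\varepsilon)\in Y$), which is why the paper defines $\nabla_\varepsilon Y$ as a subset of $\R^2_\leq$; such diagonal points contribute zero to every rank value, so they are invisible in the poset $\mathsf{PDgm}$ and the rest of your argument is unaffected.
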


The proof will use the following results; the first two are obvious, while the last follows from \cite[Proposition 5.1]{patel2018generalized},  but is proved here, in order to mitigate difficulties in translating between notational conventions.

\begin{lem} \label{lem: Grow e1 o Grow e2 = Grow e1 + e2}
    For $\varepsilon_1,\varepsilon_2 \geq 0$, we have that $\mathrm{Grow}_{\varepsilon_1}\circ \mathrm{Grow}_{\varepsilon_2} = \mathrm{Grow}_{\varepsilon_1 + \varepsilon_2}$.
\end{lem}

\begin{lem}\label{rankY is order preserving}
    The rank function $\mathrm{rank}[Y]$ is order-reversing. That is, if $I_1 \subseteq I_2$, then 
    \[
    \mathrm{rank}[Y](I_2)\leq \mathrm{rank}[Y](I_1).
    \]
\end{lem}

\begin{lem}[{\cite[Proposition 5.1.]{patel2018generalized}}] \label{prop 5.1}
    Let $Y$ be a persistence diagram. Then, given any $\varepsilon\geq 0$, we have $\mathrm{rank}[Y]\left(\mathrm{Grow}_{\varepsilon}(I) \right)= \mathrm{rank}[\nabla_\varepsilon Y](I)$ for all $I \in \mathbb{R}_{\leq}^2$.
    \end{lem}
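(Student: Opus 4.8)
The plan is to unwind both sides of the asserted identity into explicit counts of birth-death pairs of $Y$, and then to exhibit a multiplicity-preserving bijection between the two index sets being counted; the map realizing this bijection is $\mathrm{Grow}_\varepsilon$ itself.

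Fix $I = (b,d) \in \R^2_\leq$ and $\varepsilon \geq 0$. First I would expand the left-hand side via \eqref{eqn:rank_function}:
\[
\mathrm{rank}[Y]\bigl(\mathrm{Grow}_\varepsilon(I)\bigr) = \mathrm{rank}[Y](b-\varepsilon,\, d+\varepsilon) = \bigl|\{(b_i,d_i)\in Y \mid b_i \leq b-\varepsilon \leq d+\varepsilon < d_i\}\bigr|.
\]
Because $b \leq d$ and $\varepsilon \geq 0$, the middle inequality $b-\varepsilon \leq d+\varepsilon$ holds automatically, so the condition defining this indexing set is equivalent to the pair of inequalities $b_i + \varepsilon \leq b$ and $d < d_i - \varepsilon$. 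Next I would expand the right-hand side: by definition of $\nabla_\varepsilon$, a pair $(b',d')$ lies in $\nabla_\varepsilon Y$ exactly when $\mathrm{Grow}_\varepsilon(b',d') = (b'-\varepsilon,\, d'+\varepsilon)$ lies in $Y$, with matching multiplicity, so \eqref{eqn:rank_function} gives
\[
\mathrm{rank}[\nabla_\varepsilon Y](I) = \bigl|\{(b',d')\in\nabla_\varepsilon Y \mid b' \leq b \leq d < d'\}\bigr|,
\]
and substituting $b_i = b'-\varepsilon$, $d_i = d'+\varepsilon$ turns the conditions $b' \leq b$ and $d < d'$ into, once again, $b_i + \varepsilon \leq b$ and $d < d_i - \varepsilon$.

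The one point that needs care — and the only genuinely fiddly part of the argument — is to verify that this substitution really is a bijection between the two index sets, i.e., that every $(b_i,d_i) \in Y$ satisfying $b_i + \varepsilon \leq b$ and $d < d_i - \varepsilon$ does arise from a legitimate element of $\nabla_\varepsilon Y$. Since $\mathrm{Grow}_\varepsilon$ is injective (with inverse $(b,d) \mapsto (b+\varepsilon, d-\varepsilon)$), multiplicities are automatically respected, and the only possible obstruction is that the preimage $(b_i+\varepsilon,\, d_i-\varepsilon)$ fails to lie in $\rr$; but whenever $b_i + \varepsilon \leq b$ and $d < d_i - \varepsilon$, the hypothesis $b \leq d$ yields $b_i + \varepsilon \leq b \leq d < d_i - \varepsilon$, so the preimage does lie in $\rr$. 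Equivalently, a birth-death pair of $Y$ that is "eroded away" by $\nabla_\varepsilon$ — namely one with $d_i - b_i \leq 2\varepsilon$ — is never counted on the left-hand side either, so it can be safely ignored on both sides. Hence $\mathrm{Grow}_\varepsilon$ identifies $\{(b_i,d_i)\in Y \mid b_i + \varepsilon \leq b,\ d < d_i - \varepsilon\}$ with $\{(b',d')\in\nabla_\varepsilon Y \mid b' \leq b \leq d < d'\}$ as multisets, so the two cardinalities coincide, establishing the lemma. I expect this multiset bookkeeping (tracking which points survive erosion) to be the only place where one must slow down; the estimate above disposes of it cleanly.
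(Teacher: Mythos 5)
Your proof is correct and follows essentially the same route as the paper's: both expand the two rank functions via \eqref{eqn:rank_function} and identify the index sets through the substitution $(b',d') = (b_i+\varepsilon, d_i-\varepsilon)$, with the chain $b_i + \varepsilon \leq b \leq d < d_i - \varepsilon$ disposing of any worry about pairs that fail to survive $\nabla_\varepsilon$. The paper merely presents this as one chain of multiset equalities rather than expanding the two sides separately, but the content is identical.
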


\begin{proof}
    For $I=(b,d) \in \mathbb{R}_{\leq}^2$, we have 
    \begin{align*}
    \mathrm{rank}[Y]\left(\mathrm{Grow}_{\varepsilon}(I)\right) &= |\{(b_i,d_i) \in Y \mid b_i \leq b-\varepsilon \leq d + \varepsilon < d_i \}| \\
    &= |\{(b_i,d_i) \in Y \mid b_i + \varepsilon \leq b \leq d < d_i - \varepsilon \}| \\
    &= |\{(b_i + \varepsilon,d_i - \varepsilon) \in \R_{\leq}^2 \mid (b_i,d_i) \in Y, \; b_i + \varepsilon \leq b \leq d < d_i - \varepsilon \}| \\
    &= |\{(b_i',d_i') \in \R_{\leq}^2 \mid \mathrm{Grow}_\varepsilon(b_i',d_i') \in Y, \; b_i' \leq b \leq d < d_i' \}|\\
    &= \mathrm{rank}[\nabla_\varepsilon Y](I).
    \end{align*}
\end{proof}

\begin{proof}[Proof of Proposition \ref{Delta is a coflow}]
    Let $Y \in \mathsf{PDgm}$. First, observe that 
    \[
    \nabla_0 Y = \{(b,d) \in \rr \mid \mathrm{Grow}_0(b,d) \in Y\} = Y,
    \]
    so that the first axiom in Definition \ref{def: Coflow} is satisfied. To prove that the second axiom holds, observe that, by Lemmas \ref{rankY is order preserving} and \ref{prop 5.1}, we have, for all $I \in \R_\leq^2$,
    \[
    \mathrm{rank}[\nabla_\varepsilon Y](I) = \mathrm{rank}[Y](\mathrm{Grow}_{\varepsilon}(I)) \leq \mathrm{rank}[Y](I).
    \]
    Thus $\nabla_\varepsilon Y \leq Y$. Finally, let $\varepsilon_1, \varepsilon_2 \geq 0$ and apply Lemma \ref{lem: Grow e1 o Grow e2 = Grow e1 + e2} to deduce that 
    \[
    \begin{split}
       \nabla_{\varepsilon_1} \nabla_{\varepsilon_2} Y &= \{(b,d) \in \rr \mid \mathrm{Grow}_{\varepsilon_1} \circ \mathrm{Grow}_{\varepsilon_2} (b,d) \in Y\} \\
       &= \{(b,d) \in \rr \mid \mathrm{Grow}_{\varepsilon_1 + \varepsilon_2}  (b,d) \in Y\} = \nabla_{\varepsilon_1 + \varepsilon_2} Y. 
    \end{split}
    \]
    This proves that the final axiom of a coflow is satisfied.
\end{proof}

This leads immediately to the main result of this subsection.

\begin{prop} \label{prop: The interleaving distance induced by the coflow nabla is erosion distance}
    The interleaving distance induced by the coflow $\nabla$ is the erosion distance.
\end{prop}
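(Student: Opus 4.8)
The plan is simply to unwind definitions. By Proposition~\ref{Delta is a coflow}, $\nabla = (\nabla_\varepsilon)_{\varepsilon \geq 0}$ is a coflow on $\mathsf{PDgm}$, so Definition~\ref{def: Coflow} applies verbatim and the associated interleaving distance between diagrams $Y, Y'$ is
\[
d_{\mathsf{PDgm}}^{\nabla}(Y,Y') = \inf\{\varepsilon \geq 0 \mid \nabla_\varepsilon Y \leq Y' \text{ and } \nabla_\varepsilon Y' \leq Y\}.
\]
This coincides on the nose with the definition of $d_E$ given in Section~\ref{subsec:erosion_distance}, so the only content of the proposition is to check that this reformulation of $d_E$ matches the rank-function definition~\eqref{eqn:erosion_distance_initial} stated earlier.

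To do this I would fix $\varepsilon \geq 0$ and translate the relation $\nabla_\varepsilon Y \leq Y'$ into a statement about rank functions. By the definition of $\leq_{\mathsf{PDgm}}$, this relation says $\mathrm{rank}[\nabla_\varepsilon Y](I) \leq \mathrm{rank}[Y'](I)$ for every $I \in \R^2_\leq$; by Lemma~\ref{prop 5.1}, $\mathrm{rank}[\nabla_\varepsilon Y](I) = \mathrm{rank}[Y](\mathrm{Grow}_\varepsilon(I))$, which for $I = (b,d)$ equals $\mathrm{rank}[Y](b-\varepsilon, d+\varepsilon)$. Hence $\nabla_\varepsilon Y \leq Y'$ holds if and only if $\mathrm{rank}[Y](b-\varepsilon, d+\varepsilon) \leq \mathrm{rank}[Y'](b,d)$ for all $b \leq d$, and the symmetric equivalence holds for $\nabla_\varepsilon Y' \leq Y$. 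Therefore the set of $\varepsilon$ over which the infimum defining $d_{\mathsf{PDgm}}^{\nabla}(Y,Y')$ is taken is exactly the set appearing in~\eqref{eqn:erosion_distance_initial}, so the two infima agree and $d_{\mathsf{PDgm}}^{\nabla} = d_E$.

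There is no genuine obstacle here: the statement is essentially bookkeeping, and its one substantive input, Lemma~\ref{prop 5.1}, is already established. The only points needing (minor) care are aligning the index sets ($I \in \R^2_\leq$ versus the pairs $b \leq d$) and the directions of the inequalities across the two formulations of $d_E$; both match up immediately.
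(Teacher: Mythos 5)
Your proof is correct and follows essentially the same route as the paper's: both proofs observe that the coflow interleaving distance $d_{\mathsf{PDgm}}^{\nabla}$ coincides term-for-term with the categorical reformulation of $d_E$ given in Section~\ref{subsec:erosion_distance}. You additionally spell out, via Lemma~\ref{prop 5.1}, why that reformulation matches the rank-function definition~\eqref{eqn:erosion_distance_initial}; the paper treats that equivalence as already established when stating the two definitions, so your extra paragraph is sound bookkeeping rather than a different argument.
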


\begin{proof}
    This is clear by definition, as  
    \[
    d_{\mathsf{PDgm}}^{\nabla}(Y,Y') = \inf\{
    \varepsilon \geq 0 \, | \, \nabla_\varepsilon Y\leq Y' \text{ and } \nabla_\varepsilon Y' \leq Y 
    \} 
    = d_{E}(Y,Y').
    \]
\end{proof}

\subsection{Persistence Landscapes} \label{subsection: Persistence landscapes}

We introduced the persistence landscape $\lambda^Y$ of a persistence diagram in Section \ref{subsec:persistence_landscapes_background}. In this section, we derive a characterization of the image of the map $Y \mapsto \lambda^Y$ as a subset of $L^\infty(\mathbb{N} \times \mathbb{R})$. This is used in Section \ref{subsec:coflow_on_landscapes} to define a categorical structure on this subspace and to then interpret the landscape distance \eqref{eqn:landscape_distance_background} in terms of a coflow.

\subsubsection{Persistence Landscapes and Landscape Sequences}

Our first goal is to define landscapes as abstract objects, apart from arising solely as invariants of persistence diagrams. We begin with a preliminary definition.

\begin{defn}\label{def:landscape_curve}
    A function $\R \to \R$ is called a \textbf{landscape curve} if:
    \begin{enumerate}
        \item It is non-negative, continuous and piecewise-linear.
        \item It is compactly supported.
        \item It is differentiable except at finitely many points, called \textbf{critical points}.
        \item At non-critical support points, its derivative is $+1$ or $-1$. 
    \end{enumerate}
    These conditions imply that any critical point must be a local maximum or local minimum (exclusively) of the function.
\end{defn}

We now define our abstract notion of a persistence landscape, referred to as a \emph{landscape sequence}. 

\begin{defn} \label{def: landscape sequence}
    A \textbf{landscape sequence} is a sequence $\lambda = (\lambda_1,\lambda_2,\dots)$ of functions $\lambda_k:\R \to \R$ satisfying: 
    \begin{enumerate} 
        \item (Structure of $\lambda_k$) Each $\lambda_k$ is a landscape curve.
        \item (Monotonicity) For all $k\in \mathbb{N}$ and $t\in \mathbb{R}$, $\lambda_{k}(t) \geq \lambda_{k+1}(t)$.
        \item (Finite support) There exists $K \in \mathbb{N}$ such that $\lambda_k = 0$ for all $k \geq K$.
        \item (Non-negative degree) Any point $(t,h) \in \mathbb{R}^2$ with $h > 0$ has non-negative degree,  where the \textbf{degree} of a point $(t,h)$ is defined by
        \[
        \operatorname{deg}_{\lambda}(t,h) = |\{k\mid (t,h) \mbox{ is a local max. of } \lambda_k\}| - |\{k\mid (t,h) \mbox{ is a local min. of } \lambda_k\}|.
        \]
    \end{enumerate}
\end{defn}

\begin{terminology}
The term \textbf{critical point} may refer either to a pair $(t,h)$ or to the coordinate $t$ alone; i.e., when the height $h$ is implicit or can be inferred from the function values, we may simply refer to $t$ as a critical point. The set of \textbf{critical points} of the landscape sequence $\lambda$ is defined to be the union of the sets of critical points of each $\lambda_k$.
\end{terminology}

With this abstract definition of landscape sequences in hand, the main result of this subsection is that landscape sequences are, in fact, in one-to-one correspondence with persistence landscapes (arising from a persistence diagram).

\begin{thm}[Structure of Persistence Landscapes]\label{thm: Every landscape sequence is a persistence landscape of some persistence diagram}
    The map $\mathsf{PDgm} \to L^\infty(\mathbb{N} \times \R)$ given by $Y \mapsto \lambda^Y$ defines a bijection between the set of persistence diagrams and the set of landscape sequences. That is:
    \begin{enumerate}
        \item Given $Y \in \mathsf{PDgm}$, its persistence landscape $\lambda^Y$ is a landscape sequence.
        \item Conversely, every landscape sequence $\lambda$ is a persistence landscape of a unique persistence diagram $Y\in \mathsf{PDgm}$.
    \end{enumerate} 
\end{thm}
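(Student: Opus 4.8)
The plan is to prove the two directions separately, using the tent-function description from Proposition~\ref{prop: lambda Y is just kmax of envelope functions} as the main computational tool.

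For part (1), I would start from the formula $\lambda^Y_k(t) = k\mathrm{max}\{f_{(b_i,d_i)}(t) : i=1,\dots,N\}$. Each tent function $f_{(b,d)}$ is manifestly a landscape curve in the sense of Definition~\ref{def:landscape_curve} (it is non-negative, continuous, piecewise-linear, compactly supported on $(b,d)$, and has slopes $\pm 1$ away from its single critical point $\frac{b+d}{2}$). The $k\mathrm{max}$ of finitely many landscape curves is again piecewise-linear, non-negative, compactly supported, and continuous; one must check the slope-$\pm1$ condition, which follows because on any sufficiently small interval where no two of the $f_{(b_i,d_i)}$ cross, the pointwise $k\mathrm{max}$ locally agrees with one of them, hence has slope $\pm 1$ (slope $0$ pieces cannot occur since the tent functions have no flat support pieces, and a local selection is one of the $f_{(b_i,d_i)}$). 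Monotonicity $\lambda^Y_k \geq \lambda^Y_{k+1}$ is immediate from the definition of $k\mathrm{max}$, and finite support ($\lambda^Y_k = 0$ for $k>N$) is built into the convention. The one genuinely substantive point is the non-negative degree condition: I would argue that at any point $(t,h)$ with $h>0$, the local maxima of the various $\lambda^Y_k$ at height $h$ come from ``upward corners'' of the tent-function arrangement and the local minima from ``downward corners,'' and a counting/parity argument on the crossings of the $f_{(b_i,d_i)}$ through height $h$ near $t$ shows the number of maxima minus minima is non-negative. Equivalently, one can phrase $\mathrm{deg}_\lambda(t,h)$ in terms of how $\mathrm{rank}[Y]$ changes across the region surrounding $(t,h)$ and use that rank is a non-negative integer; this gives a cleaner route.

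For part (2), given an abstract landscape sequence $\lambda$, I would construct a diagram $Y$ by reading off its ``tent decomposition.'' Concretely, I expect that one can peel off tents greedily: let $m_1 = \max_t \lambda_1(t)$, attained at some $t_1$; then near $t_1$ the curve $\lambda_1$ looks like a tent of height $m_1$, and subtracting a maximal tent $f_{(b,d)}$ with apex $(t_1, m_1)$ from the arrangement (in the $k\mathrm{max}$ sense, i.e. removing one sheet) yields a landscape sequence of strictly smaller ``total complexity.'' Iterating produces a finite multiset of tents $\{f_{(b_i,d_i)}\}$, and I would then verify that $\lambda = \lambda^Y$ for $Y = \{(b_i,d_i)\}$ by checking the $k\mathrm{max}$ identity; the non-negative degree hypothesis is exactly what guarantees that at each stage the apex really is the top of a full tent (no ``half tent'' obstruction) so that the subtraction is legitimate and stays within landscape sequences. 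For uniqueness, I would show that $\lambda^Y$ determines $\mathrm{rank}[Y]$ (via $\mathrm{rank}[Y](t-h,t+h) = |\{k : \lambda^Y_k(t) \geq h\}|$, which follows directly from the original definition of $\lambda^Y$ in Section~\ref{subsec:persistence_landscapes_background}), and that $\mathrm{rank}[Y]$ determines the multiset $Y$ by an inclusion-exclusion argument on the rank function. Presumably the paper isolates the tent-peeling step as Theorem~\ref{thm: Landscape Decomposition Theorem} (the ``Landscape Decomposition Theorem'') and the direct-sum operation of Definition~\ref{defn: Direct sum operation}, in which case part (2) reduces to invoking that decomposition.

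The main obstacle I anticipate is making the non-negative degree condition do its work rigorously in both directions: in direction (1), showing that a $k\mathrm{max}$ of tents can never produce a point of negative degree requires a careful local analysis of how the sorted values $\lambda^Y_1(t) \geq \lambda^Y_2(t) \geq \cdots$ behave as $t$ passes through a critical coordinate; in direction (2), one must show that the greedy tent-subtraction is well-defined — i.e., that the apex of $\lambda_1$ extends downward to a genuine tent $(b,d)$ with the correct slopes on both sides and that removing it preserves all four axioms of a landscape sequence, with finiteness/termination following from a monovariant such as the total number of critical points or $\sum_k \|\lambda_k\|_1$. I would handle this by translating everything into statements about the level sets $\{\lambda_k \geq h\}$, which are finite unions of closed intervals, and tracking how their endpoints merge and split; the degree condition becomes the statement that, scanning from the top height downward, intervals can only be born or merge (never split into more pieces than the number born), which is precisely the combinatorics of a persistence diagram.
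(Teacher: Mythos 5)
Your proposal is correct and follows essentially the same route as the paper: part (1) via writing $\lambda^Y$ as a $k\mathrm{max}$/direct sum of tent functions and checking that the direct sum operation preserves the landscape-sequence axioms (with the non-negative-degree verification being the only substantive step, carried out in the paper as Proposition~\ref{prop: Degree nonzero prop}), and part (2) via a tent-peeling decomposition plus a uniqueness observation, which the paper isolates exactly as you anticipate in Theorem~\ref{thm: Landscape Decomposition Theorem} and Lemma~\ref{lem: Landscape Decomposition Theorem Lemma}. The only notable technical deviation is that you propose peeling from the global maximum of $\lambda_1$, whereas the paper peels from its leftmost local maximum---a choice that makes the bookkeeping one-sided because the upward leg of the removed tent then coincides with $\lambda_1$ all the way down to height zero---but this is a matter of convenience rather than a genuinely different argument.
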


The ``forward" direction, part 1, is relatively straightforward, via structural properties of landscape functions that were established in~\cite[Section 5]{betthauser2022graded}. The converse, part 2, is less clear, and is the main focus of the proof. An immediate corollary is that we can explicitly describe the image of the persistence landscape map. As far as we are aware, such a description has not been previously established in the literature.

\begin{cor}\label{cor:image_of_landscape_map}
    The image of the landscape map $Y \mapsto \lambda^{Y}$ is exactly the set of landscape sequences, which is a proper, nonlinear (moreover, non-affine linear), subset of the space of functions $L^{\infty}(\N \times \R)$.
\end{cor}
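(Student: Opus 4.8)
\textbf{Proof proposal for Corollary \ref{cor:image_of_landscape_map}.}

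The plan is to derive the corollary as an essentially formal consequence of Theorem \ref{thm: Every landscape sequence is a persistence landscape of some persistence diagram}, together with a few explicit small examples. Part (1) and (2) of that theorem already assert that the image of the landscape map is exactly the set of landscape sequences (as a subset of $L^\infty(\N\times\R)$), so the only remaining work is to justify the three adjectives: that this set is a \emph{proper} subset, that it is \emph{not} an affine linear subspace, and (implicitly) that it is also not a linear subspace. I would organize the proof as three short verifications.

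First, properness: I would exhibit a single function in $L^\infty(\N\times\R)$ that is not a landscape sequence. The cleanest choice is the constant function $\mathbf{1}$ (or any nonzero constant), which violates the compact-support requirement in Definition \ref{def:landscape_curve}(2); alternatively, a single ``bump'' placed only in the second slot $\lambda_2$ with $\lambda_1\equiv 0$ violates the monotonicity condition of Definition \ref{def: landscape sequence}(2). Either one shows the inclusion is strict. Second, non-affine-linearity: it suffices to produce two landscape sequences $\lambda^{Y}, \lambda^{Y'}$ whose average $\tfrac12(\lambda^{Y}+\lambda^{Y'})$ is not a landscape sequence, since an affine subspace is closed under taking such convex combinations (midpoints). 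A convenient example is to take $Y=\{(0,2)\}$ and $Y'=\{(1,3)\}$: then $\lambda^{Y}_1=f_{(0,2)}$, $\lambda^{Y'}_1=f_{(1,3)}$, and $\lambda^{Y}_k=\lambda^{Y'}_k=0$ for $k\geq 2$, so the midpoint has first slot $\tfrac12(f_{(0,2)}+f_{(1,3)})$, which at a generic non-critical support point has derivative $\pm\tfrac12$ rather than $\pm 1$, violating Definition \ref{def:landscape_curve}(4); one should also check it is not rescuable as some other landscape sequence, which is immediate since the set of landscape curves is determined pointwise. Third, non-linearity: this is actually automatic once we know it is nonempty and not an affine subspace, but if one wants it stated directly, note that landscape curves are non-negative (Definition \ref{def:landscape_curve}(1)), so $-\lambda^{Y}$ is never a landscape sequence for $Y\neq\varnothing$, hence the set is not closed under negation.

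I do not anticipate a genuine obstacle here, since all the real content is in Theorem \ref{thm: Every landscape sequence is a persistence landscape of some persistence diagram}; the only point requiring mild care is making the ``non-affine-linear'' claim precise. The subtlety is that ``non-affine linear subset'' should be read as: the image is not a coset $v_0 + V$ of a linear subspace $V$ of $L^\infty(\N\times\R)$. The argument via midpoints handles exactly this, because any such coset is convex and in particular contains the midpoint of any two of its points; exhibiting one pair whose midpoint escapes the image therefore rules out every possible affine structure simultaneously. I would phrase the write-up so that this logical point is explicit, and then the examples above close the argument in a few lines.
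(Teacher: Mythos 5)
Your proposal is correct: it simply fleshes out the ``immediate'' argument the paper has in mind, reading the corollary as a direct consequence of Theorem \ref{thm: Every landscape sequence is a persistence landscape of some persistence diagram} for the exact-image claim, plus elementary counterexamples (a non-compactly-supported function for properness, a midpoint of two tent landscapes producing a slope of $\pm\tfrac12$ for non-affineness) for the remaining adjectives. The paper gives no explicit proof and treats the statement as immediate, so your write-up matches its intent, and your chosen examples check out.
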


The proof of Theorem \ref{thm: Every landscape sequence is a persistence landscape of some persistence diagram}  will be derived from a preliminary result, concerning an algebraic description of the structure of landscape sequences. This description and the ensuing proof of Theorem \ref{thm: Every landscape sequence is a persistence landscape of some persistence diagram} are provided in the following subsection.

\subsubsection{Decomposition Theorem for Landscape Sequences}

To describe the structure of the space of landscape sequences, we introduce the following algebraic operation. It uses the \emph{kmax function}, introduced in Proposition \ref{prop: lambda Y is just kmax of envelope functions}.

\begin{defn} \label{defn: Direct sum operation}
    Given two landscape sequences $\eta$ and $\mu$, we define their \textbf{direct sum} 
    \[
    \lambda = \eta\oplus \mu
    \]
    to be a sequence of functions $\lambda_k$ given by
    \[
    \lambda_k(t) = k\mathrm{max}\{\eta_{1}(t),\dots,\eta_{k}(t),\mu_{1}(t),\dots,\mu_{k}(t)
    \}.
    \]
\end{defn}

\begin{prop} \label{prop: Degree nonzero prop}
    The sequence of functions defined by $\lambda = \eta \oplus \mu$ is also a landscape sequence.
\end{prop}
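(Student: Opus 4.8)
The plan is to verify the four axioms of Definition \ref{def: landscape sequence} directly for $\lambda = \eta \oplus \mu$, drawing on the structure theory of $k\mathrm{max}$ of piecewise-linear functions. First I would set up notation: let $g_1, \dots, g_{2k}$ abbreviate the functions $\eta_1, \dots, \eta_k, \mu_1, \dots, \mu_k$ appearing in the definition of $\lambda_k$, noting that monotonicity of $\eta$ and $\mu$ means $\eta_1 \geq \cdots \geq \eta_k$ and $\mu_1 \geq \cdots \geq \mu_k$ pointwise. For axiom (1), I would argue that $k\mathrm{max}$ of finitely many landscape curves is again a landscape curve: each $g_j$ is non-negative, continuous, compactly supported and piecewise-linear with slopes in $\{-1,0,+1\}$ (slope $0$ only off the support), and the pointwise $k$-th order statistic of such functions inherits continuity (order statistics depend continuously on their arguments), compact support (contained in the union of supports), and piecewise-linearity. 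The slope condition needs a local argument: away from the finitely many points where two of the $g_j$ cross or where some $g_j$ has a breakpoint, the ordering of the values $g_1(t), \dots, g_{2k}(t)$ is locally constant, so $\lambda_k$ locally coincides with one fixed $g_j$ and hence has slope $\pm 1$ there (and is $0$ exactly where all but $<k$ of the $g_j$ vanish, which is an open condition away from finitely many points). This also gives that critical points of $\lambda_k$ form a finite set and are local maxima or minima.

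Axiom (2), monotonicity $\lambda_k \geq \lambda_{k+1}$, is the easiest: $\lambda_{k+1}(t)$ is the $(k+1)$-th largest among $\eta_1(t), \dots, \eta_{k+1}(t), \mu_1(t), \dots, \mu_{k+1}(t)$, and I would show this is $\leq$ the $k$-th largest among $\eta_1(t), \dots, \eta_k(t), \mu_1(t), \dots, \mu_k(t)$. The key observation is that the multiset $\{\eta_1(t), \dots, \eta_{k+1}(t), \mu_1(t), \dots, \mu_{k+1}(t)\}$ (of size $2k+2$) has the property that its $(k+1)$-th largest element is at most the $k$-th largest element of its sub-multiset obtained by deleting $\eta_{k+1}(t)$ and $\mu_{k+1}(t)$: indeed, by the within-sequence monotonicity, $\eta_{k+1}(t)$ and $\mu_{k+1}(t)$ are each among the smallest in their own chain, so deleting them removes at most $k$ elements that could sit above position $k+1$; a short counting argument (if $k+1$ or more of the retained $2k$ values were $\geq$ the $(k+1)$-th largest of the full multiset, we get the desired bound) finishes it. Axiom (3), finite support, follows from $\eta_k = \mu_k = 0$ for $k$ large (say $k \geq K_\eta, K_\mu$): for $k \geq \max(K_\eta, K_\mu)$ the multiset defining $\lambda_k(t)$ contains at least $2k - (K_\eta - 1) - (K_\mu - 1) > k$ zeros when $k$ is large enough, forcing $\lambda_k = 0$; being slightly careful with the threshold, $\lambda_k = 0$ for $k \geq K_\eta + K_\mu$ suffices.

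The main obstacle is axiom (4), the non-negative degree condition, since $\mathrm{deg}_\lambda(t,h)$ mixes contributions from all the $\lambda_k$ and must be controlled using only the corresponding data for $\eta$ and $\mu$. The strategy I would use is to relate the degree of a point for a landscape sequence $\mu$ to a more local combinatorial quantity — essentially, for a fixed $t$, how the critical heights stack up across the functions $\mu_k$ — and to show this is additive under the $\oplus$ operation. Concretely, fix a point $(t,h)$ with $h > 0$; the degree $\mathrm{deg}_\lambda(t,h)$ counts (local maxes minus local mins) of the $\lambda_k$ at height exactly $h$ over $t$. I would like to prove the clean formula $\mathrm{deg}_{\eta \oplus \mu}(t,h) = \mathrm{deg}_\eta(t,h) + \mathrm{deg}_\mu(t,h)$, from which axiom (4) for $\lambda$ follows immediately from axioms (4) for $\eta$ and $\mu$. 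To establish this, I would examine, for $t'$ in a small punctured neighborhood of $t$ on each side, the sorted list of values $\{\eta_j(t'), \mu_j(t')\}$ and track how many of these cross the level $h$, using the fact that near $t$ each $\eta_j$ and $\mu_j$ is linear with slope in $\{-1, 0, +1\}$; the count of indices $k$ with $\lambda_k(t') > h$ (resp. $\geq h$) is, by definition of $k\mathrm{max}$ and order statistics, exactly the number of pairs $(j, \bullet)$ among the $2k$-element lists with value $> h$ (resp. $\geq h$) — and here the within-chain monotonicity lets one simplify the $\min$-over-$k$ inherent in using lists of growing length. The degree at $(t,h)$ can be read off as (number of $\lambda_k$ exceeding $h$ just left of $t$) $+$ (number exceeding $h$ just right of $t$) $-$ $2\cdot$(number exceeding $h$ at $t$), up to handling the horizontal-slope pieces and the distinction between $>$ and $\geq$; the same formula holds for $\eta$ and for $\mu$, and the counting functions are additive because the defining multiset for $\lambda_k$ is the (ordered) concatenation of those for $\eta$ and $\mu$. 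Assembling these identities gives the additivity of degree and hence axiom (4), completing the proof that $\eta \oplus \mu$ is a landscape sequence.
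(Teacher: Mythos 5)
Your overall strategy is correct and genuinely \emph{different} from the paper's proof, and in fact it establishes a stronger statement. The paper verifies the non-negative-degree axiom by classifying each local minimum of $\lambda = \eta \oplus \mu$ into four cases and constructing, for each case, an injective map into the local maxima of $\lambda$; this shows $\mathrm{deg}_\lambda(t,h) \geq 0$ directly but does not yield a quantitative identity. You instead aim for the identity $\mathrm{deg}_{\eta\oplus\mu}(t,h) = \mathrm{deg}_\eta(t,h) + \mathrm{deg}_\mu(t,h)$, which is both true and cleaner. The crucial supporting observation --- that for every $t'$ the \emph{multiset} $\{\lambda_k(t')\}_k$ coincides with the disjoint union $\{\eta_j(t')\}_j \sqcup \{\mu_j(t')\}_j$, since within-chain monotonicity lets you drop the $2k$-element truncation and sort the full union --- is correct and is exactly the right engine: it makes the counting functions $N^{>}_\lambda(s,h) \coloneqq |\{k : \lambda_k(s) > h\}|$ and $N^{\geq}_\lambda(s,h)$ additive in $\eta,\mu$. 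This gives a more elegant and slightly more informative argument than the paper's.

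However, the explicit formula you write for the degree,
\[
\mathrm{deg}(t,h) \overset{?}{=} \big(\#\,\lambda_k > h \text{ just left of } t\big) + \big(\#\,\lambda_k > h \text{ just right of } t\big) - 2\big(\#\,\lambda_k > h \text{ at } t\big),
\]
is incorrect under any of the $>$/$\geq$ substitutions you hedge toward. To see this, take $\lambda$ with a single local \emph{minimum} of $\lambda_1$ at $(t,h)$ (with $\lambda_1 > 0$ nearby); the degree is $-1$, but your quantity evaluates to $1 + 1 - 2\cdot 0 = 2$ (with $>$) or $1 + 1 - 2\cdot 1 = 0$ (with $\geq$ at $t$), and the single-local-maximum test gives different wrong values, so no sign or $>$/$\geq$ tweak rescues it. A formula that does work, with $N^{\bullet}(s)\coloneqq N^{\bullet}_\lambda(s,h)$, is
\[
\mathrm{deg}_\lambda(t,h) = N^{\geq}(t) + N^{>}(t) - N^{>}(t^-) - N^{>}(t^+),
\]
which one can check by writing $p = N^{\geq}(t) - N^{>}(t)$ for the number of curves through $(t,h)$, $a = N^{>}(t^-)-N^{>}(t)$ for those approaching from above on the left, $b = N^{>}(t^+)-N^{>}(t)$ for those leaving upward on the right, and using the fact that the nested (non-crossing) ordering of the $\lambda_k$ forces exactly $\min(a,b)$ local minima and $p - \max(a,b)$ local maxima among the curves through $(t,h)$, giving $\mathrm{deg} = p - a - b$. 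Since $N^{>}$ and $N^{\geq}$ are additive, so is $\mathrm{deg}$, and axiom~(4) follows. So the idea is right but you should replace your stated degree formula with a correct one before the argument is complete; the verification of axioms (1)--(3) in your write-up is fine (the paper omits these as routine).
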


\begin{proof}
    Let $\lambda = \eta \oplus \mu$, where $\eta$ and $\mu$ are two landscape sequences. We will omit the verification of Properties 1, 2, and 3 in Definition \ref{def: landscape sequence}, as they follow directly from the construction of the direct sum operation and are relatively straightforward to verify. Our goal is to show that $\lambda$ satisfies Property 4.  
    
    Let $\{(t_\ell,h_\ell)\}$ denote the multiset of local minima of $\lambda$ with positive second coordinates, $h_\ell > 0$. Such a critical point $(t_\ell,h_\ell)$ of $\lambda_k$ can arise in exactly one of four ways:
    \begin{enumerate}
        \item as a local minimum of some $\eta_i$;
        \item as a local minimum of some $\mu_i$;
        \item there are $\eta_i$, with negative slope at $t_\ell$, and $\mu_j$, with positive slope at $t_\ell$, such that, for $t$ in a sufficiently small neighborhood of $t_\ell$, $\lambda_k(t) = \eta_i(t)$ for $t < t_\ell$ and $\lambda_k(t) = \mu_j(t)$ for $t > t_\ell$; or,
        \item the same as the above, with the roles of $\eta_i$ and $\mu_j$ switched: there are $\eta_i$ (positive slope at $t_\ell$) and $\mu_j$ (negative slope at $t_\ell$) such that, for $t$ in a sufficiently small neighborhood of $t_\ell$, $\lambda_k(t) = \mu_i(t)$ for $t < t_\ell$ and $\lambda_k(t) = \eta_j(t)$ for $t > t_\ell$.
    \end{enumerate}
    The collection $\{(t_\ell,h_\ell)\}$ can be decomposed into four disjoint (possibly empty) multisets, according to the four possibilities described above. We will show that for each of these multisets, there is an injective map into the multiset of local maxima for $\lambda$. This means that, at each critical point, the number of times that it is a local maximum must be greater than or equal to the number of times that it is a local minimum; that is, its degree is non-negative. 

    Consider the sub-multiset of local minima arising as in case 1 (local minima of $\eta$). Since $\eta$ is itself a landscape sequence, there must be an injective map into the multiset of local maxima of $\eta$. We now observe that each such local maximum of $\eta$ (i.e., those that are at a local minimum of $\lambda$) injectively maps into a local maximum of $\lambda$. Indeed, let $m$ be the maximum index such that $\lambda_m(t_\ell) = h_\ell$. It is not hard to show that $\lambda_m$ forms a local maximum at $(t_\ell,h_\ell)$. Moreover, if $\eta$ has another such local maximum, then we can let $m_2$ be the second-largest index such that $\lambda_{m_2}(t_\ell) = h_\ell$; this too forms a local maximum at $(t_\ell,h_\ell)$, verifying injectivity. So we have constructed the desired injective map. The map is constructed similarly for case 2.

    For each local minimum $(t_\ell,h_\ell)$ arising as in case 3, there is an associated local maximum described as follows (using the same indices from the description of case 3 above). For some $m > k$, there is a local maximum of $\lambda_m$ at $(t_\ell,h_\ell)$ where, for $t$ in a sufficiently small neighborhood of $t_\ell$, $\lambda_m(t) = \mu_j(t)$ for $t < t_\ell$ and $\lambda_m(t) = \eta_i(t)$ for $t > t_\ell$. This correspondence between local minima and associated local maxima gives the desired injective map, and case 4 is similar.
\end{proof}

Proposition \ref{prop: Degree nonzero prop} says that direct sum gives a well-defined binary operation on the space of landscape sequences. Next, we claim that any landscape sequence $\lambda$ can be expressed as a direct sum of tent functions, which are the simplest landscape sequences. To clarify this statement, let $b < d$ with associated tent function $f_{(b,d)}$ (see \eqref{eqn:tent_function}). By abuse of notation we denote the \textbf{associated landscape sequence} by $f_{(b,d)}$, where this really means the sequence of functions
\[
(f_{(b,d)},0,0,\ldots). 
\]

\begin{thm}[Landscape Decomposition Theorem]\label{thm: Landscape Decomposition Theorem}
    Let $\lambda$ be a landscape sequence. There exists a finite multiset of tent functions $\{ f_{(b_i,d_i)} \}$ such that 
    \[
    \lambda = \bigoplus_{i} f_{(b_i,d_i)},
    \]
    where each summand $f_{(b_i,d_i)}$ appears with multiplicity
    \[
    \mathrm{deg}_{\lambda}\bigg(\frac{b_i+d_i}{2} , \frac{d_i - b_i}{2}\bigg)>0.
    \]
    Moreover, this decomposition is unique up to ordering of summands.
\end{thm}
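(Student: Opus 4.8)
The plan is to induct on the "total size" of a landscape sequence $\lambda$, measured by $N(\lambda) \coloneqq \sum_{k} (\text{number of local maxima of } \lambda_k)$, which is finite by the finite-support and piecewise-linear hypotheses. If $N(\lambda) = 0$ then every $\lambda_k \equiv 0$ and $\lambda$ is the empty direct sum, so assume $N(\lambda) > 0$. Among all critical points $(t,h)$ of $\lambda$ with $h > 0$ that occur as a local maximum of some $\lambda_k$, I would choose one, say $(t_0,h_0)$, that is maximal in the partial order where $(t,h) \preceq (t',h')$ means the corresponding "tent" $f_{(b',d')}$ contains $f_{(b,d)}$ as a subset — equivalently, I'd pick a local max whose height $h_0$ is as large as possible, breaking ties arbitrarily. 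Writing $b_0 = t_0 - h_0$, $d_0 = t_0 + h_0$, the claim is that subtracting off one copy of $f_{(b_0,d_0)}$ — in the appropriate sense of "the inverse of $\oplus$" — yields a landscape sequence $\lambda'$ with $N(\lambda') = N(\lambda) - 1$ and $\lambda = f_{(b_0,d_0)} \oplus \lambda'$, after which induction finishes the existence claim. For the multiplicity statement, one observes that $\mathrm{deg}_\lambda\big(\tfrac{b_i+d_i}{2}, \tfrac{d_i-b_i}{2}\big)$ counts exactly how many of the summand tents $f_{(b_j,d_j)}$ satisfy $(b_j,d_j) \supseteq (b_i,d_i)$ with the center height equal — by the additivity of $\mathrm{deg}$ under $\oplus$ (a tent $f_{(b,d)}$ contributes $+1$ to the degree at its own apex and $0$ elsewhere above the diagonal, and $\mathrm{deg}_{\eta \oplus \mu} = \mathrm{deg}_\eta + \mathrm{deg}_\mu$, which I would extract from the kmax description just as in the proof of Proposition \ref{prop: Degree nonzero prop}).

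The key technical step — and the main obstacle — is making precise the "subtraction" operation and verifying that $\lambda'$ is again a landscape sequence with $\lambda = f_{(b_0,d_0)} \oplus \lambda'$. Concretely, I would define $\lambda'_k$ as follows: at each point $t$, the values $\{\lambda_1(t) \geq \lambda_2(t) \geq \cdots\}$ form a non-increasing list; I want to delete from this list, at each $t$, the contribution of one copy of the tent $f_{(b_0,d_0)}$ — i.e. delete the topmost occurrence of the value $f_{(b_0,d_0)}(t)$ whenever $t \in (b_0,d_0)$ and that value actually appears, re-sort, and pad with zeros. The content to prove is: (i) the resulting functions $\lambda'_k$ are still landscape curves (the nontrivial part being piecewise-linearity with slopes $\pm 1$ and the right critical-point structure near $b_0$, $d_0$, and $t_0$); (ii) monotonicity $\lambda'_k \geq \lambda'_{k+1}$ is preserved; (iii) the non-negative-degree condition still holds; and (iv) reconstituting, $\lambda = f_{(b_0,d_0)} \oplus \lambda'$ — this last is essentially the statement that inserting the tent's value back into the list and re-sorting recovers the original list, which is a kmax bookkeeping identity. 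The maximality of $(t_0,h_0)$ is exactly what guarantees that near the apex $t_0$ the sequence $\lambda$ genuinely "has a tent $f_{(b_0,d_0)}$ sitting on top" that can be cleanly removed without destroying the landscape-curve property of the lower functions — if one picked a non-maximal local max, the removal could create a slope-$0$ or otherwise illegal segment.

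For the uniqueness clause, suppose $\bigoplus_i f_{(b_i,d_i)} = \bigoplus_j f_{(b'_j,d'_j)}$. Using $\mathrm{deg}_{\bigoplus_i f_{(b_i,d_i)}}\big(\tfrac{b+d}{2},\tfrac{d-b}{2}\big) = |\{ i : (b_i,d_i) = (b,d) \}|$ — which follows because a tent contributes to the degree only at its own apex, so there is no "masking" across distinct tents — the degree of the common landscape sequence at the point $\big(\tfrac{b+d}{2},\tfrac{d-b}{2}\big)$ simultaneously equals the multiplicity of $(b,d)$ in the first multiset and in the second. Hence the two multisets agree, which is the asserted uniqueness up to ordering. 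I'd present this as a short corollary of the degree computation, so the only real work in the whole theorem is step (i)–(iv) of the subtraction lemma above; everything else is bookkeeping with the kmax function and the additivity of $\mathrm{deg}$ under $\oplus$.
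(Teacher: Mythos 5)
Your proposal follows the same high-level strategy as the paper: induct on a count of local maxima, peel off one tent function via a "subtraction" construction, and verify the remainder is again a landscape sequence. The paper's Lemma~\ref{lem: Landscape Decomposition Theorem Lemma} plays exactly the role of your "subtraction lemma," with a slightly different choice of tent (the left-most local maximum of $\lambda_1$ rather than a tallest one) and an index-shifting description of $\eta$ (i.e., $\eta_k(t)=\lambda_{k+1}(t)$ for $t<x_{k+1}$, else $\lambda_k(t)$) that makes piecewise-linearity visible by construction. Your pointwise "delete the tent value and re-sort" description is equivalent to the paper's formula when the tent value does appear in the sorted list at every $t$, but you never verify this — and that is precisely where the substance lies. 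If, at some $t\in(b_0,d_0)$, one had $\lambda_j(t)>f_{(b_0,d_0)}(t)>\lambda_{j+1}(t)$, your deletion would be a no-op and the reconstitution identity $\lambda=f_{(b_0,d_0)}\oplus\lambda'$ would fail there. Ruling this out (and establishing the non-negative-degree condition for $\lambda'$, your item~(iii)) requires a careful handoff argument using the non-negative-degree hypothesis on $\lambda$ together with the slope-$\pm1$ Lipschitz structure; this is the bulk of the paper's proof of Lemma~\ref{lem: Landscape Decomposition Theorem Lemma} and is only gestured at in your proposal. The paper's specific choice of the \emph{left-most} local maximum of $\lambda_1$ makes the left leg of the tent trivially coincide with $\lambda_1$ (since $\lambda_1$ rises monotonically with slope $+1$ from the support boundary to its first local maximum), so only the downward leg needs tracking; your "tallest" choice would force you to run the handoff argument on both sides, which is slightly more work.

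Two smaller remarks. First, your claim that $N(\lambda')=N(\lambda)-1$ is not correct in general: removing a tent can eliminate several local extrema at once (for instance $f_{(0,2)}\oplus f_{(1,3)}$ has three local maxima total, but removing either tent leaves a single tent with one). Strict decrease, which is what the paper proves, is all the induction needs. Second, your additivity observation — that $\operatorname{deg}_{\eta\oplus\mu}(t,h)=\operatorname{deg}_\eta(t,h)+\operatorname{deg}_\mu(t,h)$, hence $\operatorname{deg}$ at the apex reads off the multiplicity of each tent — is a genuinely nice touch that the paper does not prove (Proposition~\ref{prop: Degree nonzero prop} establishes only non-negativity) and that would make the multiplicity and uniqueness claims of the theorem cleaner. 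The additivity does hold: if one writes $\operatorname{deg}_\lambda(t,h)$ as the difference between the number of landscape curves with left-slope $+1$ at $(t,h)$ and the number with right-slope $+1$, each of these one-sided counts is additive under $\oplus$ because sorting preserves the multiset of one-sided slopes at a common value. Including this lemma would be a worthwhile improvement if you wrote up the argument in full.
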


Before we prove Theorem \ref{thm: Landscape Decomposition Theorem}, we provide a Lemma observing the effect of removing a particular tent function from a landscape sequence. In the lemma, and through the rest of the paper, for a landscape curve $\lambda_i$, we use $D^- \lambda_i(t)$ (respectively, $D^+ \lambda_i(t)$) to denote the \textbf{derivative of $\lambda_i$ from the left} (respectively \textbf{right}) at the point $t$; this is well-defined at every point for a landscape curve.

\begin{lem}\label{lem: Landscape Decomposition Theorem Lemma}
    Let $\lambda$ be a landscape sequence, and assume that $(x_1,y_1)$ is the left-most point where $\lambda_1$ attains a local maximum. Consider points
    \[
    x_1 \leq x_2 \leq x_3 \leq \dots \leq x_n 
    \]
    where each $x_i$, with $1\leq i \leq n $, is paired with $y_i := \lambda_i(x_i)$ such that\footnote{These are the points in $\lambda$ that intersect the downward leg of the tent function $f_{(x_1-y_1,x_1+y_1)}$. It may happen that the only such point is $(x_1,y_1)$, corresponding to the case $n=1$. See Figure \ref{fig:removing_tent_from_landscape_sequence}.}
    \[
        f_{(x_1-y_1,x_1+y_1)}(x_i) = \lambda_{i}(x_i)= y_i \quad \mbox{and} \quad D^{-}\lambda_i(x_i) = +1.
    \]
    Moreover, let $x_{j} = x_1 + y_1$ for $j \geq n+1$. Define the following sequence of functions $\eta$ as follows
    \begin{align*}
    \eta_k(t) = 
    \begin{cases}
        \lambda_{k+1}(t) & \text{if } t < x_{k+1} \\
        \lambda_{k}(t) & \text{if } x_{k+1}\leq t
        \end{cases}
    \end{align*}
    for each $k\in \N$. Then: 
    \begin{enumerate}
        \item $\eta$ is a landscape sequence.
        \item $\eta$ has strictly fewer local maxima than $\lambda$.
        \item $\lambda = f_{(x_1-y_1,x_1+y_1)} \oplus \eta$.
    \end{enumerate}
\end{lem}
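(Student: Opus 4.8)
The strategy is to directly analyze how the ``peeling off'' construction $\lambda \mapsto \eta$ alters the critical points of the landscape sequence, treating the three claims in order since each relies on the previous. Throughout, write $b := x_1 - y_1$, $d := x_1 + y_1$, $f := f_{(b,d)}$ for brevity, and recall that $(x_1,y_1)$, being the leftmost local maximum of $\lambda_1$, forces $D^-\lambda_1(t) = +1$ for all $t$ in the support of $\lambda_1$ with $t < x_1$, so in particular $\lambda_1$ agrees with the upward leg $t \mapsto t - b$ on $(b, x_1]$. The key observation I would establish first is a ``staircase'' picture: on the interval $[x_i, x_{i+1})$ one has $\lambda_i(t) \geq f(t)$ and the functions $\lambda_1 \geq \lambda_2 \geq \cdots$ interlace the downward leg of $f$ exactly as recorded by the conditions $\lambda_i(x_i) = f(x_i) = y_i$, $D^-\lambda_i(x_i) = +1$. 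This is the geometric content of the footnote's Figure, and once it is written down carefully the rest is bookkeeping.

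\textbf{Claim (3), $\lambda = f \oplus \eta$.} I would prove this first, because it is the cleanest and it essentially forces Claim (1). By Definition \ref{defn: Direct sum operation}, $(f \oplus \eta)_k(t) = k\mathrm{max}\{f(t), \eta_1(t), \dots, \eta_k(t)\}$ (since $f_j = 0$ for $j \geq 2$). Fix $t$ and split into cases according to where $t$ lies relative to the $x_j$'s. For $t \notin (b,d)$, $f(t) = 0$ and one checks $\eta_k(t) = \lambda_k(t)$ for all $k$ essentially by the definition of $\eta$ together with the fact that outside the support of $f$ we are in the regime $x_{k+1} \leq t$ for the relevant indices (or $t$ is below $x_1$, where $\lambda_1$ coincides with $f$ anyway — care is needed here and is where the leftmost-maximum hypothesis is used). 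For $t \in (b,d)$: if $x_m \leq t < x_{m+1}$ for some $m \geq 1$, then $\eta_k(t) = \lambda_{k+1}(t)$ for $k < m$ and $\eta_k(t) = \lambda_k(t)$ for $k \geq m$, so $\{f(t)\} \cup \{\eta_1(t),\dots,\eta_k(t)\}$ equals $\{f(t), \lambda_2(t), \dots, \lambda_{k+1}(t)\} \cup (\text{possibly } \lambda_1(t) \text{ if } m \leq k)$; invoking the staircase picture, $f(t)$ slots into the sorted list of the $\lambda_i(t)$'s at precisely position $m$ (this is exactly what $\lambda_m(x_m) = f(x_m)$, $D^-\lambda_m(x_m)=+1$, and monotonicity guarantee), so the $k$-th largest of the augmented list is $\lambda_k(t)$. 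Hence $(f\oplus\eta)_k = \lambda_k$. The case $b < t < x_1$ is handled separately using $\lambda_1(t) = t - b = f(t)$.

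\textbf{Claims (1) and (2).} For Claim (1), Properties 1--3 of Definition \ref{def: landscape sequence} for $\eta$ follow from the corresponding properties of $\lambda$ together with continuity of the pasting at $t = x_{k+1}$, which holds because $\lambda_{k+1}(x_{k+1}) = \lambda_k(x_{k+1})$ — this equality must be extracted from the staircase configuration, and I expect it to need a short argument. Property 4 (non-negative degree) is then immediate from Claim (3): since $\lambda = f \oplus \eta$ is a landscape sequence by Proposition \ref{prop: Degree nonzero prop}, and $f$ is a landscape sequence, one can either cite an inverse-monotonicity of $\oplus$ or — more robustly — argue directly that any local minimum of $\eta$ at some $(t,h)$ with $h>0$ gives rise to a local minimum of $\lambda = f\oplus\eta$ of no smaller multiplicity while the matching local maxima transfer back, forcing $\deg_\eta \geq 0$. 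For Claim (2), I would show the multiset of local maxima of $\eta$ injects into that of $\lambda$ \emph{and} that this injection misses the point $(x_1, y_1)$: the leftmost maximum of $\lambda_1$ is genuinely destroyed because $\eta_1$ on a neighborhood of $x_1$ equals $\lambda_2$ to the left of $x_2 \geq x_1$ and $\lambda_1$ to the right, and by construction this pasted function no longer has a local max at $x_1$ (its left derivative there is that of $\lambda_2$ or $\lambda_1$, both $+1$ by the staircase conditions, and its right derivative is $-1$ only if — and this is the point — it would already have contributed, but the height strictly drops). Counting multiplicities via $\deg$, the total number of local maxima strictly decreases.

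\textbf{Main obstacle.} The crux is establishing the staircase/interlacing picture rigorously — i.e., that the conditions $f(x_i) = \lambda_i(x_i) = y_i$ and $D^-\lambda_i(x_i) = +1$, together with monotonicity and the leftmost-maximum hypothesis, pin down the mutual configuration of $f$ and the $\lambda_i$ on all of $(b,d)$ tightly enough that ``$f(t)$ occupies rank $m$ in the sorted list of $\lambda_i(t)$ on $[x_m, x_{m+1})$'' holds with no exceptions. Edge cases (several $x_i$ coinciding, so $\deg > 1$ at a point; the tail indices $x_j = d$; behavior exactly at the $x_i$ themselves where one-sided derivatives matter) are where the argument is most delicate, and I would isolate the interlacing statement as an internal sub-lemma before touching any of the three numbered claims.
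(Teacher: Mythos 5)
The critical gap is in your derivation of Property~4 (non-negative degree) for $\eta$. You claim it is ``immediate from Claim~(3)'' because $\lambda = f\oplus\eta$ is a landscape sequence, but this does not follow from Proposition~\ref{prop: Degree nonzero prop}: that proposition runs in the wrong direction (it says that a direct sum of \emph{two landscape sequences} is a landscape sequence, so invoking it here would require already knowing $\eta$ is a landscape sequence, which is circular). What you would actually need is the converse/cancellation statement --- if $\mu$ and $\mu\oplus\nu$ are both landscape sequences, then $\nu$ has non-negative degree --- and that is neither stated nor proved in the paper, and is not obviously true, since the $k\mathrm{max}$ operation creates and destroys critical points of the summands (this is precisely what cases~3 and~4 in the proof of Proposition~\ref{prop: Degree nonzero prop} are about). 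Your ``more robustly, argue directly\dots'' fallback is a one-sentence sketch that does not establish the required injection between local minima and local maxima of $\eta$; and that injection is in fact the hardest part of the lemma. The paper proves it by a direct, somewhat delicate case analysis (first showing that a local minimum of $\eta_k$ cannot occur exactly at $x_{k+1}$, then splitting into the cases $t_\ell < x_{k+1}$ and $t_\ell > x_{k+1}$, and tracking how the pairing between $\lambda_{k+1}$-minima and $\lambda_m$-maxima descends to $\eta$), and this does not piggy-back on Claim~(3) at all. You have also misidentified the crux: the ``staircase/interlacing'' picture you flag as the main obstacle is really only what is needed for Claim~(3), which you handle correctly and in essentially the same way as the paper; the genuine difficulty is the degree count for $\eta$.

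A secondary issue: your argument for Claim~(2) appeals to derivative and height behaviour near $x_1$ without handling the case $x_1 = x_2$ cleanly, and invokes ``counting multiplicities via $\deg$'' for something the paper resolves more simply by observing that $\lambda_1(x_1)=y_1$ but $\eta_k(x_1)\neq y_1$ for every $k$, so $(x_1,y_1)$ is a local maximum of $\lambda$ but of no $\eta_k$. Your Claim~(3) argument and the reduction of Properties~1--3 of $\eta$ to continuity of the pasting at $x_{k+1}$ are sound and match the paper's treatment.
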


\begin{proof}
    \noindent\textbf{Item 1.} We begin by proving the first assertion of the lemma.
    It is rather straightforward to verify that $\eta$ satisfies Properties $1$,$2$ and $3$ in Definition \ref{def: landscape sequence}. We will focus on verifying Property $4$.

    Let $(t_\ell,h_\ell)$ be a critical point in $\eta$, with $h_\ell >0$. To prove that the degree at this point is non-negative, we aim to show that each $\eta_k$ that attains a local minimum at $(t_\ell,h_\ell)$ is uniquely paired with some $\eta_j$ that attains a local maximum at $(t_\ell,h_\ell)$. We begin with the following observation:
    \paragraph{Claim:} If $\eta_k$ attains a local minimum at $(t_\ell,h_\ell)$, then either $t_\ell < x_{k+1}$ or $t_\ell > x_{k+1}$; in other words, $t_\ell \neq x_{k+1}$.

    To prove the claim, we argue by contradiction. Suppose that $t_\ell = x_{k+1}$. Then, for $t^-$ sufficiently close to $t_\ell = x_{k+1}$ with $t^-<t_\ell$ , we have $\eta_k(t^-) = \lambda_{k+1}(t^-)$. Since $\eta_k$ attains a local minimum at $(t_\ell,h_\ell)$, this implies that $\lambda_{k+1}$ must have negative slope near such $t^-$. 
    
    Now, consider two possibilities: either $x_{k+1} < x_{n+1}$ or $x_{k+1} = x_{n+1} = x_1 + y_1$. If $x_{k+1}<x_{n+1}$, then by definition of $x_{k+1}$, $\lambda_{k+1}$ must have positive slope for $t^- < x_{k+1} = t_\ell$ sufficiently close to $t_\ell$. This contradicts the earlier conclusion that $\lambda_{k+1}$ has negative slope. Hence, we must have $x_{k+1} = x_{n+1}$.

    In this case, since $\eta_k$ is a local minimum curve and satisfies $\eta_k(t^{-}) = \lambda_{k+1}(t^{-})$ for $t^{-}<x_{k+1}$, the slope of $\lambda_{k+1}$ near $t^{-}$ must again be negative. But this would imply that $x_{k+1}<x_{n+1}$, since $\lambda_{k+1}$ would then be forced to intersect the tent function $f_{(x_1-y_1,x_1+y_1)}$ at a point earlier than $x_{n+1}$. This contradicts the earlier established fact that $x_{k+1} = x_{n+1}$.
    
    Therefore, both possibilities, $x_{k+1} < x_{n+1}$ or $x_{k+1} = x_{n+1}$, lead to contradictions. We conclude that $t_\ell \neq x_{k+1}$; i.e., either $t_\ell<x_{k+1}$ or $x_{k+1}<t_\ell$. This proves the claim. We now study these two cases. We first show that $\eta_k$ is paired with some $\eta_j$ that attains a local maximum at $(t_\ell,h_\ell)$; we then argue that this pairing is unique.
    
    \paragraph{Case 1:} Suppose $t_\ell < x_{k+1}$. Then, $\eta_k = \lambda_{k+1}$ in a sufficiently small neighborhood of $t_\ell$. This equality implies that $(t_\ell,h_\ell)$ is a local minimum point of $\lambda_{k+1}$. Since $\lambda$ has non-negative degree, $\lambda_{k+1}$ can be uniquely paired with a local maximum curve $\lambda_m$ at $(t_\ell,h_\ell)$ for some $m > k+1$. Moreover, since $t_\ell<x_{k+1} \leq x_m$, we have $\eta_{m-1} = \lambda_m$ in a sufficiently small neighborhood of $t_\ell$. Therefore, $\eta_k$ is paired with the local maximum curve $\eta_{m-1}$ at $(t_\ell,h_\ell)$, in this case.
    
    \paragraph{Case 2:}Suppose that $t_\ell > x_{k+1}$. In this case, we can find a small neighborhood of $t_\ell$ such that $\eta_k = \lambda_k$, which implies that $(t_\ell,h_\ell)$ is a local minimum of $\lambda_k$. Since $\lambda$ has non-negative degree, there exists a unique $m>k$ such that $(t_\ell,h_\ell)$ is a local maximum of $\lambda_m$. To show that $\lambda_m$ corresponds to some $\eta_j$, we first observe the following fact: $f_{(x_1 -y_1,x_1+y_1)}(t_\ell) \leq h_\ell$. Indeed, if $f_{(x_1 -y_1,x_1+y_1)}(t_\ell) > h_\ell$, it follows that
    \[
    \lambda_{k+1}(t_\ell) \geq f_{(x_1 -y_1,x_1+y_1)}(t_\ell)> h_\ell = \lambda_k(t_\ell),
    \]
    which is impossible since $\lambda_k \geq \lambda_{k+1}$ always. Therefore, $f_{(x_1 -y_1,x_1+y_1)}(t_\ell) \leq h_\ell$. This observation implies that $x_m \leq t_\ell$ (which can also be shown by contradiction similarly). Now, if $x_m < t_\ell$, then $\eta_m = \lambda_m$ sufficiently near $t_\ell$. Otherwise, if $x_m = t_\ell$, we have
    \begin{align*}
            \eta_{m-1}(t^{-}) &= \lambda_m(t^{-}) \text{ for } t^{-}<t_\ell \text{ sufficiently near } t_\ell \text{, and }\\
            \eta_m(t^{+}) &= \lambda_m(t^{+}) \text{ for } t^{+} \geq t_\ell \text{ sufficiently near } t_\ell.
    \end{align*}
    Since $\eta_{m-1} \geq \eta_m$, this forces $\eta_{m-1}(t^{-}) = \eta_m(t^{-})$ for $t^{-}<t_\ell$ near $t_\ell$, and thus $\eta_m(t) = \lambda_{m}(t)$ for all $t$ in a sufficiently small neighborhood of $t_\ell$. Hence, $\eta_k$ is paired with $\eta_m$ in this case.

    Having established that each $\eta_k$ pairs with some $\eta_j$, we proceed to show that this pairing is unique. Suppose that $\eta_k$ and $\eta_{k'}$, with $k \neq k'$, both attain a local minimum at $(t_\ell,h_\ell)$. Using arguments similar to those above, one can show that both $x_{k+1}$ and $x_{k'+1}$ must lie on the same side of $t_\ell$; that is, either $t_\ell < x_{k+1},x_{k'+1}$ or $t_\ell > x_{k+1},x_{k'+1}$. Assuming the former, it is easy to see that $\eta_k$ and $\eta_{k'}$ are paired with local maximum curves $\eta_m$ and $\eta_{m'}$ for some distinct $m\neq m'$. The latter case follows similarly. In conclusion, we have $\mathrm{deg}_\eta(t_\ell,h_\ell) \geq 0$, and thus $\eta$ is a landscape sequence.
    
    \paragraph{Item 2.} We claim that $\eta$ admits strictly fewer local maxima than $\lambda$. By construction, every local maximum of $\eta$ is a local maximum of $\lambda$. Moreover, since $\lambda_1(t_1) = h_1 \neq \eta_k(t_1)$ for any $k$, the point $(t_1,h_1)$ is a local maximum of $\lambda$, but not of $\eta$. Therefore, $\eta$ must have strictly fewer local maxima than $\lambda$, and hence at most $M-1$ in total. 

    \paragraph{Item 3.} We aim to show that $\lambda = f_{(x_1-y_1,x_1 + y_1)}\oplus \eta $. For notational simplicity, let us write $f = f_{(x_1-y_1,x_1+y_1)}$ and $\tau = f \oplus \eta $. We claim that $\tau_k(t) = \lambda_k(t)$ for any $k\in \N,t\in \R$. Fix $t\in \R$. There are three cases to consider: either $t<x_1$, or $x_{n+1} \leq t$, or $x_{i} \leq t < x_{i+1}$ for some $1 \leq i \leq n$. In each case, the key is to determine whether $\tau_k(t)$ is realized by the first summand $f$ or the second $\eta$. We present only the last case, as the others follow similarly. So, assume that $x_i \leq t < x_{i+1}$ for some $1 \leq i \leq n$. Then:
    \begin{align*}
        f(t) &= \lambda_i(t), \\
        \eta_{k}(t) &= \lambda_k(t) \text{ for } 1\leq k \leq i-1, \text{ and } \\
        \eta_{k'}(t) &= \lambda_{k'+1}(t)\text{ for } i \leq k'.
    \end{align*}
    This gives the chain of inequalities:
    \begin{align*}
        \eta_1(t) = \lambda_1(t)  \geq \dots \geq \eta_{i-1}(t) = \lambda_{i-1}(t) \geq f(t) = \lambda_{i}(t) \geq \eta_i(t) =  \lambda_{i+1}(t) \geq \dots
    \end{align*}
    Hence, $\tau_k(t) = \lambda_k(t)$ as desired.
\end{proof}

With Lemma \ref{lem: Landscape Decomposition Theorem Lemma} established, we can now prove the Landscape Decomposition Theorem \ref{thm: Landscape Decomposition Theorem}.
\begin{proof}[Proof of Theorem \ref{thm: Landscape Decomposition Theorem}]
    The uniqueness claim is straightforward. Indeed, given a  landscape sequence of the form $f_{(b_1,d_1)} \oplus f_{(b_2,d_2)} \oplus \cdots \oplus f_{(b_N,d_N)}$, observe that the points $(b_i,d_i)$ are exactly the local maxima with strictly positive degree. As the multiset of such points is an intrinsic property of a landscape sequence, any two decompositions can only differ by reordering summands.
    
    It remains to prove the existence of a direct sum decomposition of a landscape sequence $\lambda$. Let $M$ denote the number of local maxima of $\lambda$. We argue by induction on $M$.
    
    To establish the base case, assume $M = 1$. Let $(x_1,y_1)$ be the unique local maximum of $\lambda$. It must correspond solely to the local maximum of $\lambda_1$. Moreover, we must have $\lambda_1(x_1-y_1) = 0 = \lambda_1(x_1+y_1)$; otherwise, $\lambda_1$ would attain a local minimum in the interval $[x_1-y_1,x_1+y_1]$ which would induce an additional local maximum curve in $\lambda$. Lastly, since there is only one local maximum, we must have $\lambda_k = 0$ for all $k \geq 2$. This shows that $ \lambda = f_{(x_1-y_1,x_1+y_1)}$, proving the base case.
    
    Now assume that every landscape sequence with fewer than $M-1$ local maxima admits a decomposition as in the statement of this theorem, and assume that $\lambda$ has $M$ local maxima. Let $(x_1,y_1)$ be the left-most point where $\lambda_1$ attains a local maximum.
    By Lemma \ref{lem: Landscape Decomposition Theorem Lemma}, we have a landscape sequence $\eta$ that admits strictly fewer local maxima than $\lambda$ and satisfies $\lambda = f_{(x_1-y_1,x_1 + y_1)} \oplus \eta$. Therefore, by the induction hypothesis, $\eta$ admits a decomposition, and so does $\lambda$. This concludes the proof of existence.
\end{proof}

\begin{figure}
    \centering
    \includegraphics[width=1\linewidth]{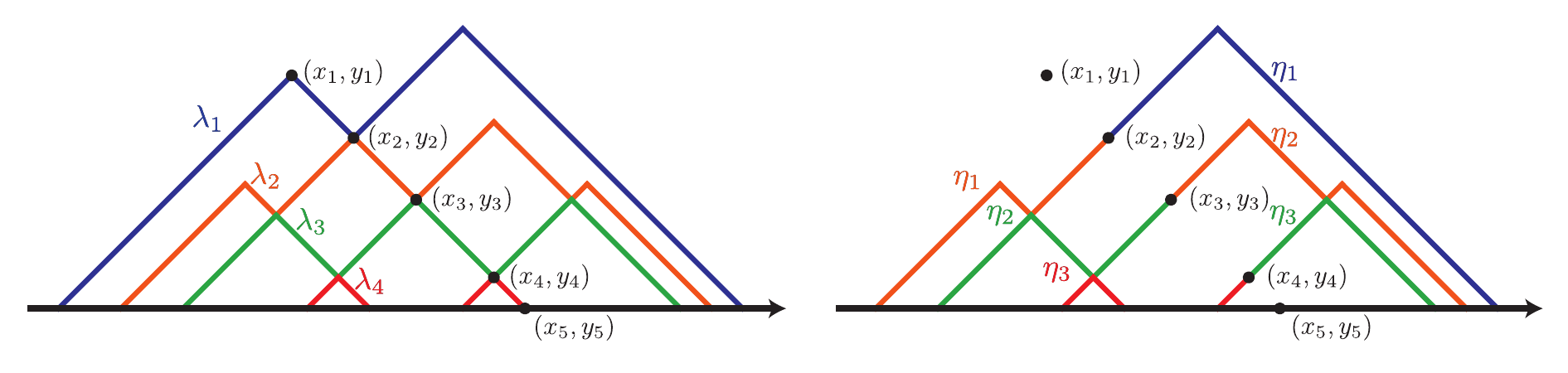}
    \caption{Example demonstrating the effect of removing a tent function from a landscape sequence, as described in Lemma \ref{lem: Landscape Decomposition Theorem Lemma}. \textbf{Left}: Original landscape sequence $\lambda$. The points $(x_i,y_i)$ as in the statement of the lemma are annotated. \textbf{Right}: The resulting landscape sequence $\eta$ obtained from $\lambda$ by removing the tent function $f_{(x_1-y_1,x_1 + y_1)}$. The colors here correspond to those of the original landscape $\lambda$, changing at the points $x_i$, indicating a transition from one landscape curve to the other, in line with the definition of $\eta$, as described in the lemma.
    }
\label{fig:removing_tent_from_landscape_sequence}
\end{figure}

Finally, we leverage the direct sum decomposition to give a proof of Theorem \ref{thm: Every landscape sequence is a persistence landscape of some persistence diagram}.

\begin{proof}[Proof of Theorem \ref{thm: Every landscape sequence is a persistence landscape of some persistence diagram}]
    Let $Y = \{ (b_i,d_i)\}_{i=1}^N$ be a persistence diagram. It is easy (by definition of $\lambda^Y$) to see that $\lambda^Y = f_{(b_1,d_1)} \oplus f_{(b_2,d_2)} \oplus \dots \oplus f_{(b_N,d_N)}$. Therefore, $\lambda^Y$ is a landscape sequence by Proposition \ref{prop: Degree nonzero prop}. 
    
    Conversely, let $\lambda$ be a landscape sequence. By Theorem \ref{thm: Landscape Decomposition Theorem},
    there exists a unique multiset of tent functions $\{f_{(b_1,d_1)}, \dots, f_{(b_N,d_N)}\}$ such that 
    \begin{align*}
        \lambda = f_{(b_1,d_1)} \oplus f_{(b_2,d_2)} \oplus \dots \oplus f_{(b_N,d_N)}.
    \end{align*}
    Now, define the persistence diagram $Y = \{(b_i, d_i)\}_{i=1}^N$ (this is well-defined, by the uniqueness part of the decomposition theorem). By construction, $\lambda^Y$ has the same decomposition as $\lambda$. Thus, $\lambda = \lambda^Y$.
\end{proof}

\subsection{Equivalence of Erosion and Landscape Distances}

In this section, we prove our main theorem, which equates erosion distance and the landscape distance \eqref{eqn:landscape_distance_background}.

\begin{thm}[Equivalence of Erosion and Landscape Distances] \label{thm: d_E = Landscape Distance}
    The landscape distance between any two persistence landscapes is equal to the erosion distance between their corresponding persistence diagrams. That is, $\| \lambda^Y - \lambda^{Y'} \|_{\infty} = d_{E}(Y,Y')$ for any $Y,Y' \in \mathsf{PDgm}$.
\end{thm}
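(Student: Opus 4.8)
The plan is to realize the landscape distance as a coflow-induced interleaving distance on the space of landscape sequences, and then construct a coflow equivariant equivalence between the poset category of persistence diagrams (equipped with the coflow $\nabla$ from Section \ref{subsec:erosion_distance}) and this new category. Once that is done, the theorem follows immediately from Proposition \ref{prop: Coflow induced interleaving distances between coflow equivalent poset categories are preserved} together with Proposition \ref{prop: The interleaving distance induced by the coflow nabla is erosion distance}, since the functor will literally be $Y \mapsto \lambda^Y$, which we already know is a bijection onto the landscape sequences by Theorem \ref{thm: Every landscape sequence is a persistence landscape of some persistence diagram}.

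Concretely, I would proceed as follows. First, equip the set $\mathsf{Land}$ of landscape sequences with the pointwise partial order $\lambda \leq \mu \iff \lambda_k(t) \leq \mu_k(t)$ for all $k,t$; one checks via the $k\mathrm{max}$ description (Proposition \ref{prop: lambda Y is just kmax of envelope functions}) together with the rank-function formula \eqref{eqn:rank_function} that $Y \leq_{\mathsf{PDgm}} Y'$ if and only if $\lambda^Y \leq \lambda^{Y'}$, so that the landscape map is a poset isomorphism. The key identity here is that $\mathrm{rank}[Y](t-h,t+h) \geq k \iff \lambda^Y_k(t) \geq h$, which is essentially the original definition of $\lambda^Y$; combined with Lemma \ref{rankY is order preserving} this gives the order-equivalence. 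Next, define a coflow $\sigma = (\sigma_\varepsilon)_{\varepsilon \geq 0}$ on $\mathsf{Land}$ by the ``shrink'' operation: translate each landscape curve inward and downward, i.e. realize $\sigma_\varepsilon$ on the level of tent summands by $f_{(b,d)} \mapsto f_{(b+\varepsilon,\, d-\varepsilon)}$ (discarding any summand with $d - b \leq 2\varepsilon$), extended to arbitrary landscape sequences via the decomposition of Theorem \ref{thm: Landscape Decomposition Theorem}; one should check this agrees with the more intrinsic description $(\sigma_\varepsilon \lambda)_k(t) = \max\{\lambda_k(t) - \varepsilon,\, 0\}$ restricted appropriately, or verify directly that it sends landscape sequences to landscape sequences. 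The coflow axioms (Definition \ref{def: Coflow}) are then routine: $\sigma_0 = \mathrm{id}$, $\sigma_\varepsilon \lambda \leq \lambda$, and $\sigma_{\varepsilon_1}\sigma_{\varepsilon_2} = \sigma_{\varepsilon_1+\varepsilon_2}$.

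The crux is the compatibility $\lambda^{\nabla_\varepsilon Y} = \sigma_\varepsilon \lambda^Y$, i.e. that the landscape map is coflow equivariant (Definition \ref{def:coflow_equivariant}). This should follow from Lemma \ref{prop 5.1}: since $\mathrm{rank}[\nabla_\varepsilon Y](b,d) = \mathrm{rank}[Y](b-\varepsilon,d+\varepsilon)$, the defining formula for landscapes gives
\[
\lambda^{\nabla_\varepsilon Y}_k(t) = \sup\{h \geq 0 \mid \mathrm{rank}[Y](t-h-\varepsilon,\, t+h+\varepsilon) \geq k\} = \max\{\lambda^Y_k(t) - \varepsilon,\, 0\},
\]
which is exactly $(\sigma_\varepsilon \lambda^Y)_k(t)$ under the intrinsic description — so I should define $\sigma_\varepsilon$ by this last formula and then the equivariance is immediate, leaving only the verification that $\sigma_\varepsilon$ preserves landscape sequences (which is where the tent-decomposition picture is the cleanest way to see it, since truncating a tent from below by $\varepsilon$ yields a shorter tent). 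Then $Y \mapsto \lambda^Y$ is a coflow equivariant equivalence $(\mathsf{PDgm},\leq,\nabla) \to (\mathsf{Land},\leq,\sigma)$, so $d_E(Y,Y') = d^{\nabla}_{\mathsf{PDgm}}(Y,Y') = d^{\sigma}_{\mathsf{Land}}(\lambda^Y,\lambda^{Y'})$, and it remains to identify the latter with $\|\lambda^Y - \lambda^{Y'}\|_\infty$. This last identification is the step I expect to require the most care: unwinding $d^\sigma$, one needs that $\sigma_\varepsilon \lambda \leq \mu$ and $\sigma_\varepsilon \mu \leq \lambda$ together are equivalent to $\|\lambda - \mu\|_\infty \leq \varepsilon$, i.e. to $|\lambda_k(t) - \mu_k(t)| \leq \varepsilon$ for all $k,t$. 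The inequality $\sigma_\varepsilon \lambda \leq \mu$ unpacks to $\max\{\lambda_k(t)-\varepsilon,0\} \leq \mu_k(t)$, which since $\mu_k(t) \geq 0$ is exactly $\lambda_k(t) - \mu_k(t) \leq \varepsilon$; symmetrically for the other inequality, giving the claim and hence $d^\sigma_{\mathsf{Land}}(\lambda,\mu) = \|\lambda-\mu\|_\infty$. The main obstacle is making sure $\sigma$ is genuinely well-defined as an endofunctor of $\mathsf{Land}$ — that the naive pointwise truncation of a landscape sequence is again a landscape sequence, in particular that the non-negative-degree condition (Property 4 of Definition \ref{def: landscape sequence}) is preserved — for which leaning on Theorem \ref{thm: Landscape Decomposition Theorem} to reduce to the behavior on single tent functions is the safest route.
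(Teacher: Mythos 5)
Your proposal is essentially the paper's proof: the coflow you define by $(\sigma_\varepsilon\lambda)_k(t) = \max\{\lambda_k(t)-\varepsilon,0\}$ is exactly the operator $T_\varepsilon$ / flow $\Lambda_\varepsilon$ the paper uses, the order-equivalence of $\mathcal{L}$ via the rank-to-height identity is Proposition \ref{prop: all morphisms preserved}, the equivariance $\lambda^{\nabla_\varepsilon Y} = \Lambda_\varepsilon\lambda^Y$ via Lemma \ref{prop 5.1} is Proposition \ref{prop: truncated landscape & its diagram}, and the conclusion via Proposition \ref{prop: Coflow induced interleaving distances between coflow equivalent poset categories are preserved} is identical. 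The only cosmetic difference is that you identify $d^\sigma_{\mathsf{Land}}$ with $\|\cdot\|_\infty$ by a direct unpacking rather than by citing the analogous argument in \cite[Theorem 3.9]{de2018theory} as the paper does, and you are in fact a bit more careful than the paper in flagging (and correctly resolving, via Theorem \ref{thm: Landscape Decomposition Theorem}) the question of whether the truncation operator preserves the non-negative-degree condition.
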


The proof follows by realizing the landscape distance as an interleaving distance with respect to a coflow and showing that the resulting structure is equivalent to the one established for persistence diagrams in Section \ref{subsec:erosion_distance}.

\begin{remark}\label{rmk:KMS_result}
    In \cite[Theorem 4.3]{kim2024interleaving}, the authors show that the erosion distance can be realized as a Hausdorff distance in the space $\R_<^2$ of intervals, endowed with $\ell^\infty$ distance. After unpacking definitions, one can show that the latter quantity is equal to landscape distance, thereby providing an alternate proof strategy for Theorem \ref{thm: d_E = Landscape Distance}. Our perspective provides a more elementary approach (indeed, \cite[Theorem 4.3]{kim2024interleaving} is an example of their rather general category-theoretic framework), while leveraging the structural results for persistence landscapes derived above. 
\end{remark}

\subsubsection{Equipping Persistence Landscapes with a Coflow}\label{subsec:coflow_on_landscapes}

 We now define the poset category of landscape sequences, and equip it with a coflow.  

\begin{defn}
    By $ \mathsf{Land} = (\mathsf{Land}, \leq )$, we denote the \textbf{poset of landscape sequences} $\lambda$ where the poset relation $\leq$ is given by $\lambda \leq \lambda' \iff \lambda_{k}(t) \leq \lambda'_{k}(t)$ for all $k\in\N,t\in \R$.
\end{defn}

Let $\varepsilon\geq 0$ and $\lambda = (\lambda_1,\lambda_2,\dots)$. Define an operator $T_{\varepsilon}$ on landscape curves by 
\begin{align*}
    T_{\varepsilon}(\lambda_k): & \R \to \R 
     && t \mapsto T_{\varepsilon}(\lambda_k)(t) =
    \begin{cases} 
        \lambda_k(t) - \varepsilon, & \text{if } \lambda_k(t) > \varepsilon \\
        0, & \text{otherwise.} 
    \end{cases}
\end{align*}

That is, \( T_{\varepsilon}(\lambda_k) \) is obtained from \( \lambda_k \) by shifting the graph of \( \lambda_k \) downward by \( \varepsilon \) units, and truncating as necessary. Observe that if $\lambda_k' \leq \lambda_k$, then $T_\varepsilon(\lambda_k) \leq T_\varepsilon(\lambda_k')$. This operation therefore gives rise to the following well-defined functor.

\begin{defn}
    For each $\varepsilon \geq 0$, define the \textbf{landscape $\varepsilon$-flow functor} $\Lambda_{\varepsilon}:\mathsf{Land} \to \mathsf{Land}$, by $\Lambda_{\varepsilon}\lambda = (T_{\varepsilon}(\lambda_1),T_{\varepsilon}(\lambda_2),\dots)$.
\end{defn}

The proof of the following is straightforward.

\begin{prop}
    The family $\Lambda = \{ \Lambda_{\varepsilon} \}_{\varepsilon \geq 0}$ is a coflow.
\end{prop}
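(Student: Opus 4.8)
The plan is to verify the three axioms of Definition \ref{def: Coflow} directly for the family $\Lambda = \{\Lambda_\varepsilon\}_{\varepsilon \geq 0}$, reducing each to the corresponding statement about the scalar truncation operator $T_\varepsilon$ acting on a single landscape curve. Before touching the axioms, though, one should observe that $\Lambda_\varepsilon$ really is a well-defined endofunctor of $\mathsf{Land}$, i.e.\ that $\Lambda_\varepsilon \lambda$ is again a landscape sequence whenever $\lambda$ is: $T_\varepsilon(\lambda_k)$ is clearly non-negative, compactly supported, continuous and piecewise-linear with slopes in $\{+1,-1\}$ on its support (truncation at height $\varepsilon$ only deletes a sublevel region and introduces no new slopes), monotonicity $T_\varepsilon(\lambda_{k+1}) \leq T_\varepsilon(\lambda_k)$ is immediate from $\lambda_{k+1} \leq \lambda_k$ and the fact that $s \mapsto \max(s-\varepsilon,0)$ is non-decreasing, finite support is inherited, and the non-negative degree condition is preserved because truncating all curves at the same height $\varepsilon$ sends local maxima/minima above height $\varepsilon$ to local maxima/minima of the same multiplicity, while those at or below height $\varepsilon$ are flattened to $0$ and contribute nothing to the degree of any point with positive second coordinate. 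Since the paper only claims "the proof is straightforward," one may also simply cite this as routine, but it is the natural first step.

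Next, Axiom (1): $\Lambda_0 = \mathcal{I}_{\mathsf{Land}}$. This is immediate, since $T_0(\lambda_k)(t) = \lambda_k(t)$ for all $t$ (the condition $\lambda_k(t) > 0$ versus $\lambda_k(t) = 0$ gives $\lambda_k(t) - 0$ in the first case and $0 = \lambda_k(t)$ in the second). For Axiom (2), that $\Lambda_\varepsilon \lambda \leq \lambda$ for all $\varepsilon \geq 0$: this reduces to the pointwise inequality $T_\varepsilon(\lambda_k)(t) \leq \lambda_k(t)$, which holds because $T_\varepsilon(\lambda_k)(t)$ equals either $\lambda_k(t) - \varepsilon \leq \lambda_k(t)$ or $0 \leq \lambda_k(t)$ (using non-negativity of $\lambda_k$). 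For Axiom (3), the semigroup property $\Lambda_{\varepsilon_1 + \varepsilon_2} = \Lambda_{\varepsilon_1} \circ \Lambda_{\varepsilon_2}$: it suffices to check $T_{\varepsilon_1 + \varepsilon_2}(\lambda_k) = T_{\varepsilon_1}(T_{\varepsilon_2}(\lambda_k))$ pointwise, which is the elementary identity $\max(s - \varepsilon_1 - \varepsilon_2, 0) = \max(\max(s - \varepsilon_2, 0) - \varepsilon_1, 0)$ for $s \geq 0$ and $\varepsilon_1, \varepsilon_2 \geq 0$. One can dispatch this by the short case analysis on whether $s \leq \varepsilon_2$, $\varepsilon_2 < s \leq \varepsilon_1 + \varepsilon_2$, or $s > \varepsilon_1 + \varepsilon_2$.

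There is no real obstacle here; the only mild subtlety — and the one place a careless write-up could slip — is the well-definedness check that $T_\varepsilon$ preserves the non-negative degree condition (Property 4 of Definition \ref{def: landscape sequence}), since the other three properties of a landscape sequence are visibly stable under pointwise downward translation and truncation. The cleanest way to handle the degree condition is to note that for a height $h > 0$, the truncation map $T_\varepsilon$ restricted to the region $\{\text{height} > \varepsilon\}$ is an affine (height-shifting) homeomorphism onto $\{\text{height} > 0\}$ preserving the local max/min structure of each curve, so $\operatorname{deg}_{\Lambda_\varepsilon \lambda}(t, h) = \operatorname{deg}_\lambda(t, h + \varepsilon) \geq 0$, while for $0 < h \leq \varepsilon$ every curve is constantly $0$ near height $h$ (or has already been flattened), so no point at that height is a local extremum of any $T_\varepsilon(\lambda_k)$ and the degree is trivially $0$.
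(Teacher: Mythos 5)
The paper omits this proof, declaring it straightforward, and your verification correctly supplies the missing details: the well-definedness check that $\Lambda_\varepsilon$ preserves the landscape-sequence axioms, the identity $\Lambda_0 = \mathcal{I}$, the contraction $\Lambda_\varepsilon\lambda \leq \lambda$, and the semigroup identity via $\max(s-\varepsilon_1-\varepsilon_2,0) = \max(\max(s-\varepsilon_2,0)-\varepsilon_1,0)$. One small remark: your final clause about the case ``$0 < h \leq \varepsilon$'' is spurious and slightly miswritten --- since any point on the graph of $T_\varepsilon(\lambda_k)$ at height $h>0$ has $\lambda_k(t) = h + \varepsilon > \varepsilon$, your first observation $\operatorname{deg}_{\Lambda_\varepsilon\lambda}(t,h) = \operatorname{deg}_\lambda(t,h+\varepsilon)$ already covers every $h>0$ and there is no residual range to handle; the extra sentence can simply be dropped without loss.
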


The interleaving distance induced by $\Lambda$ is familiar: 

\begin{prop}\label{prop:coflow_is_landscape}
    The interleaving distance induced by the coflow $\Lambda$ is equal to the landscape distance. That is,
    \[
    d^{\Lambda}_{\mathsf{Land}}(\lambda,\lambda') = \| \lambda - \lambda'\|_{\infty}
    \]
    for all $\lambda,\lambda' \in \mathsf{Land}$. 
\end{prop}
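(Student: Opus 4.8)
The plan is to unwind both sides of the claimed identity and verify they describe the same set of $\varepsilon$'s. By Definition \ref{def: Coflow}, $d^{\Lambda}_{\mathsf{Land}}(\lambda,\lambda')$ is the infimum of all $\varepsilon \geq 0$ such that $\Lambda_\varepsilon \lambda \leq \lambda'$ and $\Lambda_\varepsilon \lambda' \leq \lambda$ in the poset $\mathsf{Land}$; by definition of the order on $\mathsf{Land}$, this pair of conditions reads $T_\varepsilon(\lambda_k)(t) \leq \lambda'_k(t)$ and $T_\varepsilon(\lambda'_k)(t) \leq \lambda_k(t)$ for all $k \in \N$, $t \in \R$. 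So the whole statement reduces to the pointwise real-analytic fact: for fixed nonnegative reals $a = \lambda_k(t)$, $a' = \lambda'_k(t)$, the inequalities $T_\varepsilon(a) \leq a'$ and $T_\varepsilon(a') \leq a$ both hold if and only if $|a - a'| \leq \varepsilon$, where $T_\varepsilon(s) = \max\{s - \varepsilon, 0\}$.

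The key step is this elementary observation about the truncated shift. First I would note that $T_\varepsilon(a) \leq a'$ is equivalent to $a - \varepsilon \leq a'$ whenever $a \geq 0$: if $a \leq \varepsilon$ then $T_\varepsilon(a) = 0 \leq a'$ holds automatically and $a - \varepsilon \leq 0 \leq a'$ also holds, while if $a > \varepsilon$ then $T_\varepsilon(a) = a - \varepsilon$ and the equivalence is immediate. Thus $T_\varepsilon(a) \leq a'$ iff $a - a' \leq \varepsilon$, and symmetrically $T_\varepsilon(a') \leq a$ iff $a' - a \leq \varepsilon$; together these give $|a - a'| \leq \varepsilon$. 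Applying this with $a = \lambda_k(t)$, $a' = \lambda'_k(t)$ (both nonnegative since landscape curves are nonnegative), we conclude that $\Lambda_\varepsilon \lambda \leq \lambda'$ and $\Lambda_\varepsilon \lambda' \leq \lambda$ hold simultaneously if and only if $|\lambda_k(t) - \lambda'_k(t)| \leq \varepsilon$ for all $k, t$, i.e. if and only if $\|\lambda - \lambda'\|_\infty \leq \varepsilon$.

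Finally I would pass to the infimum: the set of $\varepsilon \geq 0$ that are "$\varepsilon$-interleaving witnesses" is exactly $\{\varepsilon \geq 0 \mid \varepsilon \geq \|\lambda - \lambda'\|_\infty\} = [\|\lambda - \lambda'\|_\infty, \infty)$, whose infimum is $\|\lambda - \lambda'\|_\infty$. Hence $d^{\Lambda}_{\mathsf{Land}}(\lambda,\lambda') = \|\lambda - \lambda'\|_\infty$. I do not anticipate a genuine obstacle here; the only mild subtlety worth spelling out is the use of nonnegativity of landscape curves to make the equivalence $T_\varepsilon(a) \leq a' \iff a - a' \leq \varepsilon$ hold without a case split on the sign of $a'$, and the observation that the supremum defining $\|\lambda - \lambda'\|_\infty$ is finite (indeed attained) because only finitely many $\lambda_k, \lambda'_k$ are nonzero and each has compact support, so the interleaving set is genuinely a closed half-line and the infimum is achieved.
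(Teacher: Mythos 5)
Your proposal is correct, and it is essentially the direct, self-contained version of the argument that the paper instead delegates: the paper's own proof is a one-line citation of \cite[Theorem~3.9]{de2018theory} (that the $L^\infty$ metric on $\R^n$ arises as an interleaving distance) together with the remark that ``small modifications'' adapt it to $\mathsf{Land}$. You supply the actual content of that adaptation, namely the pointwise equivalence $T_\varepsilon(a) \leq a' \iff a - a' \leq \varepsilon$ for $a, a' \geq 0$, which combined with its symmetric counterpart reduces the interleaving condition to $|\lambda_k(t)-\lambda'_k(t)|\leq\varepsilon$ for all $k,t$, i.e.\ $\|\lambda-\lambda'\|_\infty\leq\varepsilon$. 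This is the right calculation and the nonnegativity of landscape curves is exactly where it is needed. One small remark: your final observation that the supremum is attained is not actually required for the argument — the equivalence ``$|\lambda_k(t)-\lambda'_k(t)|\leq\varepsilon$ for all $k,t$ iff $\sup_{k,t}|\lambda_k(t)-\lambda'_k(t)|\leq\varepsilon$'' holds by the definition of supremum whether or not it is attained, and finiteness of the sup norm (which does follow from the finite-support and boundedness properties you cite) is what keeps the interleaving set nonempty. Your explicit proof is arguably preferable to the paper's for a reader who does not want to chase the reference, at the modest cost of a few extra lines.
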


\begin{proof}
    Our construction of the interleaving distance $d_{\mathsf{Land}}^{\Lambda}$ as a distance between functions $\mathbb{N} \times \R \to \R$ agrees with the construction used to prove \cite[Theorem 3.9]{de2018theory}, which shows that $L^\infty$ distance on $\R^n$ can be realized as an interleaving distance. Indeed, small modifications to that proof yield the proof in the setting of this proposition.
\end{proof}

\subsubsection{Coflow Equivalence of Persistence Landscapes and Persistence Diagrams}

Let us denote the \textbf{landscape map} $Y \mapsto \lambda^Y$ by $\mathcal{L}:\mathsf{PDgm} \to \mathsf{Land}$. Theorem \ref{thm: Every landscape sequence is a persistence landscape of some persistence diagram} shows that the landscape map is a bijection; let $\mathcal{M}:\mathsf{Land} \to \mathsf{PDgm}$ denote its inverse. In this section, we refine this result and show that $\mathcal{L}$ is a coflow equivalence between the posets $(\mathsf{PDgm}, \leq)$ and $(\mathsf{Land}, \leq)$.

\begin{prop}\label{prop: all morphisms preserved}
    Let $Y,Y' \in \mathsf{PDgm}$. Then
    \[
    Y \leq Y' \iff \lambda^{Y} \leq \lambda^{Y'}
    \]
    That is, morphisms are preserved among the posets $\mathsf{PDgm}$ and $\mathsf{Land}$, so that $\mathcal{L}$ and $\mathcal{M}$ are functors.
\end{prop}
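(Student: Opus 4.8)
The plan is to prove both implications by translating the inequality $Y \leq Y'$ (defined via rank functions) into the landscape inequality $\lambda^Y \leq \lambda^{Y'}$ (defined pointwise), using the two equivalent descriptions of the landscape already available in the excerpt: the rank-function definition $\lambda^Y(k,t) = \sup\{h \geq 0 \mid \mathrm{rank}[Y](t-h,t+h) \geq k\}$ and the $k\mathrm{max}$-of-tent-functions description of Proposition \ref{prop: lambda Y is just kmax of envelope functions}. The rank-function description is the key, because it exhibits each value $\lambda^Y_k(t)$ as a quantity that is monotone in the rank function: if $\mathrm{rank}[Y] \leq \mathrm{rank}[Y']$ pointwise on $\R^2_\leq$, then for every $k$ and $t$ the set $\{h \geq 0 \mid \mathrm{rank}[Y](t-h,t+h) \geq k\}$ is contained in $\{h \geq 0 \mid \mathrm{rank}[Y'](t-h,t+h) \geq k\}$, so the suprema satisfy $\lambda^Y_k(t) \leq \lambda^{Y'}_k(t)$. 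This gives the forward direction with essentially no work.

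For the reverse direction I would argue the contrapositive, or equivalently recover the rank function from the landscape. The cleanest route is to observe that the rank function is determined by the landscape via the formula $\mathrm{rank}[Y](b,d) = \max\{k \geq 0 \mid \lambda^Y_k\big(\tfrac{b+d}{2}\big) \geq \tfrac{d-b}{2}\}$ (with the convention that the max is $0$ if no such $k$ exists). This identity is itself a direct consequence of the rank-function definition of the landscape: writing $t = \tfrac{b+d}{2}$ and $h = \tfrac{d-b}{2}$, we have $\lambda^Y_k(t) \geq h \iff \mathrm{rank}[Y](t-h,t+h) \geq k \iff \mathrm{rank}[Y](b,d) \geq k$, where the first equivalence uses that $\lambda^Y_k(t)$ is attained (the sup is a max, since the landscape curve is continuous and the rank function is integer-valued and upper semicontinuous in the relevant sense) — alternatively one sidesteps attainment by using $\lambda^Y_k(t) \geq h \iff \mathrm{rank}[Y](t - h', t+h') \geq k$ for all $h' < h$, which still pins down $\mathrm{rank}[Y](b,d)$ since the rank is locally constant from inside. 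Granting this recovery formula, if $\lambda^Y \leq \lambda^{Y'}$ pointwise, then for every $(b,d) \in \R^2_\leq$ the set $\{k \mid \lambda^Y_k(\tfrac{b+d}{2}) \geq \tfrac{d-b}{2}\}$ is contained in the corresponding set for $Y'$, so $\mathrm{rank}[Y](b,d) \leq \mathrm{rank}[Y'](b,d)$, i.e. $Y \leq Y'$.

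Having established the equivalence $Y \leq Y' \iff \lambda^Y \leq \lambda^{Y'}$, the functoriality claim is immediate: a poset is a category with at most one morphism between any two objects, a functor between poset categories is exactly a monotone map, and the displayed biconditional says precisely that $\mathcal{L}$ and its inverse $\mathcal{M}$ (which exists and is a bijection by Theorem \ref{thm: Every landscape sequence is a persistence landscape of some persistence diagram}) are both monotone. So I would close by remarking that $\mathcal{L}$ carries the morphism $Y \leq Y'$ to $\lambda^Y \leq \lambda^{Y'}$ and $\mathcal{M}$ carries $\lambda \leq \lambda'$ to $\mathcal{M}(\lambda) \leq \mathcal{M}(\lambda')$, hence both are functors (indeed mutually inverse isomorphisms of poset categories).

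The main obstacle is the careful handling of the sup-versus-max and the boundary/critical-point behavior in the recovery formula $\mathrm{rank}[Y](b,d) = \max\{k \mid \lambda^Y_k(\tfrac{b+d}{2}) \geq \tfrac{d-b}{2}\}$ — in particular, checking that the strict inequality $b < d < d_i$ in the definition \eqref{eqn:rank_function} of the rank function matches up correctly with where the landscape curve meets or exceeds the height $\tfrac{d-b}{2}$, so that no off-by-one error creeps in at the point where a landscape curve touches the height $h$ tangentially. I expect this to be a short but slightly delicate case check, and everything else to be formal.
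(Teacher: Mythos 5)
Your forward direction is sound and is essentially the same as the paper's. The reverse direction, however, has a real gap, and it is exactly at the point you flagged as ``slightly delicate.''

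The recovery formula $\mathrm{rank}[Y](b,d) = \max\{k \geq 0 \mid \lambda^Y_k(\tfrac{b+d}{2}) \geq \tfrac{d-b}{2}\}$ is false. Take $Y = \{(0,2)\}$ and $(b,d) = (1,2)$: then $\mathrm{rank}[Y](1,2) = 0$, because the condition in \eqref{eqn:rank_function} requires $d < d_i$, i.e.\ $2 < 2$. But $\lambda^Y_1(\tfrac{3}{2}) = \tfrac{1}{2} = \tfrac{d-b}{2}$, so the right-hand side of your formula equals $1$. The auxiliary claim that the supremum defining $\lambda^Y_k(t)$ is a maximum fails in the same example (the defining set is $[0,\tfrac{1}{2})$), and your fallback claim that ``the rank is locally constant from inside'' also fails there: $\mathrm{rank}[Y](1+\delta,2-\delta) = 1$ for all small $\delta > 0$, while $\mathrm{rank}[Y](1,2) = 0$. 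All three symptoms have a common cause: the rank function \eqref{eqn:rank_function} is closed in the birth condition ($b_i \leq b$) but open in the death condition ($d < d_i$), and that asymmetry is invisible in $\lambda^Y_k(t) \geq h$.

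The fix is to exploit that when $\mathrm{rank}[Y](b,d) = k \geq 1$, the $k$ witnessing points of $Y$ all have $d_i$ \emph{strictly} greater than $d$, so one can pass to a slightly larger interval. Pick $d' \in \big(d,\min_i d_i\big)$, set $t' = \tfrac{b+d'}{2}$ and $h' = \tfrac{d'-b}{2}$; then $\mathrm{rank}[Y](b,d') \geq k$, hence $\lambda^Y_k(t') \geq h'$, hence $\lambda^{Y'}_k(t') \geq h'$. Now using the $k\mathrm{max}$ description (Proposition~\ref{prop: lambda Y is just kmax of envelope functions}), there are at least $k$ points $(b'_j,d'_j) \in Y'$ with $f_{(b'_j,d'_j)}(t') \geq h'$, which forces $b'_j \leq b$ and $d'_j \geq d' > d$, so each is counted in $\mathrm{rank}[Y'](b,d)$, giving $\mathrm{rank}[Y'](b,d) \geq k$. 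This is close in spirit to the paper's argument (which also moves from a rank identity to a landscape inequality to a rank inequality), but the paper's converse direction passes through the intermediate claim $\lambda^{Y'}_k(\tfrac{b+d}{2}) \geq \tfrac{d-b}{2} \Rightarrow \mathrm{rank}[Y'](b,d) \geq k$ without the perturbation, and that intermediate step needs the same care you need here; so your instinct that this is the delicate spot is correct, but your proposed remedies do not actually close it.
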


\begin{proof}
    First, suppose that $Y \leq Y'$. Let $k \in \mathbb{N}$ and $t \in \R$, and assume that there exists $h \geq 0$ such that $\mathrm{rank}[Y](t-h,t+h) \geq k$. Then
    \[
    k \leq \mathrm{rank}[Y](t-h,t+h) \leq \mathrm{rank}[Y'](t-h,t+h).
    \]
    Since this holds for all $h$ satisfying the condition, this shows that $\lambda_k^{Y} \leq \lambda_k^{Y'}$; if no such $h$ exists, then the desired conclusion is obvious, as $\lambda_k^{Y} = 0$. As $k$ was arbitrary, this shows that $\lambda^{Y} \leq \lambda^{Y'}$. 

    Conversely, suppose that $\lambda^{Y} \leq \lambda^{Y'}$. Let $I = (b,d) \in \R_\leq^2$ and suppose that $\mathrm{rank}[Y](I) = k$. If $k = 0$, the desired conclusion that $\mathrm{rank}[Y](I) \leq \mathrm{rank}[Y'](I)$ is obvious, so let us suppose that $k \geq 1$. Then we have 
    \[
    k = \mathrm{rank}[Y](b,d) = \mathrm{rank}[Y]\left(\frac{d+b}{2} - \frac{d-b}{2}, \frac{d+b}{2} + \frac{d-b}{2} \right). 
    \]
    This implies
    \[
    \frac{d-b}{2} \leq \lambda_k^{Y}\left(\frac{d+b}{2}\right) \leq \lambda_k^{Y'}\left(\frac{d+b}{2}\right),
    \]
    which, in turn, implies that $\mathrm{rank}[Y'](I) \geq k$. Since $I$ was arbitrary, this implies that $Y \leq Y'$. 
\end{proof}

\begin{prop} \label{prop: truncated landscape & its diagram}
    The landscape functor $\mathcal{L}:\mathsf{PDgm}\to \mathsf{Land}$ is coflow-equivariant; that is,  $\lambda^{\nabla_{\varepsilon}Y} = \Lambda_{\varepsilon}\lambda^{Y}$ for every $Y \in \mathsf{PDgm}$ and $\varepsilon \geq 0$. 
\end{prop}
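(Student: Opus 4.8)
The plan is to verify the identity $\lambda^{\nabla_\varepsilon Y} = \Lambda_\varepsilon \lambda^Y$ by comparing the two sides coordinate-by-coordinate, i.e.\ showing $\lambda^{\nabla_\varepsilon Y}_k(t) = T_\varepsilon(\lambda^Y_k)(t)$ for every $k \in \N$ and $t \in \R$. The most efficient route is to go through the \emph{rank function} characterization of landscapes from Section~\ref{subsec:persistence_landscapes_background}, namely
\[
\lambda^Y_k(t) = \sup\{h \geq 0 \mid \mathrm{rank}[Y](t-h,t+h) \geq k\},
\]
together with the key Lemma~\ref{prop 5.1}, which tells us $\mathrm{rank}[\nabla_\varepsilon Y](I) = \mathrm{rank}[Y](\mathrm{Grow}_\varepsilon(I))$ for all $I \in \R^2_\leq$. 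This latter fact is exactly what lets us transfer the ``grow by $\varepsilon$'' operation on diagrams into a shift on the interval arguments of the rank function.

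First I would fix $k$ and $t$ and compute the left-hand side: by definition and by Lemma~\ref{prop 5.1},
\[
\lambda^{\nabla_\varepsilon Y}_k(t) = \sup\{h \geq 0 \mid \mathrm{rank}[\nabla_\varepsilon Y](t-h,t+h) \geq k\} = \sup\{h \geq 0 \mid \mathrm{rank}[Y](t-h-\varepsilon,t+h+\varepsilon) \geq k\},
\]
using that $\mathrm{Grow}_\varepsilon(t-h,t+h) = (t-h-\varepsilon, t+h+\varepsilon)$. Reindexing this supremum via $h' = h + \varepsilon$ turns it into $\sup\{h' \geq \varepsilon \mid \mathrm{rank}[Y](t-h',t+h') \geq k\} - \varepsilon$, wherever this set is nonempty. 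The next step is to relate that quantity to $\lambda^Y_k(t) = \sup\{h' \geq 0 \mid \mathrm{rank}[Y](t-h',t+h') \geq k\}$: because $\mathrm{rank}[Y](t-h',t+h')$ is non-increasing in $h'$ (Lemma~\ref{rankY is order preserving}), the set $\{h' \geq 0 \mid \mathrm{rank}[Y](t-h',t+h') \geq k\}$ is a down-set (an interval containing $0$, when nonempty), so intersecting it with $[\varepsilon,\infty)$ and then taking the sup just gives $\max(\lambda^Y_k(t) - \varepsilon, \cdot)$ — more precisely, if $\lambda^Y_k(t) > \varepsilon$ the truncated sup equals $\lambda^Y_k(t) - \varepsilon$, and if $\lambda^Y_k(t) \leq \varepsilon$ the truncated set is empty (or just $\{0\}$ shifted) and the value is $0$. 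That is exactly the definition of $T_\varepsilon(\lambda^Y_k)(t)$, completing the argument.

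The main point requiring care — and the step I expect to be the only real obstacle — is handling the boundary behavior of the supremum cleanly: one must be careful about whether the defining set is open or closed, whether the supremum is attained, and the edge case $\lambda^Y_k(t) = \varepsilon$ exactly, as well as the case $k = 0$ or $\lambda^Y_k \equiv 0$ (where the set may be empty and the convention gives value $0$ on both sides). Since landscape curves are continuous and piecewise-linear with slopes $\pm 1$, the supremum $\lambda^Y_k(t)$ is in fact attained, which makes these edge cases manageable; one just needs to check that the truncation rule in the definition of $T_\varepsilon$ (strict inequality $\lambda_k(t) > \varepsilon$) matches the reindexed supremum in all regimes. Alternatively, one could avoid the rank-function route entirely and argue via Theorem~\ref{thm: Landscape Decomposition Theorem} plus Proposition~\ref{prop: all morphisms preserved}, but the direct computation above is cleaner and self-contained.
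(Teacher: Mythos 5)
Your proposal is correct and follows essentially the same route as the paper: compute $\lambda^{\nabla_\varepsilon Y}_k(t)$ from the rank-function characterization, transfer the grow operation via Lemma~\ref{prop 5.1}, reindex the supremum by $h' = h + \varepsilon$, and split on whether $\lambda^Y_k(t)$ exceeds $\varepsilon$. The paper's proof performs the same case split (with the second case argued slightly differently, showing that any admissible $h$ is forced to equal $0$), so your argument is substantively identical.
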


\begin{proof}
    Let $k\in \N$, $t\in \R$ and $\varepsilon \geq 0$ be arbitrary. We will show that $\lambda_k^{\nabla_{\varepsilon}Y}(t) = T_{\varepsilon}(\lambda_k^{Y})(t)$. First, assume that $\lambda^{Y}_k(t) - \varepsilon > 0$. Then
    \begin{align*}
        T_{\varepsilon}(\lambda_k^{Y})(t) =& \lambda_k^{Y}(t) - \varepsilon \\
        =& \sup\{ h \geq 0 \, \mid \, \mathrm{rank}[Y] (t-h,t+h)\geq k\} - \varepsilon \\
        =& \sup\{ h - \varepsilon \geq 0 \, \mid \, \mathrm{rank}[Y] (t-h,t+h)\geq k\}\\
        =& \sup\{ h' \geq 0 \, \mid \, \mathrm{rank}[Y] (t-h'-\varepsilon,t+h'+\varepsilon)\geq k\}\\
        =& \lambda_{k}^{\nabla_{\varepsilon}Y}(t)
    \end{align*}
    On the other hand, suppose that $\lambda^{Y}_k(t) \leq \varepsilon $. Note that this implies for any $h \geq 0$ such that $\mathrm{rank}[Y](t-h,t+h) \geq k$, we have $h \leq \varepsilon$. Now, let $h\geq 0$ be such that $\mathrm{rank}[\nabla_{\varepsilon}Y](t-h,t+h) \geq k$. We will show that $h$ is forced to be $0$. Indeed, by Lemma \ref{prop 5.1}, we have 
    \[
    \mathrm{rank}[Y](t-(h+\varepsilon),t+ (h+ \varepsilon))\geq k
    \]
    but as noted this means $h + \varepsilon \leq \varepsilon$. So, $h = 0$. That is, $\lambda^{\nabla_{\varepsilon}Y}_k(t) = 0 = T_{\varepsilon}(\lambda_k^Y)(t)$. 
\end{proof}

The proof of our main result, Theorem \ref{thm: d_E = Landscape Distance}, now follows easily.

\begin{proof}[Proof of Theorem \ref{thm: d_E = Landscape Distance}]
    Propositions \ref{prop: all morphisms preserved} and \ref{prop: truncated landscape & its diagram} show that the landscape map $\mathcal{L}:\mathsf{PDgm} \to \mathsf{Land}$ is a coflow equivariant equivalence (see Definition \ref{def:coflow_equivariant}). By Proposition \ref{prop: Coflow induced interleaving distances between coflow equivalent poset categories are preserved}, it follows that the interleaving distance induced by the family $\Lambda$ is equal to the erosion distance; that is, $d_{\mathsf{Land}}^{\Lambda}(\lambda^{Y},\lambda^{Y'}) = d_{E}(Y,Y')$. By Proposition \ref{prop:coflow_is_landscape}, $d_{\mathsf{Land}}^{\Lambda}(\lambda^{Y},\lambda^{Y'}) = \|\lambda^Y - \lambda^{Y'}\|_\infty$, so this completes the proof.
\end{proof}

\section{Metric Properties of the Erosion Distance} \label{section: Topological Results}

This section collects some topological results regarding the erosion distance, many of which are consequences of our main equivalence result, Theorem \ref{thm: d_E = Landscape Distance}. 

Throughout this section, we mildly abuse terminology and refer to the metric on $\mathsf{PDgm}$ which is induced by pulling back the sup norm distance under the landscape map as the \textbf{landscape distance}. We use the notation
\begin{equation}\label{eqn:landscape_distance_diagrams}
    d_L(Y,Y') \coloneqq \|\lambda^Y - \lambda^{Y'}\|_\infty.
\end{equation}
This convention is introduced to smoothen certain statements of our results.

Some of our results concern coarse embeddings of metric spaces~\cite{gromov1993asymptotic,roe2003lectures}, so we recall the basic definition here. Let $(X,d_X)$ and $(Y,d_Y)$ be metric spaces and let $f:X \to Y$ be a map. We say that $f$ is a \textbf{coarse embedding} if there exist non-decreasing maps $\rho^-,\rho^+:[0,\infty) \to [0,\infty)$, with $\lim_{t \to \infty} \rho^-(t) = \infty$, such that 
\[
\rho^-(d_X(x,x')) \leq d_Y(f(x),f(x')) \leq \rho^+(d_X(x,x'))
\]
for all $x,x' \in X$. We say that two metrics $d$ and $d'$ on the same set $X$ are \textbf{coarsely equivalent} if the identity map coarsely embeds $d$ into $d'$ and vice-versa. 

\subsection{Topological Equivalence of Persistence Diagrams Under Erosion and Bottleneck Distances} \label{Subsection: Topological Equivalence}

In this subsection, we prove that the  bottleneck distance $d_B$ (see Section \ref{subsec:bottleneck_distance}) induces the same topology on persistence diagrams as the erosion distance, $d_E$. First, note that the bottleneck distance serves as an upper bound for the erosion distance, i.e., one has $d_E(Y, Y') \leq d_B(Y, Y')$ for any pair of persistence diagrams $Y$ and $Y'$ (see \cite[Lemma 6.4]{xian2022capturing}). This implies that a bottleneck-distance open ball, $B_{d_B}(Y, r)$, lies inside an erosion-distance open ball, $B_{d_E}(Y, r)$. In other words, every $d_E$-open set $U$ is a $d_B$-open set. However, the bottleneck and erosion distances are not bi-Lipschitz equivalent. In fact, we can say something stronger:

\begin{prop}\label{prop:bottleck_and_erosion_not_coarsely_equivalent}
    The bottleneck and erosion distances are not coarsely equivalent.
\end{prop}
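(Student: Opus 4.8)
The plan is to exhibit, for each $n$, a pair of persistence diagrams $Y_n, Y_n'$ whose erosion distance stays bounded (say equal to $1$) while their bottleneck distance grows without bound. If such a family exists, then $d_B$ cannot be dominated by any non-decreasing function $\rho^+$ of $d_E$ — indeed $\rho^+(d_E(Y_n,Y_n')) = \rho^+(1)$ is constant while $d_B(Y_n,Y_n') \to \infty$ — so the identity map from $(\mathsf{PDgm},d_E)$ to $(\mathsf{PDgm},d_B)$ fails to be a coarse embedding, and hence the two metrics are not coarsely equivalent.

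The construction I would use exploits the fact that erosion distance only sees rank functions, which are insensitive to the presence of many copies of the same small-persistence feature spread far apart. Concretely, take $Y_n$ to consist of $n$ tiny intervals placed at widely separated locations, e.g. $Y_n = \{(4j, 4j+2) \mid j = 1,\dots,n\}$, and let $Y_n'$ be obtained by nudging each of these up to persistence $4$, say $Y_n' = \{(4j-1, 4j+3)\mid j=1,\dots,n\}$ (or, even simpler, compare $Y_n$ with the empty diagram after adjusting persistences so the erosion distance is controlled). Because the intervals are pairwise far apart, the rank function of each diagram is just the indicator-type sum of the individual intervals' rank functions, with no overlap; consequently $\mathrm{rank}[Y_n](b-\varepsilon,d+\varepsilon)$ and $\mathrm{rank}[Y_n'](b,d)$ can be compared interval-by-interval, and one checks $d_E(Y_n, Y_n') = 1$ independent of $n$. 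Meanwhile $d_B(Y_n, Y_n')$ is large: any bijection $\phi$ between (sub)diagrams either matches the $j$-th small interval of $Y_n$ to the $j$-th interval of $Y_n'$ at cost $1$ — but then one must also account for unmatched points — or, to take advantage of the flexibility, one is forced into matchings whose $\ell^\infty$ cost scales with the separation $4$, and deleting points costs half their persistence. A cleaner variant: let $Y_n'$ be the empty diagram and let $Y_n = \{(4j,4j+2)\}_{j=1}^n$ rescaled so $d_E(Y_n, \emptyset) = 1$ for all $n$ while the points have persistence growing like some function; but actually the simplest rigorous choice is to keep $Y_n$ fixed-persistence and observe directly that $d_E(Y_n,\emptyset) = 1$ always (since the largest persistence among points is $2$, erosion cost is $1$) whereas... no — that gives bounded $d_B$ too. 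So the two-family comparison above is the right one, and the key point is that shifting $n$ disjoint small tents up by a fixed amount costs $O(1)$ in erosion but the bottleneck distance is driven up because the ranks match only when features are paired with the "correct" far-apart partner, and I would instead engineer the example so that the optimal erosion interleaving uses long-range rank cancellations unavailable to bottleneck matchings.

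Using Theorem~\ref{thm: d_E = Landscape Distance}, there is an even more transparent route: since $d_E = d_L$, it suffices to find $Y_n, Y_n'$ with $\|\lambda^{Y_n} - \lambda^{Y_n'}\|_\infty$ bounded but $d_B(Y_n,Y_n')$ unbounded. Take $\lambda$-level diagrams built from many disjoint tents: adding more disjoint tents of a fixed height does not change the sup-norm distance between two landscapes (the sup is achieved locally), so I can make $Y_n$ and $Y_n'$ differ on $n$ far-apart "bumps" each contributing the same small $L^\infty$ discrepancy, giving $d_L(Y_n,Y_n') = $ const, while bottleneck distance, being a matching distance, accumulates or is forced to pay a large $\ell^\infty$ cost because of the spatial layout.

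The main obstacle I anticipate is not the erosion/landscape side — that is a routine computation once the diagrams are disjointly supported — but rather nailing down a clean \emph{lower} bound $d_B(Y_n,Y_n') \geq c\cdot n$ (or $\to \infty$). One must rule out the cheap matching that pairs corresponding bumps at uniformly bounded cost; the way to do this is to choose $Y_n$ and $Y_n'$ so that such a "diagonal" matching is impossible or expensive — for instance by making the bumps of $Y_n$ and $Y_n'$ live at incompatible locations so that any bijection either incurs large $\ell^\infty$ displacement (proportional to the inter-bump spacing, which I let grow) or must delete many points (paying half their persistence, which I also let grow), while simultaneously keeping the rank functions — equivalently the landscapes — uniformly close. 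Balancing these competing requirements (large bottleneck, small landscape distance, and genuine diagrams with finite points) is the delicate part; I would most likely settle it by a careful explicit family together with a short case analysis on the structure of any candidate bijection $\phi$ achieving the bottleneck infimum.
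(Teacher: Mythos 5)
Your overall strategy is exactly the paper's: produce a family $Y_n, Y_n'$ with $d_E(Y_n,Y_n')$ bounded and $d_B(Y_n,Y_n') \to \infty$, then note that no $\rho^-$ with $\rho^-(t)\to\infty$ can satisfy $\rho^-(d_B)\le d_E$. (Small quibble: non-coarse-equivalence requires defeating the \emph{lower} control map $\rho^-$ on the $(\mathsf{PDgm},d_E)\to(\mathsf{PDgm},d_B)$ side, not $\rho^+$; the paper states it that way, and your inequality argument actually is this one, just labeled with the wrong Greek letter in the first paragraph.) You also correctly note that Theorem~\ref{thm: d_E = Landscape Distance} reduces the problem to finding landscapes close in sup-norm whose diagrams are far in bottleneck.

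The genuine gap is that you never produce a family that works, and your leading heuristic points in the wrong direction. Your candidate $Y_n = \{(4j,4j+2)\}_{j=1}^n$, $Y_n' = \{(4j-1,4j+3)\}_{j=1}^n$ has $d_B(Y_n,Y_n')=1$ for every $n$ — the ``diagonal'' matching of the $j$-th point to the $j$-th point costs exactly $1$ — so bottleneck does not grow, and you half-acknowledge this. More importantly, the general principle you lean on (``many far-apart disjoint tents, each nudged slightly'') can never work: if the two diagrams consist of spatially separated features that are individually $\ell^\infty$-close, then matching corresponding features gives a uniformly cheap bottleneck matching. Disjointness of support is the \emph{wrong} mechanism. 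What actually drives bottleneck up while keeping landscape distance bounded is a \emph{nested} configuration, where many birth–death pairs sit inside one another; then the higher landscape functions $\lambda_k$ can agree even though the underlying points are combinatorially far from being matchable. This is precisely the content of Bubenik's construction in \cite[Prop.~6.1]{bubenik2020persistence}, which the paper cites to obtain $\|\lambda^{Y_n}-\lambda^{Y_n'}\|_\infty = 1$ and $d_B(Y_n,Y_n') = 2n+1$; invoking that result closes exactly the step you flag as ``the delicate part.'' As written, your proposal identifies the right reduction but does not supply the example, and the construction heuristic you give would need to be abandoned rather than refined.
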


\begin{proof}
    We will show that there is no function $\rho^-:[0,\infty) \to [0,\infty)$ with $\lim_{t \to \infty} \rho^- (t) = \infty$ such that 
    \[
    \rho^-(d_B(Y,Y')) \leq d_E(Y,Y')
    \]
    for all $Y,Y' \in \mathsf{PDgm}$.
    
    In the proof of \cite[Proposition 6.1]{bubenik2020persistence}, Bubenik constructs, for any positive integer $n$, a pair of diagrams $Y_n$ and $Y_n'$ such that 
    \[
    \|\lambda^{Y_n} - \lambda^{Y_n'}\|_\infty = 1 \quad \mbox{and} \quad d_B(Y_n,Y_n') = 2n + 1.
    \]
    By Theorem \ref{thm: bottleneck is erosion}, the first equality implies $d_E(Y_n,Y_n') = 1$. Now let $\rho^-:[0,\infty) \to [0,\infty)$ be any function with $\lim_{t \to \infty} \rho^-(t) = \infty$. Choose $n$ such that $\rho^-(2n+1) > 1$. Then 
    \[
    d_E(Y_n,Y_n') = 1 < \rho^-(d_B(Y_n,Y_n')).
    \]
    This proves our initial non-existence claim.
\end{proof}

Despite the fact that the bottleneck and erosion distances are not coarsely equivalent, we now prove that they are \emph{locally} equivalent.

\begin{thm}[Local Isometry of Bottleneck and Erosion Distances] \label{thm: bottleneck is erosion}
    The bottleneck and erosion distances on $\mathsf{PDgm}$ are locally isometric. In particular, let $Y = \{(b_i,d_i)\}_{i=1}^N$ be a persistence diagram and set 
    \begin{align*}
        r_Y \coloneqq \frac{1}{2}\min\left\{
    \lvert b_i-b_j\rvert, \lvert d_i - d_j \rvert , \frac{d_i-b_i}{2}   \, : b_i\neq b_j, d_i\neq d_j
    \right\}.
    \end{align*}
    If $Y' \in \mathsf{PDgm}$ satisfies $d_B(Y,Y') < r_Y$ or $d_E(Y,Y') < r_Y$ then $d_B(Y,Y') = d_E(Y,Y')$. 
\end{thm}

\begin{terminology} We introduce some terminology related to the definition of bottleneck distance, given in Section \ref{subsec:bottleneck_distance}.
\begin{itemize}
    \item We will refer to a triplet $(S, S', \phi)$, with $S \subset Y$, $S' \subset Y'$, and $\phi:S \to S'$ a bijection, as a \textbf{partial matching} between $Y$ and $Y'$.
    \item Given a partial matching one can interpret the quantity
    \[
    \max\left\{ \max_{(b, d) \in S} d_\infty((b, d), \phi(b, d)), \; \max_{(b, d) \notin S} \frac{(d - b)}{2}, \; \max_{(b', d') \notin S'} \frac{(d' - b')}{2} \right\}
    \]
    as the \textbf{cost} of $(S,S',\phi)$ and denote it as $\mathrm{cost}(\phi)$. 
    \item We say that a partial matching $(S,S',\phi)$ is \textbf{optimal} if it achieves the minimum cost.
    \item By $(b,d) \overset{\phi}{\longleftrightarrow} (b',d')$, we will denote that $\phi(b,d) = (b',d')$ and say that $(b,d)$ and $(b',d')$ are \textbf{matched} under $\phi$. Similarly, by $(b,d) \overset{\phi}{\longleftrightarrow} \Delta$, will denote that $(b,d)$ is matched with the diagonal $\Delta$ under $\phi$.
\end{itemize}
\end{terminology}

By choosing $r$ sufficiently small, we guarantee a minimum amount of distance among the birth coordinates; similarly, the death coordinates are also separated by a certain distance. Formally, we state this observation as a lemma below. Its proof is immediate.

\begin{lem}\label{lem: Gap between the boxes}
    Let $Y = \{(b_i,d_i)\}_{i=1}^N \in \mathsf{PDgm}$ and $r_Y$ be given as in Theorem \ref{thm: bottleneck is erosion}. Let $(b_i,d_i) \neq (b_j,d_j)$ be two distinct birth-death pairs.
    \begin{enumerate}
        \item If $b_i \neq b_j$, then $2r_Y \leq |b_i - b_j|$ 
        \item Similarly, if $d_i \neq d_j$, then $2r_Y\leq |d_i - d_j|$
    \end{enumerate}
\end{lem}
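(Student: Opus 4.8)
The plan is to read off the bound directly from the definition of $r$. Recall that
\[
r = \tfrac{1}{2}\min\left\{ \lvert b_i - b_j\rvert,\ \lvert d_i - d_j\rvert,\ \tfrac{d_i - b_i}{2} \ :\ b_i \neq b_j,\ d_i \neq d_j \right\},
\]
so $r$ is half of a minimum taken over a finite collection of positive quantities. The key observation is that this collection includes every term $\lvert b_i - b_j\rvert$ coming from a pair with $b_i \neq b_j$, and every term $\lvert d_i - d_j\rvert$ coming from a pair with $d_i \neq d_j$; moreover, since $Y$ is a finite multiset with at least one point (or, if one wishes to be careful about the degenerate case, the statement is vacuous when no such pairs exist), this set is nonempty whenever the hypothesis of the lemma applies.

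First I would fix the distinct pairs $(b_i,d_i)\neq(b_j,d_j)$ and suppose $b_i \neq b_j$. Then $\lvert b_i - b_j\rvert$ is one of the elements over which the minimum defining $2r$ is taken, hence $2r = 2 \cdot \tfrac{1}{2}\min\{\cdots\} = \min\{\cdots\} \leq \lvert b_i - b_j\rvert$, which is exactly item 1. Item 2 is identical with the roles of births and deaths interchanged: if $d_i \neq d_j$, then $\lvert d_i - d_j\rvert$ appears in the minimum, so $2r \leq \lvert d_i - d_j\rvert$.

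There is no real obstacle here; the only thing to be slightly careful about is the indexing convention in the definition of $r$ — namely confirming that the constraint ``$b_i \neq b_j, d_i \neq d_j$'' decorating the minimum is meant to be read as ``range over all pairs subject to the relevant inequality being strict,'' so that the term $\lvert b_i - b_j\rvert$ is genuinely present in the min precisely when $b_i\neq b_j$ (and likewise for deaths), rather than requiring both coordinates to differ simultaneously. Assuming that reading, the lemma is immediate. I would present it in two or three lines, simply noting in each case that the quantity in question is a member of the finite set whose minimum is $2r$.
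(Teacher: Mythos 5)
Your proof is correct and matches the paper's argument exactly: both simply observe that the quantity in question appears as one of the finitely many terms over which the minimum defining $2r$ is taken. Your parenthetical remark about the indexing convention in the definition of $r$ is the right reading (the constraints $b_i\neq b_j$ and $d_i\neq d_j$ each apply only to the term they govern, not conjunctively to all three), and the paper's own one-line proof tacitly adopts the same interpretation.
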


To prove Theorem \ref{thm: bottleneck is erosion}, we rely on the following lemma which characterizes open balls with sufficiently small radius in the space of persistence diagrams equipped with either $d_E$ or $d_B$. For an illustration of such an open ball, we refer the reader to Figure \ref{fig:open_ball_figure}.

\begin{lem}[Open Ball Lemma]\label{lem:open_ball_lemma}
    Let $Y = \{(b_i,d_i)\}_{i=1}^N \in \mathsf{PDgm}$ and let $r > 0$. We let $U(Y,r)$ denote the set of $Y' \in \mathsf{PDgm}$ satisfying the following two conditions:
    \begin{enumerate}
        \item For each $(b,d) \in Y$ with multiplicity $m$, there are exactly $m$ birth-death pairs in $Y'$ (counted with multiplicity) that lie in the $r$-unit box 
        \[
        Q \coloneqq \{(x,y)\in \rr \, \mid \, d_\infty((b,d),(x,y))<r \}
        \]
        centered at $(b,d)$.
        \item Each $(b',d') \in Y'$ lies either in one of the open boxes 
        \[
        Q_i \coloneqq \{ (x,y) \in \rr \, \mid \, d_{\infty}((b_i,d_i),(x,y)) < r \}
        \]
        for some $ 1\leq i \leq N$, or in the open band 
        \[
        B= \{(x,y) \in \rr \, \mid \frac{y-x}{2} < r\}
        \]
        above the diagonal.
    \end{enumerate}
    Let $r_Y$ be as in the statement of Theorem \ref{thm: bottleneck is erosion}. If $r \leq r_Y$ then the open balls with respect to bottleneck and erosion distances satisfy
    \[
    B_{d_B}(Y,r) = B_{d_E}(Y,r) = U(Y,r).
    \]
    In particular, $d_B$ and $d_E$ are topologically equivalent.
\end{lem}

\begin{proof}
    Let us first show that $B_{d_B}(Y,r) = U(Y,r)$. If $Y' \in B_{d_B}(Y,r)$, then there exists a matching $\phi$ of $Y$ and $Y'$ with cost less than $r < r_Y$. It is not hard to show that this forces $Y'$ to satisfy the two properties defining $U(Y,r)$, hence $Y' \in U(Y,r)$. Conversely, if $Y' \in U(Y,r)$, then one can construct a matching $\phi$ with cost less than $r$, hence $d_B(Y,Y') < r$. \\
    \indent Next we show that $B_{d_E}(Y,r) = U(Y,r)$. The easy direction begins with the assumption that $Y' \in U(Y,r)$. Then, by the above, $Y' \in B_{d_B}(Y,r)$, so $d_E(Y,Y') \leq d_B(Y,Y') < r$, and $Y' \in B_{d_E}(Y,r)$. The remainder of the proof focuses on the converse, which takes significantly more work. \\
    \indent Suppose that $d_E(Y,Y')<r$. Then there exists $\varepsilon \geq 0$ with $d_E(Y,Y') \leq \varepsilon < r$ such that the two rank conditions in Equation \ref{eqn:erosion_distance_initial} are satisfied. Let $(b,d) \in Y$ be an arbitrary pair with multiplicity $m \geq 1$. We consider the four distinct corners of the open box $Q$; that is
    \begin{itemize}
        \item top-left: $I_1 = (b-r,d+r)$,
        \item top-right: $I_2 = (b+r,d+r)$,
        \item bottom-left: $I_3 = (b-r,d-r)$,
        \item bottom-right: $I_4 = (b+r,d-r)$.
    \end{itemize}
    These give rise to four disjoint regions below:
    \begin{itemize}
        \item Region $1$: $\mathsf{Green} \coloneqq \{ (x,y) \in \rr \, \mid \, (x,y)\text{ properly contains } I_1 \}  $
        \item Region $2$: $\mathsf{Yellow} \coloneqq \{ (x,y) \in \rr \, \mid \, (x,y)\text{ properly contains } I_3, \text{ but not $I_1$} \}   $
        \item Region $3$: $\mathsf{Orange} \coloneqq \{ (x,y) \in \rr \, \mid \, (x,y)\text{ properly contains } I_2, \text{ but not } I_1 \}  $
        \item Region $4$: the $r$-unit open box $Q$.
    \end{itemize}
    An example scenario is shown in Figure \ref{fig:open_ball_figure}. Assume that $Y$ admits exactly $n_1 \geq 0$ pairs in $\mathsf{Green}$, $n_2 \geq 0$ pairs in $\mathsf{Yellow}$ and $n_3 \geq 0$ pairs in $\mathsf{Orange}$. Since $(b,d)$ has multiplicity $m$, and it properly contains $I_4$, we have $\mathrm{rank}[Y](I_4) = m +n_1 + n_2 + n_3$. Define $\mathrm{Shrink}_{\varepsilon}(x,y) \coloneqq (x+\varepsilon,y-\varepsilon)$. Since $r$ is small enough and $\varepsilon <r$, Lemma \ref{lem: Gap between the boxes} implies that
    \[
    \mathrm{rank}[\nabla_\varepsilon Y](I) = \mathrm{rank}[Y](I) = \mathrm{rank}[Y](\mathrm{Shrink}_\varepsilon I) 
    \]
    and moreover $\mathrm{Grow}_{\varepsilon}\circ \mathrm{Shrink}_{\varepsilon}(I) = I$ for $I \in \{I_1,I_2,I_3,I_4\}$. Since $\varepsilon$ satisfies the rank conditions, we have
    \[
    \mathrm{rank}[\nabla_\varepsilon Y](I) \leq \mathrm{rank}[Y'](I)\leq \mathrm{rank}[Y](\mathrm{Shrink}_{\varepsilon}(I))
    \]
    which in turn yields the equality
    \[
     \mathrm{rank}[Y'](I) = \mathrm{rank}[Y](I)
    \]
    for all $I\in \{I_1,I_2,I_3,I_4\}$. Explicitly, this means:
    \begin{itemize}
        \item $\mathrm{rank}[Y'](I_1) = \mathrm{rank}[Y](I_1) = n_1$; that is $Y'$ must admit exactly $n_1$ pairs that properly contain $I_1$, all of which lie in $\mathsf{Green}$ by definition.
        \item $\mathrm{rank}[Y'](I_2) = \mathrm{rank}[Y](I_2) = n_1 + n_3$; that is $Y'$ must admit exactly $n_1 + n_3$ pairs that properly contain $I_2$. This implies that there are exactly $n_3$ pairs in $Y'$ that lie in $\mathsf{Orange}$.
        \item $\mathrm{rank}[Y'](I_3) = \mathrm{rank}[Y](I_3) = n_1 + n_2$; that is $Y'$ must admit exactly $n_1 + n_2$ pairs that properly contain $I_3$. This implies that there are exactly $n_2$ pairs in $Y'$ that lie in $\mathsf{Yellow}$.
        \item $\mathrm{rank}[Y'](I_4) = \mathrm{rank}[Y](I_4) = m + n_1 + n_2 + n_3$; that is $Y'$ must admit exactly $m + n_1 + n_2 + n_3$ pairs that properly contain $I_4$.
    \end{itemize}
    In particular, this implies that $Y'$ admits exactly $m$ pairs in $Q$, proving the first assertion. 
    \\
    To prove the second assertion, we argue by contradiction. Suppose that $Y'$ admits a pair $(b',d')$ such that $(b',d') \notin Q_i$ for any $i$, and $(b',d') \notin B$. Consider
    \[
    I = \left(\frac{b'+d'}{2},\frac{b'+d'}{2}\right) \in \mathbb{R}_{\leq}^2.
    \]
    Assume that there are exactly $m \geq 0$ pairs in $Y$ that properly contain $I$; that is $\mathrm{rank}[Y](I) = m$. By assumption, each such pair lies at distance at least $r$ from $(b',d')$. Hence, by the first assertion of this lemma, we can find at least $m$ pairs in $Y'$ (distinct from $(b',d')$) that properly contain $I$. Moreover, since $\mathrm{Grow}_\varepsilon (I)$ lies in the open band $B$ and $Q_1,\dots,Q_n,B$ are pairwise disjoint, these pairs in $Y'$ also properly contain $\mathrm{Grow}_\varepsilon (I)$. We then obtain the contradiction
    \[
    m+1 \leq \mathrm{rank}[\nabla_\varepsilon Y'](I) \leq \mathrm{rank}[Y](I) = m.
    \]
    Thus, $Y'$ cannot admit such a pair $(b',d')$. This completes the proof of the lemma.
\end{proof}

\begin{proof}[Proof of Theorem \ref{thm: bottleneck is erosion}]  
    First suppose that $d_B(Y,Y') < r_Y$.
     If $d_B(Y,Y') = 0$, then the statement is trivial, as $d_E(Y,Y') \leq d_B(Y,Y')$ holds, in general. So, assume $d_B(Y,Y') \neq 0$. We aim to show that for any $0 \leq \varepsilon < d_B(Y,Y')$, one of the rank inequalities in the definition of erosion distance fails; that is, there exists some $I \in \R^2_\leq$ for which
    \begin{equation}
         \mathrm{rank}[\nabla_{\varepsilon}Y](I) > \mathrm{rank}[Y'](I) \quad \text{ or } \quad \mathrm{rank}[\nabla_{\varepsilon}Y'](I) > \mathrm{rank}[Y](I) \label{align: rank comparisons condition i}
    \end{equation}
    This will imply the desired equality $d_E(Y,Y') = d_B(Y,Y')$.

    Let $\phi$ be an optimal matching between $Y$ and $Y'$, i.e., $\mathrm{cost}(\phi) = d_B(Y,Y')$. Without loss of generality, suppose this cost is attained at a matched pair $(b,d) \overset{\phi}{\longleftrightarrow}(b',d')$ with
    \begin{align*}
        \mathrm{cost}(\phi)= \max \{ |b-b'|,|d-d'|\} = |b-b'| = b'-b
    \end{align*}
    The other cases---such as the cost attained by $|d-d'|$ or by a diagonal match in $Y'$---follow by similar arguments as below. 
    
    Set $I = (b+ \varepsilon,d-r_Y)$, which by definition of $r_Y$, lies in $\R^2_\leq$. Recalling Equation \ref{eqn:rank_function} and Terminology \ref{terminology:properly_contains}, notice that $(b,d) \in Y$ properly contains $\mathrm{Grow}_{\varepsilon}(I)$ i.e. $(b,d)$ contains $\mathrm{Grow}_\varepsilon(I)$ as an interval, with $d-r_Y +\varepsilon<d$. In contrast, $(b',d')$ does not properly contain $I$ because $b'-b > \varepsilon$.
    Furthermore:
    \paragraph{Claim:} Each birth-death pair $(b'_i,d'_i)\in Y'$ that properly contains $I$ is uniquely matched with a birth-death pair $(b_i,d_i) \in Y$ that properly contains $\mathrm{Grow}_{\varepsilon}(I)$. Moreover, $(b_i,d_i)$ differs from $(b,d)$ either because it is a distinct point in  $\rr$, or because it is a separate copy of the same point $(b,d)$, due to multiplicity in the birth-death pairs.

\medskip
    To prove this claim, let $(b'_i,d'_i) \in Y'$ be such a birth-death pair. We first observe that
    \[
    d'_i - b'_i >  d- r_Y -b'_i \geq (d-b)-r_Y-\varepsilon \geq 4r_Y -r_Y -\varepsilon >2r_Y
    \]
    where the first two inequalities follow from the assumption that $(b'_i,d'_i)$ properly contains $I$, and the latter two follow from the definition of $r_Y$ and the assumption that $\varepsilon < d_B(Y,Y')< r_Y$, respectively. This implies that the cost of matching $(b'_i,d'_i)$ to the diagonal would exceed $r_Y$. Therefore, it must instead be matched with a unique birth-death pair $(b_i,d_i) \in Y$; that is $(b'_i,d'_i)\overset{\phi}{\longleftrightarrow} (b_i,d_i)$. Note that since $(b',d')$ does not properly contain $I$ and $(b,d) \overset{\phi}{\longleftrightarrow}(b',d')$, it follows that $(b_i,d_i)$ differs from $(b,d)$ in the sense explained above.
    We now establish that $(b_i,d_i)$ properly contains $\mathrm{Grow}_\varepsilon (I)$; that is $b_i \leq b$ and $d-r_Y + \varepsilon<d_i$. We prove this by way of contradiction as follows:
    \begin{itemize}
        \item Suppose $b_i > b$. Then, by Lemma \ref{lem: Gap between the boxes}, along with the conditions $r_Y>\varepsilon$ and $(b'_i,d'_i)$ properly containing $I$, we have
        \[
        b_i \geq b+2r_Y = b+r_Y + r_Y > b+\varepsilon +r_Y \geq b'_i +r_Y
        \]
        which implies that $|b_i - b'_i| >r_Y$. Since $(b'_i,d'_i)\overset{\phi}{\longleftrightarrow} (b_i,d_i)$, this gives $\mathrm{cost}(\phi) > r_Y$. However, this is impossible because $\mathrm{cost}(\phi) <r_Y$. Therefore, we must have $b_i \leq b$.
        \item Suppose now that $d_i \leq d-r_Y+\varepsilon$. This means that $d_i < d$ since $\varepsilon < r_Y$. By similar arguments as above, we obtain:
        \[
        d_i \leq d-2r_Y = d-r_Y - r_Y < d'_i -r_Y
        \]
        thus
        \[
        r_Y < d'_i -d_i = |d'_i - d_i|
        \]
        but since $(b'_i,d'_i)\overset{\phi}{\longleftrightarrow} (b_i,d_i)$, this cannot occur. We must have $d-r_Y + \varepsilon < d_i$.
    \end{itemize}
    This proves the claim.
    Now assume that $\mathrm{rank}[Y'](I) = m$; that is, there are exactly $m$ birth-death pairs in $Y'$ that properly contain $I$. By the  claim above, this implies that there are at least $m$ birth-death pairs in $Y$, distinct from $(b,d)$, that properly contain $\mathrm{Grow}_\varepsilon(I)$. Hence, we have
    \begin{align*}
        \mathrm{rank}[\nabla_{\varepsilon}Y](I) = \mathrm{rank}[Y](\mathrm{Grow}_{\varepsilon}(I)) \geq m+1 > m = \mathrm{rank}[Y'](I)
    \end{align*}
    Therefore, the rank inequality condition of the erosion distance definition is violated for $\varepsilon < d_B(Y,Y')$, which forces equality of the distances since we also have $d_E(Y,Y') \leq d_B(Y,Y')$.

    To complete the proof, now assume  that $d_E(Y,Y')<r_Y$. Then $d_B(Y,Y')<r_Y$, by Lemma \ref{lem:open_ball_lemma}. By the proof above, it follows that $d_E(Y,Y') = d_B(Y,Y')$ as desired.
\end{proof}

\begin{figure}
    \centering
    \includegraphics[width=0.70\linewidth]{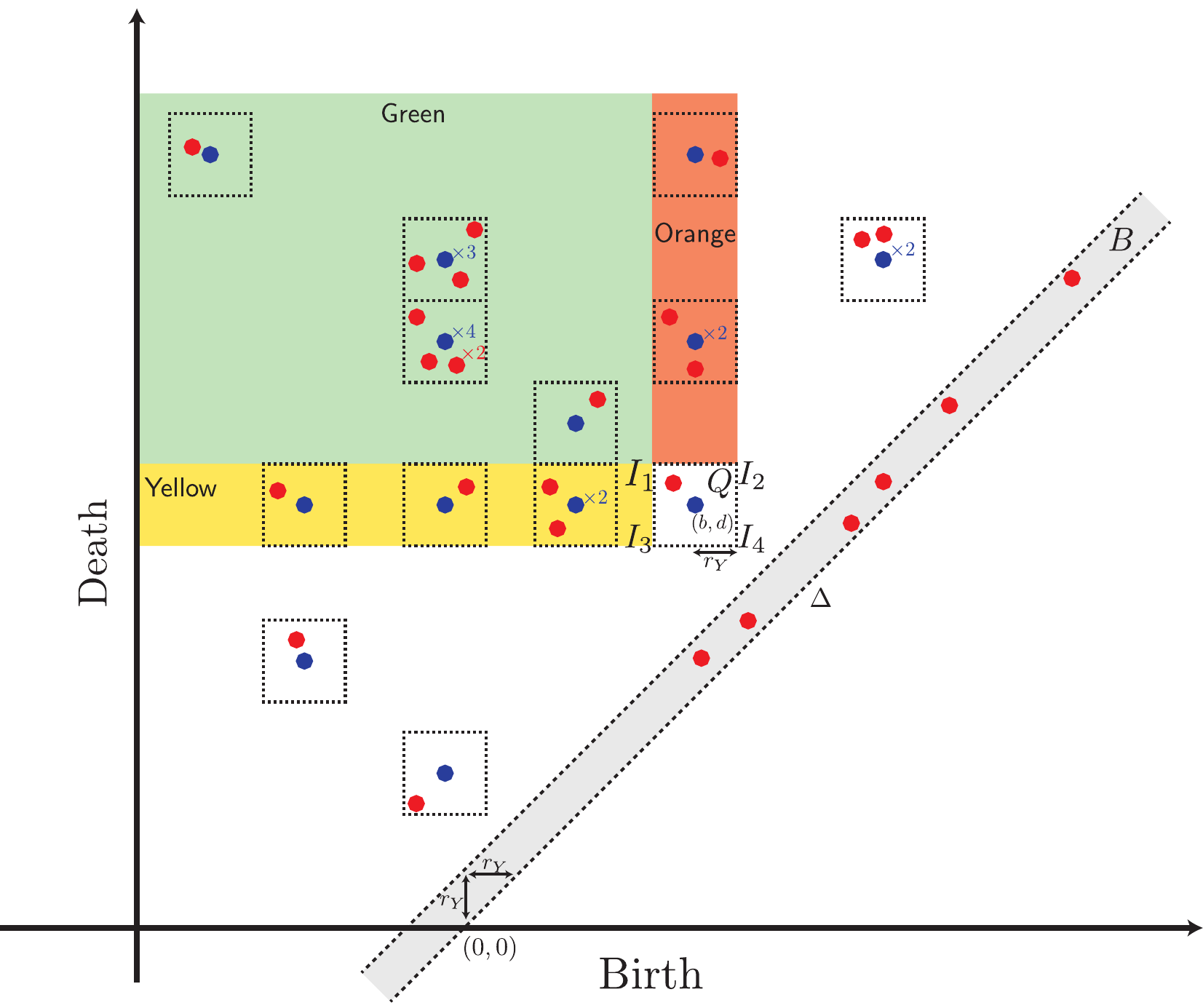}
    \caption{An example pair of persistence diagrams $Y$ and $Y'$ such that $d_E(Y,Y')<r_Y$ where $r_Y$ is as in Theorem \ref{thm: bottleneck is erosion}. Birth-death pairs in $Y$ are shown as blue points in $\rr$, and those in $Y'$ are shown in red. The regions $\mathsf{Green},\mathsf{Yellow},\mathsf{Orange}$ and $Q$, together with the points $I_1,I_2,I_3,I_4 \in \rr$, and the birth-death pair $(b,d)$ are as defined in the proof of Lemma \ref{lem:open_ball_lemma}. We set $r = r_Y$ for this illustration. The $r_Y$-unit open boxes surrounding each pair in $Y$ together with the open band $B$ (shown in gray above the diagonal $\Delta$) form the open ball with radius $r$ centered at $Y$ with respect to either metric $d_E$ or $d_B$. For each pair $(b_i,d_i)$ in $Y$, one has exactly as many pairs in $Y'$ as the multiplicity of $(b_i,d_i)$ in the corresponding box. In addition, $Y'$ may have any number of birth-death pairs in the open band $B$.}
    \label{fig:open_ball_figure}
\end{figure}

\subsection{Erosion Distance is not a Length Metric}

As a corollary to Theorem \ref{thm: bottleneck is erosion}, we prove that the space of persistence diagrams with the erosion distance, $(\mathsf{PDgm},d_E)$, is not a length space, and, in fact, its intrinsic metric is the bottleneck distance. The following recalls some standard definitions from metric geometry (see, e.g., \cite{burago2001course,bridson2013metric}).

\begin{defn}
    Let $(X,d)$ be a metric space. Denote by $\mathrm{len}_d(\alpha)$ the \textbf{length} of a continuous path $\alpha:[0,1] \to X$. That is, 
    \[
        \mathrm{len}_d(\alpha) = \sup\bigg\{ \sum d\left(\alpha(t_i),\alpha(t_{i+1})\right)\, \mid \, 0 = t_0<t_1<\cdots<t_{n+1} = 1 \bigg\}.
    \]
    The \textbf{intrinsic metric} associated to $d$ is defined by
    \begin{equation}\label{eqn:intrinsic_metric}
    \hat{d}(x,x') \coloneqq \inf\{\mathrm{len}_d(\alpha) \mid \alpha(0) = x, \; \alpha(1) = x'\},
    \end{equation}
    where the infimum is over all continuous paths into $X$ satisfying the given boundary conditions. If $\hat{d}= d$, then $(X,d)$ is called a \textbf{length space}. If $(X,d)$ is a length space such that the infimum in \eqref{eqn:intrinsic_metric} is always realized, then $(X,d)$ is called a \textbf{geodesic space}. In this case, an infimizing path $\alpha$ is called a \textbf{geodesic}, and it satisfies 
    \begin{align*}
       d(\alpha(s),\alpha(t)) = |s-t| d(x,x')
    \end{align*}
    for all $s,t\in [0,1]$.
\end{defn}

\begin{thm}\label{thm:Bottleneck is intrinsic to erosion}
    The intrinsic metric corresponding to erosion distance  $\hat{d}_E$ is bottleneck distance $d_B$. That is,
    \[
    \hat{d}_E(Y,Y') = d_B(Y,Y')
    \]
    for all $Y,Y'\in \mathsf{PDgm}$.
\end{thm}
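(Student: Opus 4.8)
The plan is to prove the two inequalities $\hat d_E \leq d_B$ and $d_B \leq \hat d_E$ separately, using the topological equivalence result (Theorem \ref{thm: bottleneck is erosion}) as the crucial bridge between the two metrics.

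\textbf{Step 1: $\hat d_E(Y,Y') \leq d_B(Y,Y')$.}
First I would exhibit, for any pair $Y,Y'$, a path from $Y$ to $Y'$ whose $d_E$-length is at most $d_B(Y,Y')$. Fix an optimal bottleneck matching $(S,S',\phi)$ and construct the obvious "straight-line" homotopy $\alpha:[0,1]\to\mathsf{PDgm}$: points of $S$ travel at constant $\ell^\infty$-speed toward their partners in $S'$, points of $Y\setminus S$ travel to the diagonal and are deleted (equivalently, shrink to a point and vanish), and points of $Y'\setminus S'$ emerge from the diagonal; one must insert these appearance/disappearance events at the appropriate times so that $\alpha$ is continuous in $d_B$ (hence in $d_E$, since $d_E\leq d_B$). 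This is a standard construction and shows $(\mathsf{PDgm},d_B)$ is a geodesic space with $\mathrm{len}_{d_B}(\alpha) = d_B(Y,Y')$. Since $d_E\leq d_B$ pointwise, $\mathrm{len}_{d_E}(\alpha)\leq \mathrm{len}_{d_B}(\alpha) = d_B(Y,Y')$, giving $\hat d_E(Y,Y')\leq d_B(Y,Y')$.

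\textbf{Step 2: $d_B(Y,Y') \leq \hat d_E(Y,Y')$.}
This is the direction where Theorem \ref{thm: bottleneck is erosion} does the real work. The idea is that $d_B$ is itself a length metric (it equals its own intrinsic metric, by Step 1's construction), so it suffices to show that $d_B$-length of any path is bounded above by its $d_E$-length — equivalently, that $\mathrm{len}_{d_E}(\alpha) \geq \mathrm{len}_{d_B}(\alpha)$ for every continuous path $\alpha$. Let $\alpha:[0,1]\to\mathsf{PDgm}$ be continuous (with respect to $d_E$, hence also $d_B$ by Theorem \ref{thm: bottleneck is erosion}). Along $\alpha$, each point $\alpha(t)$ has an associated local radius $r(\alpha(t))>0$ as in Theorem \ref{thm: bottleneck is erosion}; by compactness of $[0,1]$ and continuity of $\alpha$ in the common topology, one can choose a partition $0=t_0<t_1<\cdots<t_n=1$ fine enough that consecutive diagrams satisfy $d_B(\alpha(t_i),\alpha(t_{i+1})) < r(\alpha(t_i))$, whence Theorem \ref{thm: bottleneck is erosion} gives $d_B(\alpha(t_i),\alpha(t_{i+1})) = d_E(\alpha(t_i),\alpha(t_{i+1}))$. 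Summing, $\sum_i d_B(\alpha(t_i),\alpha(t_{i+1})) = \sum_i d_E(\alpha(t_i),\alpha(t_{i+1})) \leq \mathrm{len}_{d_E}(\alpha)$, and refining the partition the left side approaches $\mathrm{len}_{d_B}(\alpha)$ (or is already $\geq d_B(Y,Y')$ by the triangle inequality). Taking the infimum over paths yields $d_B(Y,Y') = \hat d_B(Y,Y') \leq \hat d_E(Y,Y')$.

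\textbf{Main obstacle.}
The delicate point is the uniform-partition argument in Step 2: the local radius $r(\alpha(t))$ from Theorem \ref{thm: bottleneck is erosion} depends discontinuously on $t$ (it can drop to near $0$ when points of $\alpha(t)$ collide or approach the diagonal), so one cannot simply take a minimum of $r$ over the image. The fix is to argue by a covering/compactness argument directly on $[0,1]$: for each $s$, continuity gives a neighborhood $U_s$ of $s$ on which $d_B(\alpha(s),\alpha(t)) < r(\alpha(s))$ and (by continuity of $\alpha$ at $s$) also $d_B(\alpha(t),\alpha(t')) < r(\alpha(s))$ for $t,t'\in U_s$; extract a finite subcover and choose the partition subordinate to it, so that each subinterval lies in some $U_s$ and Theorem \ref{thm: bottleneck is erosion} applies with center $\alpha(s)$. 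One must be slightly careful that the hypothesis of Theorem \ref{thm: bottleneck is erosion} is "$d_B(\alpha(s),\alpha(t)) < r(\alpha(s))$" — which the subcover guarantees — and conclude $d_E(\alpha(t),\alpha(t')) = d_B(\alpha(t),\alpha(t'))$ for the two endpoints of each subinterval by a further application of the theorem (possibly re-centering at $\alpha(t)$, using that $r$ is bounded below on $U_s$ after shrinking). Handling this bookkeeping cleanly is the only real work; everything else is the geodesic construction of Step 1 plus the standard fact that $\hat{\hat d} = \hat d$ and $d\leq d'$ implies $\hat d \leq \hat d'$.
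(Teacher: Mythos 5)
Your proposal is correct and follows essentially the same strategy as the paper's proof: exploit $d_E \leq d_B$ and the geodesicity of $d_B$ for one direction, and use Theorem~\ref{thm: bottleneck is erosion} together with a compactness/partition argument on the path to conclude that the $d_E$-length of any continuous path equals its $d_B$-length. Your ``main obstacle'' paragraph correctly flags a real subtlety that the paper's phrasing ``$t_i$ and $t_{i+1}$ lie in the same neighborhood'' glosses over---being in a ball $B(\alpha(s),r(\alpha(s)))$ does not by itself yield $d_E=d_B$ between two non-central points, since $r$ must be recomputed at one of the endpoints---and your fix of inserting the cover centers into the partition (so each subinterval has a center as an endpoint, and the triangle inequality transfers the $d_E$-bound to the original partition) is the right way to make the argument airtight.
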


\begin{proof}
    Let $Y,Y' \in \mathsf{PDgm}$ be two persistence diagrams. It suffices to show that 
    \[
    \mathrm{len}_{d_E}(\alpha) = \mathrm{len}_{d_B}(\alpha)
    \]
    for any rectifiable path $\alpha:[0,1] \to \mathsf{PDgm}$ from $Y$ to $Y'$. This gives the desired result:
    \[
    \hat{d}_{E}(Y,Y') = \inf \mathrm{len}_{d_E}(\alpha) = \inf \mathrm{len}_{d_B}(\alpha) = d_B(Y,Y')
    \]
    where the last equality follows from the fact that $(\mathsf{PDgm},d_B)$ is geodesic (see, e.g., \cite{chowdhury2019geodesics,che2024basic}). 
    
    Let us prove our claim that $ \mathrm{len}_{d_E}(\alpha) = \mathrm{len}_{d_B}(\alpha)$. Let $\alpha$ be a rectifiable path from $Y$ to $Y'$, and denote $\alpha(t) = Y_t$ for $t\in [0,1]$, with $Y_0 = Y$ and $Y_1 = Y'$. First, note that we will refer to a partition 
    \[
        P = \{s_0,s_1,\dots,s_{m+1}\} 
    \]
    of the interval $[0,1]$ as a \textbf{good partition} if $d_E$ and $d_B$ satisfy 
    \[
    d_E(\alpha(s_i),\alpha(s_{i+1})) = d_B(\alpha(s_i),\alpha(s_{i+1}))
    \]
    for all $0 \leq i \leq m$.
    
    By Theorem \ref{thm: bottleneck is erosion}, for any point $t\in [0,1]$, there exists a neighborhood of $\alpha(t) := Y_t \in \mathsf{PDgm}$ in which $d_E$ and $d_B$ measure distances to $Y_t$ the same. That is
    \[
    d_E(Y_t,Y'') = d_B(Y_t,Y'')
    \]
    for any $Y''$ in that neighborhood. These neighborhoods form an open cover of $\alpha([0,1])$ which is a compact subset of $\mathsf{PDgm}$. Therefore, we can extract a finite subcover. Moreover, we can find points
    \[
        0 = t_0<t_1<\cdots <t_{n+1} = 1
    \]
    such that $t_i$ and $t_{i+1}$ lie in the same neighborhood for each $i$. This implies that 
    \[
    P:=\{t_0,t_1,\dots,t_{n+1}\}
    \]
    forms a good partition. Therefore
    \[
        \sum_{i=1}^n d_E(\alpha(t_i),\alpha(t_{i+1}) ) = \sum_{i=1}^n d_B(\alpha(t_i),\alpha(t_{i+1}) )
    \]
    By similar arguments as above, for any arbitrary partition $P' = \{ t'_0,t'_1,\dots,t'_{k+1}\}$ we can find a corresponding good partition $\hat{P'} = \{s_0,s_1,\dots,s_{m+1}\}$ simply by taking the union of points in $P$ with $P'$ so that
    \[
        \sum_{i=1}^m d_B\left(\alpha(s_i),\alpha(s_{i+1})\right)=\sum_{i=1}^m d_E\left(\alpha(s_i),\alpha(s_{i+1})\right) \geq \sum_{i=1}^k d_E\left(\alpha(t'_i),\alpha(t'_{i+1})\right) = \sum_{i=1}^k d_B\left(\alpha(t'_i),\alpha(t'_{i+1})\right)
    \]
    In other words, when computing the lengths $\mathrm{len}_{d_E}(\alpha)$ and $\mathrm{len}_{d_B}(\alpha)$ , we can instead take the supremum over the set of good partitions. This implies that
    \[
        \mathrm{len}_{d_E}(\alpha) = \mathrm{len}_{d_B}(\alpha)
    \]
    as desired. 
\end{proof}

We have two immediate corollaries.

\begin{cor} \label{cor: d_E is not geodesic}
    The space $(\mathsf{PDgm},d_E)$ is not a length space.
\end{cor}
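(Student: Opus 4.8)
The plan is to derive Corollary \ref{cor: d_E is not geodesic} directly from the two previously established facts: that $\hat{d}_E = d_B$ (Theorem \ref{thm:Bottleneck is intrinsic to erosion}) and that $d_E$ is strictly smaller than $d_B$ on some pair of diagrams. If $(\mathsf{PDgm}, d_E)$ were a length space, then by definition we would have $\hat{d}_E = d_E$; combined with Theorem \ref{thm:Bottleneck is intrinsic to erosion}, this would force $d_E = d_B$ on all of $\mathsf{PDgm}$. So it suffices to exhibit a single pair $Y, Y'$ with $d_E(Y,Y') < d_B(Y,Y')$.

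First I would recall that such a pair is already available in the excerpt: in the proof of Proposition \ref{prop:bottleck_and_erosion_not_coarsely_equivalent}, Bubenik's construction (from \cite[Proposition 6.1]{bubenik2020persistence}) yields, for each $n$, diagrams $Y_n, Y_n'$ with $d_E(Y_n,Y_n') = 1$ and $d_B(Y_n,Y_n') = 2n+1$. Taking, say, $n = 1$ gives a concrete pair with $d_E = 1 < 3 = d_B$. (Even the elementary example alluded to around \cite[Example 1]{xian2022capturing} would suffice, but citing Bubenik's pair keeps the argument self-contained within the paper.) This immediately shows $d_E \neq d_B$ as metrics on $\mathsf{PDgm}$.

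Then the argument concludes in one line: if $(\mathsf{PDgm},d_E)$ were a length space, then $d_E = \hat{d}_E = d_B$ by Theorem \ref{thm:Bottleneck is intrinsic to erosion}, contradicting $d_E(Y_1,Y_1') = 1 \neq 3 = d_B(Y_1,Y_1')$. Hence $(\mathsf{PDgm},d_E)$ is not a length space.

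There is essentially no obstacle here — the corollary is a formal consequence of the theorem it follows, once one records that $d_E$ and $d_B$ genuinely differ somewhere. The only thing to be slightly careful about is the logical direction: "length space" means the intrinsic metric equals the original metric, so the contradiction really does follow from $\hat{d}_E = d_B$ together with $d_E \ne d_B$, and no appeal to geodesics or rectifiability of specific paths is needed. (One could alternatively phrase it as: the identity map $(\mathsf{PDgm},d_E) \to (\mathsf{PDgm},d_B)$ is $1$-Lipschitz but not isometric, and Theorem \ref{thm:Bottleneck is intrinsic to erosion} says the $d_B$-length of any path equals its $d_E$-length, so no $d_E$-path between $Y_1$ and $Y_1'$ can have length close to $d_E(Y_1,Y_1') = 1$.)
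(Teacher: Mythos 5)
Your argument is the same as the paper's: both invoke Theorem \ref{thm:Bottleneck is intrinsic to erosion} to identify $\hat{d}_E$ with $d_B$, then note that $d_E \neq d_B$ somewhere to conclude $\hat{d}_E \neq d_E$. The only cosmetic difference is the source you cite for $d_E \neq d_B$ (the Bubenik pair from Proposition \ref{prop:bottleck_and_erosion_not_coarsely_equivalent} rather than the \cite[Example 1]{xian2022capturing} reference mentioned in Section \ref{subsec:erosion_distance_background}); either works.
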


\begin{proof}
As was observed in Section \ref{subsec:erosion_distance_background}, $d_E \neq d_B$, in general. Since the intrinsic metric of $d_E$ is not itself, $(\mathsf{PDgm},d_E)$ is not a length space. 
\end{proof}

Applying Theorem \ref{thm: bottleneck is erosion} yields the following reinterpretation.

\begin{cor} \label{cor: d_L is not geodesic}
    The space $(\mathsf{PDgm},d_L)$ is not a length space. In fact, the bottleneck distance is the intrinsic metric associated to the landscape distance.
\end{cor}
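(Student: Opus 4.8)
The plan is to deduce Corollary \ref{cor: d_L is not geodesic} directly from the two equivalence results already in hand: Theorem \ref{thm: d_E = Landscape Distance}, which gives $d_L = d_E$ as metrics on $\mathsf{PDgm}$, and Theorem \ref{thm:Bottleneck is intrinsic to erosion}, which identifies $\widehat{d_E} = d_B$. Since $d_L$ and $d_E$ are literally the same metric on the set $\mathsf{PDgm}$, they have the same continuous paths, the same path-lengths (the length functional $\mathrm{len}$ depends only on the metric), and hence the same intrinsic metric. Therefore $\widehat{d_L} = \widehat{d_E} = d_B$. Because $d_B \neq d_L$ in general (as $d_B \neq d_E$ in general, recalled in Section \ref{subsec:erosion_distance_background}, and $d_L = d_E$), the intrinsic metric of $d_L$ differs from $d_L$, so $(\mathsf{PDgm}, d_L)$ is not a length space. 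This mirrors exactly the structure of the proof of Corollary \ref{cor: d_E is not geodesic}.

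The key steps, in order, would be: first, invoke Theorem \ref{thm: d_E = Landscape Distance} to replace $d_L$ by $d_E$ throughout; second, observe that $\mathrm{len}_{d_L} = \mathrm{len}_{d_E}$ and hence $\widehat{d_L} = \widehat{d_E}$ as an immediate consequence; third, apply Theorem \ref{thm:Bottleneck is intrinsic to erosion} to conclude $\widehat{d_L} = d_B$; and fourth, note $d_L \neq d_B$ in general to conclude that $(\mathsf{PDgm},d_L)$ is not a length space. There is essentially no obstacle here — the only thing requiring a (trivial) remark is that the length functional and the intrinsic-metric construction are invariant under replacing a metric by an equal metric, which is immediate from the definitions in \eqref{eqn:intrinsic_metric}.

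\begin{proof}
    By Theorem \ref{thm: d_E = Landscape Distance}, $d_L = d_E$ as metrics on $\mathsf{PDgm}$. In particular, the two metrics determine the same length functional on continuous paths and hence the same intrinsic metric: $\widehat{d_L} = \widehat{d_E}$. By Theorem \ref{thm:Bottleneck is intrinsic to erosion}, $\widehat{d_E} = d_B$, so $\widehat{d_L} = d_B$. Since $d_E \neq d_B$ in general (Section \ref{subsec:erosion_distance_background}) and $d_L = d_E$, we have $d_L \neq \widehat{d_L}$, so $(\mathsf{PDgm},d_L)$ is not a length space.
\end{proof}
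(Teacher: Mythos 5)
Your proof is correct and is precisely the reasoning the paper leaves implicit (the paper gives no written proof for this corollary, treating it as immediate from Theorem \ref{thm: d_E = Landscape Distance} and Theorem \ref{thm:Bottleneck is intrinsic to erosion}). Since $d_L$ and $d_E$ are the same metric on $\mathsf{PDgm}$, the length functional and intrinsic metric are identical, and the claim follows exactly as you wrote.
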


\subsection{Birth-Zero Diagrams}

We study a specific subset of persistence diagrams, defined as follows.

\begin{defn} \label{defn: birth-zero persistence diagrams}
    By $\mathsf{PDgm}^{0}$, we denote the proper subset of persistence diagrams of the form 
    \begin{align*}
        Y = \{ (0,d_i)\}_{i=1}^N
    \end{align*}
    with $d_1\geq d_2\geq \cdots \geq d_N >0$. We refer to such diagrams as \textbf{birth-zero persistence diagrams}.
\end{defn}

\begin{remark}
    When one works with persistence diagrams that arise from the Vietoris-Rips filtration in homology degree zero $H_0$, the resulting diagram is a birth-zero persistence diagram. For this reason, it is sometimes referred to as a \emph{zero-degree persistence diagram}, as in \cite[Patrangenaru et al., 2019]{patrangenaru2019challenges}.
\end{remark}
On this subspace of persistence diagrams, we are able to derive a stronger connection between the bottleneck and erosion distances, and to relate them to an alternative vectorization introduced by Patrangenaru et al.\, in \cite{patrangenaru2019challenges}. These results provide key tools for addressing coarse non-embeddability questions in Section \ref{sec:coarse_embeddings} below. 

Although Proposition \ref{prop:bottleck_and_erosion_not_coarsely_equivalent} states that bottleneck and erosion distances are not equivalent (even in a weak sense) on the full space $\mathsf{PDgm}$, the next result shows that they agree on the subspace $\mathsf{PDgm^0}$. 

\begin{thm} \label{thm: d_B = d_E on PDgm^0}
The bottleneck distance and the erosion distance agree on $\mathsf{PDgm}^0$; that is, for any $Y, Y' \in \mathsf{PDgm}^0$, we have 
\[
    d_B(Y, Y') = d_E(Y, Y').
\]
\end{thm}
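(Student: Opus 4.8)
Since $d_E \leq d_B$ holds on all of $\mathsf{PDgm}$, it suffices to prove $d_B(Y,Y') \leq d_E(Y,Y')$ for $Y,Y' \in \mathsf{PDgm}^0$. Write $Y = \{(0,d_i)\}_{i=1}^N$ and $Y' = \{(0,d_i')\}_{i=1}^{N'}$ with the death coordinates sorted in non-increasing order, and pad the shorter sequence with zeros so that both have the same length $M = \max\{N,N'\}$. The plan is to show that, for $\varepsilon = d_E(Y,Y')$, the matching $\phi$ that pairs $(0,d_i)$ with $(0,d_i')$ (matching a padded zero-entry with the diagonal) has cost at most $\varepsilon$; concretely, I will show $|d_i - d_i'| \leq 2\varepsilon$ for every $i$, which is exactly the statement $\mathrm{cost}(\phi) \leq \varepsilon$ since each point has birth $0$ and the diagonal cost of an unmatched $(0,d_i)$ is $d_i/2$.

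The key observation is that for a birth-zero diagram, the rank function has a particularly simple form: $\mathrm{rank}[Y](b,d) = |\{i : b \leq 0, d < d_i\}|$, which is $|\{i : d_i > d\}|$ when $b \leq 0$ and $0$ when $b > 0$. So the only informative rank values come from evaluating at intervals of the form $(0,d)$ (or $(b,d)$ with $b\le 0$), and $\mathrm{rank}[Y](0,d) = |\{i : d_i > d\}|$ is just the "inverse" of the sorted death sequence. The erosion condition $\mathrm{rank}[\nabla_\varepsilon Y](b,d) \leq \mathrm{rank}[Y'](b,d)$ for all $b \leq d$, applied at $(b,d) = (\varepsilon, d - \varepsilon)$ when $\varepsilon$ is small enough that this lies above the diagonal, gives $\mathrm{rank}[Y](0, d) \leq \mathrm{rank}[Y'](\varepsilon, d-\varepsilon)$; but for $\varepsilon > 0$ the left-growth pushes the birth coordinate to $-\varepsilon < 0$, so more carefully one evaluates the defining inequality at intervals $I$ chosen so that $\mathrm{Grow}_\varepsilon(I)$ still has non-positive birth. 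Unwinding, the erosion inequalities translate precisely into: for every threshold $d$, $|\{i : d_i > d + \varepsilon\}| \leq |\{j : d_j' > d\}|$ and symmetrically. I would then convert these "CDF comparison" inequalities into the pointwise bound $|d_i - d_i'| \leq 2\varepsilon$ by a standard argument: if $d_i > d_i' + 2\varepsilon$ for some $i$, set $d = d_i' + \varepsilon$; then at least $i$ of the $d_j$ exceed $d + \varepsilon$... wait, one needs the count $\geq i$ and the comparison count $< i$, which follows because $d_i', d_{i+1}', \ldots \leq d_i' < d$, so at most $i-1$ of the $d_j'$ exceed $d$ — contradicting the erosion inequality. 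The symmetric assumption $d_i' > d_i + 2\varepsilon$ is handled by the other erosion inequality.

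The main obstacle, and the step requiring the most care, is the precise bookkeeping at the boundary between "birth $= 0$" and "birth $< 0$": the grow map $\mathrm{Grow}_\varepsilon$ always decreases the birth coordinate, so one cannot directly plug $(0, d)$ into the erosion inequality and read off a clean statement — one must choose the test interval $I$ (e.g. $I = (\delta, d)$ with $0 < \delta \le \varepsilon$, or take a limit $\delta \to 0^+$, or exploit that $\mathrm{rank}$ only depends on whether the birth is $\le 0$) so that $\mathrm{Grow}_\varepsilon(I)$ has the right birth coordinate. An alternative, possibly cleaner, route is to observe that on $\mathsf{PDgm}^0$ the death-vectorization embedding $\mathrm{DV}$ (sorted death sequence, viewed in $\ell^\infty$) satisfies $d_B(Y,Y') = \tfrac12 \|\mathrm{DV}(Y) - \mathrm{DV}(Y')\|_\infty$ — a standard fact for bottleneck distance between birth-zero diagrams — and independently show $d_E(Y,Y') = \tfrac12\|\mathrm{DV}(Y) - \mathrm{DV}(Y')\|_\infty$ using the rank-function description above, after which the two are equal by transitivity. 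I would likely present the argument via this sorted-sequence reformulation, since it makes both the rank computations and the final pointwise bound most transparent, and it sets up the subsequent Corollary \ref{cor: Death vectorization map is bilipschitz} directly.
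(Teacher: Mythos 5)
Your proposal diverges from the paper's proof, which routes through the main theorem: the paper shows that for birth-zero diagrams the sorted landscape functions satisfy $\lambda^Y_k = f_{(0,d_k)}$, so $d_L(Y,Y') = \max_k d_L(\{(0,d_k)\},\{(0,d'_k)\})$, computes the single-point landscape/bottleneck distances explicitly as $\min\{|d-d'|,\max\{d/2,d'/2\}\}$, and then invokes Theorem \ref{thm: d_E = Landscape Distance} to convert $d_L$ to $d_E$. Your route is a direct rank-function argument that bypasses landscapes; in principle such an argument exists, but as written yours has two genuine errors.

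First, your CDF argument is too weak by a factor of two, and the step where you close the argument is a non sequitur. From the erosion inequalities you extract $|d_i - d'_i| \le 2\varepsilon$ for $\varepsilon = d_E(Y,Y')$, and you assert that this ``is exactly the statement $\mathrm{cost}(\phi)\le\varepsilon$.'' It is not: for the sorted matching $\phi$ with no diagonal matches, $\mathrm{cost}(\phi) = \max_i |d_i - d'_i|$, so $|d_i - d'_i| \le 2\varepsilon$ only gives $\mathrm{cost}(\phi)\le 2\varepsilon$ and hence $d_B \le 2\,d_E$. To get equality one needs the \emph{disjunction}: for each $i$, either $|d_i - d'_i|\le\varepsilon$ or $\max\{d_i,d'_i\}\le 2\varepsilon$ (in the latter case both points go to the diagonal). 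That is the content of Corollary \ref{cor:formula_bottleneck_degree_0}. Your CDF comparison can be sharpened to yield this: when $d_i - d'_i > \varepsilon$, choose a test threshold $d = d'_i + c$ with $0 < c < d_i - d'_i - \varepsilon$; for such $d$ to be admissible (i.e.\ $\ge\varepsilon$, so that the interval $(\varepsilon,d)$ and its $\varepsilon$-growth both have nonnegative birth) one either needs $d'_i\ge\varepsilon$ or $d_i > 2\varepsilon$, and in the remaining case $d'_i<\varepsilon$, $d_i\le 2\varepsilon$ one falls into the diagonal branch. Your version, with the fixed choice $d = d'_i+\varepsilon$ and the threshold $2\varepsilon$, never produces this case split, and so never reaches the conclusion.

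Second, your proposed clean alternative relies on the identity $d_B(Y,Y') = \tfrac{1}{2}\|\mathrm{DV}(Y)-\mathrm{DV}(Y')\|_\infty$, which is false. For $Y=\{(0,1)\}$ and $Y'=\{(0,1.5)\}$ one has $\|\mathrm{DV}(Y)-\mathrm{DV}(Y')\|_\infty = 0.5$ and $d_B(Y,Y') = \min\{0.5,\, 0.75\} = 0.5$, not $0.25$; the ratio $d_B/\|\mathrm{DV}-\mathrm{DV}'\|_\infty$ ranges over $[\tfrac12,1]$. The true statement is only the bi-Lipschitz inequality of Corollary \ref{cor: Death vectorization map is bilipschitz}, which is \emph{derived from} this theorem in the paper. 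The same issue kills the proposed parallel identity for $d_E$. A minor further slip: for a birth-zero diagram the rank $\mathrm{rank}[Y](b,d)$ is $|\{i: d_i>d\}|$ when $b\ge 0$ and is $0$ when $b<0$ (not the other way around, as you wrote), since the defining condition is $b_i\le b$ with $b_i=0$.
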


\begin{proof}
    First consider diagrams consisting of single points $\{(0,d)\}$ and $\{(0,d')\}$. It is not hard to show, by direct computation, that
    \[
    d_L(\{(0,d)\},\{(0,d')\}) = \|f_{(0,d)} - f_{(0,d')}\|_\infty = \min \{|d-d'|,\max\{d/2,d'/2\}\}.
    \]
    On the other hand, this is also the formula for $d_B(\{(0,d)\},\{(0,d')\})$ (there are only two choices of matchings for these diagrams, whose costs are the elements of the set given above).

    Now consider the general case $Y = \{(0,d_i)\}_{i=1}^N$ and $Y' = \{(0,d_i')\}_{i=1}^{N'}$. Without loss of generality, assume that $N' \geq N$ and set $d_{N+1}=\cdots = d_{N'} = 0$. Then, by the fact observed above and Theorem \ref{thm: d_E = Landscape Distance},
    \[
    \begin{split}
    d_E(Y,Y') = d_L(Y,Y') &= \max_{i=1,\ldots,N'} d_L(\{(0,d_i)\},\{(0,d_i')\}) \\
    &= \max_{i=1,\ldots,N'} d_B(\{(0,d_i)\},\{(0,d_i')\}) \geq d_B(Y,Y'),
    \end{split}
    \]
    where the last inequality follows because the quantity on its left hand side is the cost of a particular matching. Indeed, it is easy to verify that
    \[
    \mathrm{cost}(\phi) = \max_{i=1,\dots,N'} \min\{|d_i-d'_i|,\max\{d_i/2,d'_i/2\}\}
    \]
    where the matching $\phi$ pairs $(0,d_i)\overset{\phi}{\longleftrightarrow} (0,d'_i)$ if and only if
    \[
        |d_i-d'_i| \leq \max\{d_i/2,d'_i/2\}
    \]
    and otherwise both points are matched with the diagonal, i.e.,
    \[
        (0,d_i)\overset{\phi}{\longleftrightarrow} \Delta \quad \text{ and } \quad (0,d'_i)\overset{\phi}{\longleftrightarrow} \Delta
    \]
    for each $i = 1,\dots,N$. Moreover, for indices $i>N$, the points $(0,d'_i)$ are matched with $\Delta$. Since $d_E \leq d_B$, in general, this completes the proof.
\end{proof}

\begin{remark}
    A result similar to Theorem \ref{thm: d_B = d_E on PDgm^0}, comparing bottleneck distance and a certain interleaving distance between Betti curves on the subspace of birth-zero persistence diagrams, is obtained in \cite[Theorem 4.14]{kim2021spatiotemporal}. The proof strategies are similar, but comparing the explicit calculations of \cite[Example 4.15]{kim2021spatiotemporal} shows that their result is distinct from ours.
\end{remark}

\begin{remark}\label{rmk:bottleneck_distance_computation}
    The computational complexity of the bottleneck distance was initially explored by Efrat et al. ~\cite{efrat2001geometry}, and more recently Kerber et al.~\cite[Theorem 3.1]{kerber2017geometry} proposed an efficient algorithm. While we do not delve into a detailed discussion of the bottleneck distance's computational complexity here, it is worth noting that the proof of Theorem \ref{thm: d_B = d_E on PDgm^0} leads to a simple formulation---which suggest a linear-time computation---of the distances $d_B = d_E = d_L$ on $\mathsf{PDgm}^0$, as we record in the following corollary. Indeed, the partial matching constructed in the proof of Theorem \ref{thm: d_B = d_E on PDgm^0} is optimal.
\end{remark}

\begin{cor}\label{cor:formula_bottleneck_degree_0}
    Let $Y = \{(0,d_i)\}_{i=1}^N$ and $Y' = \{(0,d_i')\}_{i=1}^{N'}$, and without loss of generality, assume that $N' \geq N$. Set $d_{N+1}=\cdots = d_{N'} = 0$. For $d_\bullet = d_B = d_E = d_L$, we have
    \[
    d_\bullet(Y,Y') = \max_{i=1,\dots,N'} \min \bigg\{ |d_i - d'_i|, \max\{ d_i/2,d'_i/2 \} \bigg\}.
    \]
\end{cor}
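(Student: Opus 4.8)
The plan is to read off the formula directly from the proof of Theorem~\ref{thm: d_B = d_E on PDgm^0}, which already does essentially all of the work. That proof establishes the chain of (in)equalities
\[
d_E(Y,Y') = d_L(Y,Y') = \max_{i=1,\dots,N'} d_B(\{(0,d_i)\},\{(0,d'_i)\}) \geq d_B(Y,Y'),
\]
and combines this with the general inequality $d_E \leq d_B$ to force all of these quantities to coincide. So the first step is simply to invoke that theorem to obtain $d_B(Y,Y') = d_E(Y,Y') = d_L(Y,Y')$ under the stated normalization ($N' \geq N$, padding with $d_{N+1} = \cdots = d_{N'} = 0$).

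The second step is to unwind the single-point computation recorded at the start of that proof, namely
\[
d_B(\{(0,d)\},\{(0,d')\}) = \min\{|d-d'|,\max\{d/2,d'/2\}\},
\]
which accounts for the only two possible partial matchings between two one-point birth-zero diagrams (match the points to each other, at cost $|d-d'|$; or send both to the diagonal, at cost $\max\{d/2,d'/2\}$). Substituting this into the $\max$ over $i$ appearing in the chain above yields exactly
\[
d_\bullet(Y,Y') = \max_{i=1,\dots,N'} \min\{|d_i - d'_i|, \max\{d_i/2, d'_i/2\}\},
\]
completing the proof. There is really no obstacle here: the corollary is a bookkeeping restatement of intermediate equalities already proven, and the only thing to be careful about is matching conventions — that the padding convention ($d_i = 0$ for $i > N$) makes $\min\{|d_i - 0|, \max\{d_i/2,0\}\} = \min\{d'_i, d'_i/2\} = d'_i/2$ for the extra indices, which is consistent with matching those surplus points of $Y'$ to the diagonal, as the proof of Theorem~\ref{thm: d_B = d_E on PDgm^0} notes. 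One may optionally add a sentence remarking that the matching $\phi$ exhibited there is therefore optimal, justifying the ``linear-time computation'' comment in Remark~\ref{rmk:bottleneck_distance_computation}, but this is not needed for the statement itself.
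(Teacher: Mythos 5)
Your proposal is correct and takes the same route the paper intends: the corollary is recorded immediately after Theorem~\ref{thm: d_B = d_E on PDgm^0} precisely as a restatement of the chain of equalities established there, and Remark~\ref{rmk:bottleneck_distance_computation} makes explicit that the matching constructed in that proof is optimal, which is all you invoke. The only thing to tidy is a small index slip in your padding sanity check — for $i > N$ the padded value is $d_i = 0$, so the relevant quantity is $\min\{|0 - d'_i|,\max\{0, d'_i/2\}\} = d'_i/2$; you wrote $d_i$ where $d'_i$ should appear in the first expression, but your conclusion $d'_i/2$ is right.
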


Birth-zero persistence diagrams can be represented as sequences in $\ell^\infty$ by listing their death coordinates followed by infinitely many zeros. This is known as the \textbf{death-vectorization} embedding due to Patrangenaru et. al. \cite{patrangenaru2019challenges}. An easy corollary of Theorem \ref{thm: d_B = d_E on PDgm^0} is that this embedding is bi-Lipschitz. Let us now formalize this result. 

\begin{defn} \cite{patrangenaru2019challenges}
The \textbf{death-vectorization} map $\mathrm{DV} : \mathsf{PDgm}^0 \to \ell^\infty$, is defined by 
\[
\mathrm{DV} \left( \left\{ (0, d_i) \right\}_{i=1}^n \right) = (d_1, \dots, d_n, 0, \dots).
\]
\end{defn}

\begin{remark} \label{rmk: image of the death-vectorization map}
Let $\ell^\infty_\geq \subset \ell^\infty$ denote the subset of sequences of the form $(a_1, \dots, a_n, 0, \dots)$ with $a_1 \geq \dots \geq a_n > 0$ for some $n \in \N$. Notice that this is the image of the death-vectorization map, and that $\mathrm{DV} : \mathsf{PDgm}^0 \to \ell^\infty_{\geq}$ is a bijection.
\end{remark}

\begin{cor} \label{cor: Death vectorization map is bilipschitz}
The death-vectorization map satisfies
\[
d_\bullet(Y, Y') \leq \| \mathrm{DV}(Y) - \mathrm{DV}(Y') \|_\infty \leq 2 \, d_\bullet(Y, Y')
\]
for all $Y, Y' \in \mathsf{PDgm}^0$, where $d_\bullet = d_L = d_B = d_E$. 
\end{cor}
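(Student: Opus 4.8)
The plan is to reduce the inequality to a pointwise comparison between the sorted death-coordinate sequences, using the closed-form expression for $d_\bullet$ provided by Corollary \ref{cor:formula_bottleneck_degree_0}. Fix $Y = \{(0,d_i)\}_{i=1}^N$ and $Y' = \{(0,d_i')\}_{i=1}^{N'}$, and without loss of generality assume $N' \geq N$, padding with $d_{N+1} = \cdots = d_{N'} = 0$ so that both sequences have the same length $N'$; then $\|\mathrm{DV}(Y) - \mathrm{DV}(Y')\|_\infty = \max_{i=1,\ldots,N'} |d_i - d_i'|$. By Corollary \ref{cor:formula_bottleneck_degree_0}, $d_\bullet(Y,Y') = \max_{i} \min\{|d_i - d_i'|, \max\{d_i/2, d_i'/2\}\}$. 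So the entire statement follows if I can prove the two pointwise inequalities
\[
\min\{a, \max\{d/2,d'/2\}\} \leq a \leq 2\min\{a, \max\{d/2, d'/2\}\}
\]
where $a = |d - d'|$, for every pair of reals $d, d' \geq 0$, and then take the maximum over $i$.

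The left inequality is immediate since $\min\{a, x\} \leq a$ always. For the right inequality, the only non-trivial case is when the minimum equals $\max\{d/2,d'/2\}$, i.e., when $|d-d'| > \max\{d/2,d'/2\}$; I need $|d-d'| \leq 2\max\{d/2,d'/2\} = \max\{d,d'\}$. This is the key elementary estimate: by symmetry assume $d \geq d' \geq 0$, so $|d-d'| = d - d' \leq d = \max\{d,d'\}$, which holds because $d' \geq 0$. Hence the pointwise right inequality holds, and taking the maximum over $i = 1,\ldots,N'$ of both sides gives
\[
d_\bullet(Y,Y') \leq \|\mathrm{DV}(Y) - \mathrm{DV}(Y')\|_\infty \leq 2\, d_\bullet(Y,Y').
\]

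There is essentially no obstacle here: the substantive work was already done in Theorem \ref{thm: d_B = d_E on PDgm^0} and its Corollary \ref{cor:formula_bottleneck_degree_0}, and what remains is the one-line inequality $|d - d'| \leq \max\{d, d'\}$ for non-negative reals, which handles the truncation discrepancy between the bottleneck-type cost and the raw sup-norm of the difference. The only point requiring a modicum of care is the bookkeeping of unequal diagram sizes, which is dispatched by zero-padding exactly as in the statement of Corollary \ref{cor:formula_bottleneck_degree_0} (and one should note that zero-padding does not change $\|\mathrm{DV}(Y) - \mathrm{DV}(Y')\|_\infty$, since $\mathrm{DV}$ already appends infinitely many zeros). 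I would also remark that both constants are sharp: equality on the left is achieved when $|d - d'| \leq \max\{d/2, d'/2\}$ for all coordinates (e.g.\ nearby diagrams), and the factor $2$ on the right is approached by comparing $\{(0,d)\}$ with the empty diagram as $d \to \infty$, where $\|\mathrm{DV}\cdot\|_\infty = d$ while $d_\bullet = d/2$.
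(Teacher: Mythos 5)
Your proof is correct and follows essentially the same route as the paper: both reduce to the closed-form expression from Corollary \ref{cor:formula_bottleneck_degree_0} and verify the pointwise inequality $\min\{|d-d'|,\max\{d/2,d'/2\}\} \leq |d-d'| \leq \max\{d,d'\}$ before taking the maximum over indices. Your organization as an explicit pointwise two-sided bound is if anything a touch cleaner than the paper's case analysis on which term realizes $2d_\bullet$; the only tiny quibble is in your closing sharpness remark, where comparing $\{(0,d)\}$ with the empty diagram already achieves the factor $2$ exactly for every $d>0$, so the ``as $d\to\infty$'' qualifier is unnecessary.
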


\begin{proof}
    Let $Y = \{(0,d_i)\}_{i=1}^N$ and $Y' = \{(0,d_i')\}_{i=1}^{N'}$ be an arbitrary pair of birth-zero diagrams, as stated in Corollary \ref{cor:formula_bottleneck_degree_0}. Then,
    \[
    d_\bullet(Y,Y') = \max_{i=1,\dots,N'} \min \{ |d_i - d'_i|, \max\{ d_i/2,d'_i/2 \} \} \leq \max_{i=1,\dots,N'} |d_i - d'_i| = \| \mathrm{DV}(Y) - \mathrm{DV}(Y')\|_{\infty}
    \]
    is obvious. On the other hand,
    \[
    2 \, d_\bullet(Y,Y')= \max_{i=1,\dots,N'} \min \{2 \,|d_i - d'_i|, \max\{ d_i,d'_i \} \} \geq \max_{i=1,\dots,N'} |d_i - d'_i| = \| \mathrm{DV}(Y) - \mathrm{DV}(Y')\|_{\infty}.
    \]
    Indeed, if $2\,d_\bullet(Y,Y')$ is realized by $2\,|d_i - d_i'|$ then the inequality is clear; if it is realized by, say, $d_i$, then $d_i \geq d_i'$, so that $|d_i-d_i'| = d_i - d_i' \leq d_i$.
\end{proof}

\begin{remark}\label{rmk:tightness}
   It is not hard to find examples which illustrate the the bounds in Theorem \ref{thm: d_B = d_E on PDgm^0} are tight. Indeed, take $Y = \{(0,1)\}$, $Y' = \{(0,\varepsilon)\}$ for some small $\varepsilon > 0$. Then $d_L(Y,Y') = 1/2$, while the death vectorization distance is $1-\varepsilon$. Taking $\varepsilon \to 0$ gets arbitrarily close to realizing the upper bound. Likewise, it is easy to find an example showing that the lower bound is tight.
\end{remark}

\begin{remark}\label{rmk:relation_to_memoli_zhou}
It is shown in \cite{memoli2022ephemeral,memoli2023ephemeral} that if $Y$ and $Y'$ are diagrams derived from degree-0 persistent homology of the Vietoris-Rips complexes of finite metric spaces $X$ and $X'$, respectively, that 
\[
d_B(Y,Y') \leq \| \mathrm{DV}(Y) - \mathrm{DV}(Y') \|_\infty \leq 2 d_{GH}(X,X'),
\]
where $d_{GH}$ is the \emph{Gromov-Hausdorff distance} between metric spaces (see \cite{burago2001course}). Indeed, this follows from the main stability result \cite[Theorem 5]{memoli2023ephemeral} together with \cite[Proposition 1.5]{memoli2022ephemeral} (from the extended arXiv version of the paper), which specializes to the birth-zero setting. 

By the general Gromov-Hausdorff stability of persistent homology~\cite[Theorem 3.1]{chazal2009gromov}, our Theorem \ref{cor: Death vectorization map is bilipschitz} only yields the upper Gromov-Hausdorff bound
\[
d_L(Y,Y') \leq 4 d_{GH}(X,X'),
\]
and is therefore suboptimal. On the other hand, our theorem is purely at the level of diagrams, and the example of Remark \ref{rmk:tightness} shows that the bound cannot be improved at that level. 
\end{remark}

\subsection{Coarse Non-Embeddability}\label{sec:coarse_embeddings}

Bubenik and Wagner \cite{bubenik2020embeddings} showed that persistence diagrams equipped with the bottleneck distance do not admit a coarse embedding into a Hilbert space. Motivated by their work, we strengthen their non-embeddability result by proving that even the space of birth-zero persistence diagrams--whether equipped with the bottleneck distance or the erosion distance--fails to admit a coarse embedding into a Hilbert space. This is established in Theorem \ref{thm: space of persistence diagrams do not admit coarse embeddings}, which not only offers an alternative proof of the result by Bubenik and Wagner ~\cite{bubenik2020embeddings}, but also expands upon it by extending the analysis to include the erosion and landscape distances.

\begin{thm}[Coarse Non-Embeddability in Hilbert Spaces]\label{thm: space of persistence diagrams do not admit coarse embeddings}
    The space $(\mathsf{PDgm}^{0},d_\bullet)$ of birth-zero persistence diagrams, equipped with the distance $d_\bullet = d_B = d_E = d_L$, does not admit a coarse embedding into a Hilbert space. Consequently, the full spaces of persistence diagrams $(\mathsf{PDgm},d_\bullet)$, with $d_\bullet = d_B$ or $d_\bullet =d_E=d_L$, also fail to admit such an embedding.
\end{thm}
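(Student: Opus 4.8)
The plan is to reduce the non-embeddability of $(\mathsf{PDgm}^0, d_\bullet)$ to the classical fact that $(\ell^\infty, \|\cdot\|_\infty)$—or, more precisely, a suitable subspace of it carrying an expander-like or coarsely-incompressible structure—does not coarsely embed into any Hilbert space. The key structural input is Corollary \ref{cor: Death vectorization map is bilipschitz}, which tells us that the death-vectorization map $\mathrm{DV}: (\mathsf{PDgm}^0, d_\bullet) \to (\ell^\infty_{\geq}, \|\cdot\|_\infty)$ is a bi-Lipschitz equivalence onto its image $\ell^\infty_\geq$. Since coarse embeddability into a Hilbert space is a coarse invariant—and in particular is preserved and reflected by bi-Lipschitz equivalences—it suffices to show that $(\ell^\infty_\geq, \|\cdot\|_\infty)$ does not coarsely embed into a Hilbert space.

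First I would recall the relevant coarse-geometric facts: a metric space coarsely embeds into a Hilbert space iff all of its finite subsets do so with a uniform pair of control functions $\rho^\pm$ (this is essentially the content that coarse embeddability is "finitely determined" for Hilbert targets, via a compactness/ultralimit argument), and that the spaces $\ell^\infty_n = (\{1,\dots,n\}, \text{Hamming-weighted})$—or more simply, the integer grids $\{0,1,\dots,m\}^n$ with the $\ell^\infty$ metric—contain arbitrarily distorted copies of expander-type obstructions. The cleanest route is to exhibit, inside $\ell^\infty_\geq$, bi-Lipschitz copies of a sequence of finite metric spaces $(F_n)$ that are known to obstruct Hilbert coarse embedding uniformly: the canonical choice is the Hamming cubes $\{0,1\}^n$ with the $\ell^1$ (Hamming) metric, which famously do not embed into Hilbert space with uniformly bounded distortion. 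Because $\ell^\infty$ metrics on $\{0,1\}^n$ only see the all-or-nothing coordinate, one instead wants to encode a growing family inside the \emph{nonincreasing} sequences $\ell^\infty_\geq$; the trick is that nonincreasing sequences of distinct values can simulate a large "staircase" on which $\ell^\infty$ distances recover a combinatorially rich metric. Concretely, I would take, for each $n$, the set of sequences obtained by choosing a subset $A \subseteq \{1,\dots,n\}$ and placing value $2j$ in "slot $j$" if $j \in A$ and value $2j-1$ otherwise, then sorting into nonincreasing order; the resulting $\ell^\infty$ distance between two such sequences with subsets $A, B$ is $1$ whenever $A \neq B$ and the differing slot is "exposed," but with a more careful encoding (e.g. separating the blocks by large gaps $2^j$) one gets distances that recover the Hamming metric up to a constant. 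This family then inherits the non-embeddability obstruction.

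Alternatively—and I suspect this is the route the authors take, since it is cleaner—one can invoke directly that $\ell^\infty$ (equivalently $c_0$, since bounded finitely-supported sequences live in $c_0$) is a universal space for separable metric spaces up to isometry (the Fréchet/Kuratowski embedding), so that \emph{every} separable metric space isometrically embeds into $\ell^\infty$; hence $\ell^\infty_\geq$ being non-embeddable into Hilbert space would be hopeless to prove if we needed $\ell^\infty_\geq$ to be "as bad as $\ell^\infty$." Instead the point must be that $\ell^\infty_\geq$, while a thin subset, still \emph{contains} a coarsely non-embeddable family, and this is precisely where the structure of $\mathsf{PDgm}^0$ pays off: one builds the obstructing family directly at the level of birth-zero diagrams, as multisets $Y_A = \{(0, 2^j) : j \in A\} \cup \{(0, 2^j - 1): j \notin A, \, 1 \le j \le n\}$, applies Corollary \ref{cor:formula_bottleneck_degree_0} to compute $d_\bullet(Y_A, Y_B)$ explicitly, and checks that this recovers (a bi-Lipschitz copy of) the Hamming metric on $\{0,1\}^n$. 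Then one quotes the standard fact (e.g. from the theory of coarse embeddings, cf. Nowak–Yu, or the original observation that expanders / Hamming cubes obstruct such embeddings) that a metric space containing uniformly bi-Lipschitz copies of $(\{0,1\}^n, d_{\mathrm{Hamming}})$ for all $n$—with distances scaling to infinity—cannot coarsely embed into a Hilbert space. The final sentence of the theorem, the statement about the full spaces $(\mathsf{PDgm}, d_\bullet)$, is then immediate: $\mathsf{PDgm}^0$ is a (metric) subspace of $\mathsf{PDgm}$ under each of $d_B$ and $d_E = d_L$ (one must observe that the inclusion is isometric, which is essentially definitional since the rank functions and matchings restrict correctly), and a coarse embedding of the whole space would restrict to one of the subspace, a contradiction.

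\textbf{Main obstacle.} The crux is the explicit construction of the obstructing family inside $\mathsf{PDgm}^0$ and the verification, via the closed-form formula of Corollary \ref{cor:formula_bottleneck_degree_0}, that the $d_\bullet$-distances on it reproduce a genuinely Hilbert-non-embeddable combinatorial metric (Hamming cube, or more robustly an expander) with uniformly controlled bi-Lipschitz constants as $n \to \infty$. The subtlety is that $d_\bullet$ on $\mathsf{PDgm}^0$ is governed by a \emph{sorted} matching (pair $i$-th largest with $i$-th largest) and a $\min\{|d_i - d_i'|, \max\{d_i,d_i'\}/2\}$ cost per coordinate; choosing the death values so that the "truncation to the diagonal" term $\max\{d_i, d_i'\}/2$ never dominates, while still keeping the sorted order from scrambling the intended correspondence between coordinates and the combinatorial coordinates of $\{0,1\}^n$, requires a carefully chosen, rapidly-growing scale (geometric gaps work) together with a bookkeeping argument that sorting $Y_A$ and $Y_B$ aligns slot $j$ with slot $j$. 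Once that is pinned down, everything else is a citation of standard coarse geometry.
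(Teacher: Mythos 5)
Your reduction is right: Corollary~\ref{cor: Death vectorization map is bilipschitz} makes $\mathrm{DV}$ a bi-Lipschitz equivalence between $(\mathsf{PDgm}^0,d_\bullet)$ and $(\ell^\infty_\geq,\|\cdot\|_\infty)$, and coarse Hilbert embeddability is a bi-Lipschitz invariant, so the problem becomes whether $\ell^\infty_\geq$ coarsely embeds into a Hilbert space. The closing observation about restriction to the subspace $\mathsf{PDgm}^0 \subset \mathsf{PDgm}$ is also fine. But the obstruction you propose to install in $\ell^\infty_\geq$ does not actually obstruct coarse embeddings. Uniform bi-Lipschitz copies of the Hamming cubes $(\{0,1\}^n,d_{\mathrm{Hamming}})$ obstruct \emph{bi-Lipschitz} embeddings with bounded distortion (Enflo's theorem gives distortion $\asymp\sqrt{n}$), but they do \emph{not} obstruct coarse embeddings: the snowflake map $x\mapsto x$ already satisfies $\|x-y\|_2=\sqrt{d_{\mathrm{Hamming}}(x,y)}$, giving a coarse (indeed uniform) embedding of every Hamming cube into $\ell^2$ with the same control functions $\rho^\pm(t)=\sqrt{t}$; more generally $\ell^1$ coarsely embeds into $\ell^2$. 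So the family $Y_A$ you sketch would prove nothing about coarse non-embeddability, no matter how carefully you calibrate the geometric gaps. This is not a bookkeeping issue that careful scaling can fix --- it is the wrong invariant.

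The route the paper takes is the one you briefly considered and set aside. You worried that since $\ell^\infty_\geq$ is only a ``thin'' slice of $\ell^\infty$, it would be hopeless to inherit universality from $\ell^\infty$. The paper shows exactly the opposite: Proposition~\ref{prop: isometric copy of finite subsets in l infinity subspace} constructs, for any finite $A\subset(\R^d,\|\cdot\|_\infty)$ and a large constant $c$, the map $a\mapsto\bigl(2c(d+2-k)+a_k\bigr)_{k=1}^d$ (padded with zeros). The large staircase offsets $2c(d+2-k)$ force the image to be decreasing, so it lands in $\ell^\infty_\geq$, while the constant shift in each coordinate is cancelled under $\|\cdot\|_\infty$, making the map an isometry. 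Precomposing with the Kuratowski embedding of an arbitrary finite metric space into $(\R^n,\|\cdot\|_\infty)$ shows that $\ell^\infty_\geq$ contains an isometric copy of \emph{every} finite metric space. This is a genuine coarse obstruction: by Mitra--Virk~\cite[Corollary~4.7]{mitra2021space}, a metric space containing isometric copies of all finite metric spaces cannot coarsely embed into any Hilbert space (your own observation that coarse embeddability into Hilbert is finitely determined, via a compactness argument, is precisely why). That ingredient --- universality for finite metric spaces rather than a single hard family like Hamming cubes --- is the missing idea in your proposal, and without it the argument does not close.
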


To prove Theorem \ref{thm: space of persistence diagrams do not admit coarse embeddings}, we show that the image $\ell^\infty_\geq$ of the death-vectorization embedding (see Remark \ref{rmk: image of the death-vectorization map}) does not coarsely embed into a Hilbert space. This follows from Proposition \ref{prop: isometric copy of finite subsets in l infinity subspace} which implies that $\ell^\infty_\geq$ contains an isometric copy of every finite metric space. By a result of Mitra and Virk \cite[Corollary 4.7.]{mitra2021space}, the presence of such isometric copies constitutes an obstruction to coarse embeddability into a Hilbert space.

\begin{remark}
    The following proposition is similar to \cite[Lemma 9]{wagner2021nonembeddability}, where Wagner considers the space of persistence diagrams equipped with the $p$-Wasserstein distance for $p>2$. In contrast, we work with the space $\ell_\geq^\infty$, and the constant $c$ in our proof is different.
\end{remark}

\begin{prop}\label{prop: isometric copy of finite subsets in l infinity subspace}
    Let $d\in \N$. Every finite subset of $(\R^d,\|\cdot \|_{\infty})$ admits an isometric embedding into $(\ell_\geq^\infty, \| \cdot \|_{\infty})$.
\end{prop}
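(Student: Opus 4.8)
The plan is to build, given a finite subset $A = \{a^{(1)}, \dots, a^{(m)}\} \subset (\R^d, \|\cdot\|_\infty)$, an explicit isometric embedding $\Phi: A \to \ell^\infty_\geq$ by first shifting all the points into the positive orthant, and then appending a large constant coordinate so that the resulting vectors can be sorted into non-increasing order while preserving pairwise $\ell^\infty$ distances. Concretely, I would first translate: let $C > 0$ be a constant, to be chosen large relative to the diameter and coordinate-magnitudes of $A$, and replace each $a^{(j)}$ by $\tilde a^{(j)} = a^{(j)} + (C, C, \dots, C) \in \R^d$, which is a translation and hence an isometry. For $C$ large enough, every coordinate of every $\tilde a^{(j)}$ is strictly positive. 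Translation alone does not land us in $\ell^\infty_\geq$, though, because the coordinates of a fixed $\tilde a^{(j)}$ need not be sorted; the key device is to prepend (or append) a block of coordinates, all equal to some very large value $K$, which dominates everything in sight.

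Here is the construction I would carry out in detail. Fix $K$ larger than every coordinate of every $\tilde a^{(j)}$ by more than $\operatorname{diam}_\infty(A)$. Map
\[
\Phi(a^{(j)}) \coloneqq \big(\underbrace{K, K, \dots, K}_{d \text{ times, or more}},\ \sigma_j(\tilde a^{(j)}_1), \dots, \sigma_j(\tilde a^{(j)}_d),\ 0, 0, \dots\big),
\]
where $\sigma_j$ is the permutation of the $d$ entries of $\tilde a^{(j)}$ that sorts them into non-increasing order. The leading block of $K$'s guarantees the whole finite-support sequence is non-increasing provided $K$ exceeds $\max_i \tilde a^{(j)}_i$, so $\Phi(a^{(j)}) \in \ell^\infty_\geq$. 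The constant block contributes $0$ to any pairwise difference, and the trailing zeros likewise, so
\[
\|\Phi(a^{(j)}) - \Phi(a^{(k)})\|_\infty = \max_{1 \le i \le d} \big|\sigma_j(\tilde a^{(j)})_i - \sigma_k(\tilde a^{(k)})_i\big|.
\]
The main obstacle is precisely this step: showing that sorting the two vectors by possibly different permutations $\sigma_j, \sigma_k$ does not change the $\ell^\infty$ distance. This is where the size of $K$ (equivalently, the translation constant $C$) is used — but actually the relevant fact is a clean lemma about sorted sequences: for vectors $u, v \in \R^d$ with non-increasing rearrangements $u^\downarrow, v^\downarrow$, one always has $\|u^\downarrow - v^\downarrow\|_\infty \le \|u - v\|_\infty$ (rearrangement is $\ell^\infty$-nonexpansive). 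Since translation by $(C,\dots,C)$ preserves $\|\cdot\|_\infty$ exactly, we get $\|\Phi(a^{(j)}) - \Phi(a^{(k)})\|_\infty \le \|a^{(j)} - a^{(k)}\|_\infty$; the reverse inequality is the genuinely substantive direction.

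For the reverse inequality I would argue that no information is lost: I want to recover $\|a^{(j)} - a^{(k)}\|_\infty$ from the sorted pictures. The cleanest route is to observe that the map $\Phi$ can be arranged so that a single, common sorting permutation works for *all* points simultaneously — this fails in general, so instead I would use the following: pick coordinate $i_0$ realizing $\|a^{(j)} - a^{(k)}\|_\infty = |a^{(j)}_{i_0} - a^{(k)}_{i_0}|$, and track where the value $\tilde a^{(j)}_{i_0}$ lands after sorting versus where $\tilde a^{(k)}_{i_0}$ lands; a short interlacing argument (comparing ranks, using that the gap $|a^{(j)}_{i_0} - a^{(k)}_{i_0}|$ is at most the diameter, which is small compared to the spread allowed by $K$) shows some sorted coordinate position exhibits a difference at least $|a^{(j)}_{i_0} - a^{(k)}_{i_0}|$. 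In fact, the slick version: the constant $c$ alluded to in the remark preceding the proposition is chosen so that after translating by $(c, 2c, 3c, \dots, dc)$ (a *staircase* rather than a flat shift), the coordinates of each $\tilde a^{(j)}$ are \emph{already} in strictly decreasing order whenever $c$ exceeds $\operatorname{diam}_\infty(A)$ — because then $\tilde a^{(j)}_i - \tilde a^{(j)}_{i+1} = (a^{(j)}_i - a^{(j)}_{i+1}) - c < 0$ is forced, wait, one must order the staircase decreasingly, i.e., translate coordinate $i$ by $(d-i)c$, giving $\tilde a^{(j)}_i - \tilde a^{(j)}_{i+1} = (a^{(j)}_i - a^{(j)}_{i+1}) + c > 0$. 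With this staircase translation, no sorting is needed at all: every $\tilde a^{(j)}$ is automatically non-increasing, the prepended $K$'s and appended $0$'s keep us in $\ell^\infty_\geq$, the staircase translation is an isometry of $(\R^d, \|\cdot\|_\infty)$ onto its image, and $\|\Phi(a^{(j)}) - \Phi(a^{(k)})\|_\infty = \|\tilde a^{(j)} - \tilde a^{(k)}\|_\infty = \|a^{(j)} - a^{(k)}\|_\infty$ exactly. So the real content is just choosing $c > \operatorname{diam}_\infty(A)$ and $C$ large enough that all coordinates are positive; the hard part is simply realizing that the staircase shift sidesteps the sorting issue entirely, and then verifying the inequalities $c > \operatorname{diam}$ indeed force monotonicity.
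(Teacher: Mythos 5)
Your final ``staircase shift'' idea is exactly the mechanism the paper uses: translate the $k$-th coordinate by an amount that decreases linearly in $k$, so that every shifted vector $\tilde a^{(j)}$ is \emph{already} non-increasing, no sorting permutation is ever applied, and in any pairwise difference the shift cancels coordinate-by-coordinate, giving an exact isometry. (The paper packages this as the single map $a^i \mapsto (2c(d+2-k) + a^i_k)_{k=1}^d$; that choice simultaneously forces positivity of every entry, so the paper needs neither your separate flat shift $C$ nor the prepended block of $K$'s --- once a vector is positive and non-increasing, appending trailing zeros already places it in $\ell^\infty_\geq$. Your lengthy preliminary detour through flat shifts, sorting permutations, and the rearrangement inequality is discarded anyway by your own final version, and can simply be deleted.)

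The genuine gap is your claim that $c > \operatorname{diam}_\infty(A)$ forces monotonicity. What you actually need is $\tilde a^{(j)}_k - \tilde a^{(j)}_{k+1} = c + \bigl(a^{(j)}_k - a^{(j)}_{k+1}\bigr) \geq 0$, so $c$ must dominate the spread of coordinates \emph{within} each individual vector, namely $c > \max_j \max_{k,k'} \bigl|a^{(j)}_k - a^{(j)}_{k'}\bigr|$. This quantity has nothing to do with $\operatorname{diam}_\infty(A)$, which measures distances \emph{between distinct points} of $A$: if $A$ is the single point $(0,100) \in \R^2$, then $\operatorname{diam}_\infty(A) = 0$, yet monotonicity forces $c \geq 100$. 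The paper avoids this by taking $c > \max\{\|a^i\|_\infty,\ |a^i_k - a^j_{k'}|\}$ with the maximum ranging over all index pairs \emph{including $i=j$}, which is precisely the within-vector spread condition. Fix the lower bound on $c$ accordingly (and optionally strengthen it to also guarantee positivity, making $C$ and the $K$-block redundant), and your argument goes through.
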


\begin{proof}
    Let $A = \{a^1,a^2,\dots,a^n\}$ be a finite subset of $(\R^d,\|\cdot \|_{\infty})$. Choose a constant $c>0$ such that
    \[
    c>\max \{\|a^i\|_{\infty},|a^i_k-a^j_{k'}|:1\leq i,j\leq n, 1\leq k,k'\leq d \}
    \]
    Define the map
    \[
        \varphi:A\to \ell^\infty_\geq,\qquad
         a^i \mapsto (2c(d+2-k) + a^i_k)_{k=1}^d
    \]
    Here, we abuse notation and write $(2c(d+2-k) + a^i_k)_{k=1}^d$ to denote the sequence $(2c(d+2-k) + a^i_k)_{k=1}^d$ extended by appending infinitely many zeros. We first claim that $\varphi(a^i) \in \ell^\infty_\geq$ for each $a^i$. Observe that $a^i_k + 2c \geq a^i_{k+1}$ for each $k$ because
    \[
    a^i_k + 2c = a^i_k + c + c > 0 + c = c> a^i_{k+1}
    \]
    and it follows that $\varphi(a^i) \in \ell^\infty_\geq$ since
    \[
    \varphi(a^i) \in \ell^\infty_\geq \iff 2c(d+2-k) + a^i_k \geq 2c(d+2 - (k+1)) + a^i_{k+1} \iff a^i_k + 2c \geq a^i_{k+1}
    \]
    Now we show that $\varphi$ is an isometry. Let $a^i$ and $a^j$ be arbitrary in $A$. Then
    \begin{align*}
    \| \varphi(a^i) - \varphi(a^j) \|_\infty &= \max_{1\leq k \leq d} \left|2c(d+2 -k) + a^i_k - 2c(d+2-k) - a^j_k\right| \\
    &= \max_{1\leq k \leq d} \left|a^i_k - a^j_k\right| = \| a^i - a^j \|_{\infty},
    \end{align*}
    which concludes the proof.
\end{proof}

\begin{cor} \label{cor: ell infty subspace does not CE}
    The space $(\ell^\infty_\geq, \| \cdot \|_{\infty})$ does not admit a coarse embedding into a Hilbert space.
\end{cor}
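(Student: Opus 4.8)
The plan is to deduce Corollary \ref{cor: ell infty subspace does not CE} directly from Proposition \ref{prop: isometric copy of finite subsets in l infinity subspace} by invoking a known obstruction to coarse embeddability into Hilbert space, namely the result of Mitra and Virk \cite[Corollary 4.7]{mitra2021space}. The key observation linking the two is that, by Proposition \ref{prop: isometric copy of finite subsets in l infinity subspace}, the space $(\ell^\infty_\geq, \|\cdot\|_\infty)$ contains an isometric copy of every finite subset of every $(\R^d, \|\cdot\|_\infty)$, for every $d \in \N$. In particular, it contains an isometric copy of $\{0,1\}^d$ equipped with the Hamming-type metric induced by $\|\cdot\|_\infty$, and more generally of scaled cubes $\{0, m\}^d$; taking unions over $d$, the space $\ell^\infty_\geq$ is ``universal'' for finite metric spaces in the sense required by the Mitra--Virk criterion.

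First I would recall the precise statement of the obstruction: a metric space which contains isometric copies of a suitable ``expander-like'' or ``universal'' family of finite metric spaces cannot coarsely embed into a Hilbert space, since coarse embeddability is inherited by subspaces (with the same control functions $\rho^-, \rho^+$), and the finite metric spaces in question are known not to coarsely embed into Hilbert space with uniform control. Concretely, it is classical (going back to Enflo, and exploited in the expander-based obstructions) that finite metric spaces arising from hypercubes or from expander graphs cannot be embedded into Hilbert space with distortion bounded independently of their size; hence a single metric space containing all of them isometrically inherits this failure. Then I would simply cite \cite[Corollary 4.7]{mitra2021space}, which packages exactly this: the presence of isometric copies of an appropriate infinite family of finite metric spaces obstructs coarse embedding into a Hilbert space.

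The proof body is therefore short: by Proposition \ref{prop: isometric copy of finite subsets in l infinity subspace}, $(\ell^\infty_\geq,\|\cdot\|_\infty)$ isometrically contains every finite subset of every $(\R^d, \|\cdot\|_\infty)$; applying \cite[Corollary 4.7]{mitra2021space} (which asserts that no metric space with this universality property admits a coarse embedding into a Hilbert space), we conclude that $(\ell^\infty_\geq, \|\cdot\|_\infty)$ does not admit such an embedding. One should double-check that the hypotheses of the cited corollary match what Proposition \ref{prop: isometric copy of finite subsets in l infinity subspace} provides --- in particular whether it requires all finite metric spaces, or merely finite subsets of $\ell^\infty$-spaces (which by Fréchet embedding amounts to the same thing, since every finite metric space embeds isometrically into some $(\R^d, \|\cdot\|_\infty)$). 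That reconciliation of hypotheses is the only nontrivial point.

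The main obstacle I anticipate is precisely that bookkeeping step: ensuring that the ``universality for finite subsets of $(\R^d,\|\cdot\|_\infty)$'' supplied by Proposition \ref{prop: isometric copy of finite subsets in l infinity subspace} is literally strong enough to trigger \cite[Corollary 4.7]{mitra2021space}. Since every finite metric space admits an isometric embedding into some $(\R^d, \|\cdot\|_\infty)$ via the Fréchet embedding $x \mapsto (d(x, x_i))_i$, the two formulations coincide, and no expander machinery needs to be reproduced here --- it is all absorbed into the cited result. Thus the corollary follows immediately once this identification is made explicit.

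\begin{proof}
    By Proposition \ref{prop: isometric copy of finite subsets in l infinity subspace}, for every $d \in \N$, every finite subset of $(\R^d, \|\cdot\|_\infty)$ embeds isometrically into $(\ell^\infty_\geq, \|\cdot\|_\infty)$. Since every finite metric space embeds isometrically into some $(\R^d, \|\cdot\|_\infty)$ via the Fréchet embedding, it follows that $(\ell^\infty_\geq, \|\cdot\|_\infty)$ contains an isometric copy of every finite metric space. By \cite[Corollary 4.7]{mitra2021space}, a metric space with this property does not admit a coarse embedding into a Hilbert space. Hence $(\ell^\infty_\geq, \|\cdot\|_\infty)$ does not admit a coarse embedding into a Hilbert space.
\end{proof}
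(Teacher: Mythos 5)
Your proof is correct and follows essentially the same route as the paper: invoke Proposition \ref{prop: isometric copy of finite subsets in l infinity subspace}, upgrade ``finite subsets of $(\R^d,\|\cdot\|_\infty)$'' to ``all finite metric spaces'' via the embedding $x \mapsto (d(x,x_i))_i$ (you call it the Fr\'echet embedding, the paper calls it the Kuratowski embedding --- same map), and then conclude with \cite[Corollary 4.7]{mitra2021space}. No substantive difference.
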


\begin{proof}
    We begin by noting that every finite metric space $(X =\{x_1,x_2,\dots,x_d\},d_X)$ isometrically embeds into a finite subset of $(\R^d,\|\cdot\|_\infty)$ via the map $x \mapsto (d_X(x,x_i))_{i=1}^d$---this is known as the \emph{Kuratowski embedding} \cite{kuratowski1935quelques} (see also \cite[Remark 1.7]{heinonen2003geometric}). Thus, Proposition \ref{prop: isometric copy of finite subsets in l infinity subspace} implies that $(\ell^\infty_\geq, \| \cdot \|_{\infty})$ contains an isometric copy of every finite metric space. By \cite[Corollary 4.7]{mitra2021space}, this implies that $(\ell^\infty_\geq, \| \cdot \|_{\infty})$ does not coarsely embed into any Hilbert space.
\end{proof}

\begin{proof}[Proof of Theorem \ref{thm: space of persistence diagrams do not admit coarse embeddings}]
    By Corollary \ref{cor: ell infty subspace does not CE}, the space $(\ell^\infty_\geq,\|\cdot\|_\infty)$ does not admit a coarse embedding into a Hilbert space. As the spaces $(\mathsf{PDgm}^0,d_B)$ and $(\ell^\infty_\geq,\|\cdot\|_\infty)$ are bi-Lipschitz equivalent via the death-vectorization map (Corollary \ref{cor: Death vectorization map is bilipschitz}), $(\mathsf{PDgm}^0,d_B)$ also fails to admit such an embedding. Since the bottleneck and erosion distances are equivalent on $\mathsf{PDgm}^0$ (Theorem \ref{thm: d_B = d_E on PDgm^0}), the same conclusion holds for $(\mathsf{PDgm}^0,d_E)$. Consequently, the full spaces $(\mathsf{PDgm},d_B)$ and $(\mathsf{PDgm}^0,d_E)$ also do not admit coarse embeddings into a Hilbert space. 
\end{proof}

\section*{Acknowledgements} We would like to thank Parker Edwards, Woojin Kim and Facundo M\'{e}moli for useful conversations and feedback which helped us to improve the paper. This research was partially supported by NSF grant DMS--2324962.

\bibliographystyle{plain} 
\bibliography{ReferencesMainV2} 

\end{document}